\newtheorem{theorem}{Theorem}[section]
\newtheorem{lemma}[theorem]{Lemma}
\newtheorem{corollary}[theorem]{Corollary}
\newtheorem{observation}[theorem]{Observation}
\newtheorem{conjecture}[theorem]{Conjecture}
\newtheorem{claim}[theorem]{Claim}
\newtheorem{fact}[theorem]{Fact}
\theoremstyle{definition}
\theoremstyle{remark}
\newtheorem{remark}[theorem]{Remark}
\newcommand\N{\mathbb{N}}
\newcommand\R{\mathbb{R}}
\newcommand\Z{\mathbb{Z}}
\newcommand\eps{\varepsilon}
\renewcommand{\leq}{\leqslant}
\renewcommand{\geq}{\geqslant}
\renewcommand{\to}{\rightarrow}
\renewcommand{\Re}{\re}
\def\eps{\varepsilon}
	\def\p{\partial}
	\def\E{\mathbb{E}}
	\def\C{\mathbb{C}}
	\def\R{\mathbb{R}}
	\def\bH{\mathbb{H}}	
	\def\Z{\mathbb{Z}}
	\def\N{\mathbb{N}}
	\def\PP{\mathbb{P}}
	\def\l{\lambda}
	\def\k{\kappa}
	\def\s{\sigma}
	\def\t{\theta}
	\def\a{\alpha}
	\def\g{\gamma}
	\def\z{\zeta}	
	\def\zbar{\bar{z}}
	\def\Var{\mathrm{Var}}
	\def\im{\mathrm{Im}}
	\def\re{\mathrm{Re}}
	\def\NN{\mathcal{N}}
	\def\EE{\mathbb{E}}
	\def\vp{\varphi}
\begin{document}

\title{Central limit theorems and the geometry of polynomials}
\author{Marcus Michelen}
\address{Department of Mathematics, Statistics, and Computer Science. University of Illinois at Chicago.  Chicago, IL 60607, USA.  }
\email{michelen.math@gmail.com}
\author{Julian Sahasrabudhe}
\address{Department of Pure Mathematics and Mathematical Statistics, Wilberforce Road, University of Cambridge. Cambridge, CB3 0WA, UK.}
\email{jdrs2@cam.ac.uk}

\begin{abstract}
Let $X \in \{0,\ldots,n \}$ be a random variable, with mean $\mu$ and standard deviation $\s$ and let 
\[f_X(z) = \sum_{k} \PP(X = k) z^k, 
\] be its probability generating function. Pemantle conjectured that if $\s$ is large and $f_X$ has no roots close to $1\in \C$ then 
$X$ must be approximately normal. We completely resolve this conjecture in the following strong quantitative form, obtaining sharp bounds.
If $\delta = \min_{\z}|\z-1|$ over the complex roots $\z$ of $f_X$, and $X^{\ast} := (X-\mu)/\s$,
then 
\[ \sup_{t \in \R} \left|\PP(X^{\ast} \leq t) - \PP( Z \leq t) \, \right|  = O\left(\frac{\log n}{\delta\s} \right)
\] where $Z \sim \NN(0,1)$ is a standard normal. 
This gives the best possible version of a result of Lebowitz, Pittel, Ruelle and Speer.
We also show that if $f_X$ has no roots with small \emph{argument}, then $X$ must be approximately normal, again in a sharp quantitative form: if we set $\delta = \min_{\z}|\arg(\z)|$ then 
\[ \sup_{t \in \R} \left|\PP(X^{\ast} \leq t) - \PP( Z \leq t) \, \right|  = O\left(\frac{1}{\delta\s} \right).
\] Using this result, we answer a question of Ghosh, Liggett and Pemantle by proving a sharp multivariate central limit theorem for 
random variables with real-stable probability generating functions.
\end{abstract}

\maketitle

\section{Introduction}
In his influential paper on negative dependence, Pemantle \cite{pemantle2000} set out a list of desirable combinatorial properties for ``the correct'' definition of negatively dependent random variables and laid out a number of natural conjectures. In their celebrated paper, Borcea, Br\"{a}nd\'{e}n and Liggett \cite{BBL} provided such a definition by making a striking connection with the blossoming subject of real-stable polynomials; it turns out that the definition that Pemantle sought is best described in terms of the zeros of the associated probability generating function. For this, let $X \in \{0,\ldots,n\}^d$ be a random variable\footnote{Throughout the paper we will slightly abuse notation and write $X \in S$, for a random variable $X$ and a set $S$, as shorthand for ``$X$ takes values in the set $S$.''}, let 
\[ f_X(z_1,\ldots,z_d) := \sum_{(i_1,\ldots,i_d) } \PP( X = (i_1,\ldots,i_d))z^{i_1}\cdots z^{i_d},\] 
 be its probability generating function and define $$\bH  = \{ (z_1,\ldots,z_d) \in \C^d : \im(z_i) > 0, \textit{ for all } i \}$$ to be the \emph{upper half-plane}. 
A polynomial $f \in \R[z_1,\ldots,z_d]$ is said to be \emph{real-stable} if it has no roots $\z$ in the upper half-plane $\bH$ and a random variable $X$ is said to be \emph{strong Rayleigh} if its probability generating function $f_X$ is real-stable.
Borcea, Br\"{a}nd\'en and Liggett showed that strong Rayleigh random variables admit a natural theory of negative dependence and provided many 
natural examples of strong Rayleigh distributions: spanning tree distributions, uniform random matching distributions in graphs and determinantal measures. In the years since this notion has been well studied and many further examples have been found \cite{pemantle-survey,pemantle-peres,liggettstability,sample-Strong-R,kyng2018,OG-TSP,random-Geo,li2017polynomial}.

In addition to the connection with negative dependence, the theory of real-stable polynomials has had many recent successes,
notably Borcea and Br\"{a}nd\'{e}n's \cite{BorBranI,BorBranII} powerful classification of linear operators that preserve real stability; its role in Marcus, Spielman, and Srivastava's spectacular proof of the Kadison-Singer conjecture \cite{InterlacingII}; and in Gurvits's surprising and simple proof of (and extensions of) the van der Waerden conjecture \cite{Gurvits,Gurvits08}; among others \cite{InterlacingI,AOSS18,kyng2019}.

In this paper, one of our main motivations is to finish a program set in motion by Ghosh, Liggett and Pemantle \cite{GLP} to show that if $X_n \in \{0,\ldots,n\}^d$ 
is a sequence of random variables, with real stable generating functions, then $X_n$ tends to a multivariate Gaussian distribution, after centering and scaling, provided $\s_n \rightarrow \infty$. We will derive this theorem by first proving results on univariate polynomials, with much looser restrictions on the roots, and then ``lifting'' these results to the multivariate setting. 

In the univariate setting, work on the connection between roots of polynomials and their coefficients reaches back (at least) to Cauchy's quantitative work on the fundamental theorem of algebra \cite{cauchy1828exercises}, but was perhaps first intensely studied by Littlewood and Offord \cite{LittlewoodOfford,LittlewoodOfford-II,LittlewoodOfford-III}, Szeg\H{o} \cite{Szego}, Bloch and P\'{o}yla \cite{BlochPolya} and Schur \cite{Schur} among others (see \cite{clt1} for more discussion). 
To give a bit of flavor of these results, we mention only one such result from this vast literature that is most relevant for us here. In 1950, Erd\H{o}s and Tur\'{a}n \cite{erdos-turan} proved that if $P(z) = \sum_{k=1}^n a_kz^k$ is a polynomial ($a_0a_n\not=0$) with sufficiently ``flat'' coefficients, meaning $(|a_0||a_n|)^{-1/2}\sum_{k=1}^n |a_k| = e^{o(n)}$, then the roots of $P$ are approximately radially-equidistributed in the complex plane, meaning that each sector $\{z : \alpha \leq \arg(z)\leq \beta \}$, for $0 \leq \alpha <\beta \leq 2\pi$ contains roughly $n(\beta-\alpha)/2\pi$ roots. This result has been adapted to different settings \cite{erdos-polys} and generalized and sharpened several times \cite{Amoroso,Mignotte,Bilu}. For more details, we refer the reader to the lovely articles of Granville \cite{Granville} and Soundararajan \cite{Sound}.

In this paper, we show that a substantial amount of information about the coefficients of a polynomial can be derived from its locus of zeros, if we additionally assume the polynomial is a probability generating function, which is to say, it has non-negative coefficients. 
A surprising first step in this direction is due to Lebowitz, Pittel, Ruelle and Speer \cite{LPRS}, who showed that if, for each $n\geq 1$, 
$X_n \in \{0,\ldots,n\}$ is a random variable for which $f_{X_n}$ has no zeros in a neighborhood of $1 \in \C$, and $\s_n n^{-1/3} \rightarrow \infty$ then $(X_n-\mu_n)\s_n^{-1}$ tends weakly to a normal distribution (see also the 1979 work of Iagolnitzer and Souillard in the context of the Ising model \cite{ising1979}). Inspired by this advance, Pemantle \cite{clt1,pemantle}, was lead to conjecture that the variance condition in the theorem of Lebowitz, Pittel, Ruelle and Speer could be greatly improved.

\begin{conjecture}[\hspace{1sp}\cite{pemantle}] For $\delta >0$ and each $n \geq 1$, let $X_n \in \{0,\ldots,n\}$ be a random variable with mean $\mu_n$, standard deviation $\s_n$ and for which the the roots $\z$ of the probability generating function $f_{X_n}$ satisfy $|\z-1| \geq \delta$. Then $(X_n-\mu_n)\s_n^{-1} \rightarrow N(0,1)$, provided $\s_n \rightarrow \infty$.
\end{conjecture}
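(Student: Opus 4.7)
My plan is to attack the conjecture via Berry--Esseen smoothing applied to the characteristic function of $X_n^* := (X_n-\mu_n)/\sigma_n$, exploiting the factorization
\[ f_{X_n}(z) = \prod_{j=1}^{d_n} \bigl(1 + \eta_j(z-1)\bigr), \qquad \eta_j := 1/(1-\zeta_j) \]
(valid since $f_{X_n}(1)=1$), under which the root hypothesis $|\zeta_j-1|\geq \delta$ becomes the uniform bound $|\eta_j|\leq 1/\delta$ on each factor. Differentiating $\log f_{X_n}$ at $z=1$ yields the first two power sum identities $\sum_j \eta_j = \mu_n$ and $\sum_j \eta_j^2 = \mu_n-\sigma_n^2$, and more generally a Stirling number identity expresses the cumulant $\kappa_m(X_n)$ as an explicit integer linear combination of the power sums $S_k := \sum_j \eta_j^k$ for $k\leq m$.

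For the analytic step, I would compare
\[ \phi_n(t) = e^{-it\mu_n/\sigma_n}\prod_j (1+\eta_j(e^{it/\sigma_n}-1)) \]
to $e^{-t^2/2}$ on the range $|t|\leq c\delta\sigma_n$ by taking logarithms and expanding each factor as a convergent power series in $v := e^{it/\sigma_n}-1$. The leading $k=1,2$ contributions, using the two identities above, collapse exactly to $-t^2/2$; the remainder is a power series whose coefficients are the $S_k$ for $k\geq 3$. The direct bound $|S_k|\leq d_n/\delta^k\leq n/\delta^k$ recovers only the Lebowitz--Pittel--Ruelle--Speer window $\sigma_n\gg n^{1/3}$, while the improvement needed for the conjecture requires proving $|S_k|\lesssim_{k,\delta}\sigma_n^2\cdot\log n$ (or better), a bound that inherently demands cancellation across the $d_n$ complex summands $\eta_j^k$.

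Obtaining this cancellation is the central obstacle, and it is where non-negativity of the coefficients of $f_{X_n}$ must enter decisively: the naive Cauchy contour integral of $\log f_{X_n}$ around $z=1$ only reproduces the trivial bound $O(n/\delta^k)$ on $S_k$, since $|\log f_{X_n}(z)|$ can be as large as $O(d_n)$ on a contour of radius $\delta/2$. My plan to circumvent this is to combine such a local integral with (i) the ``PGF upper bound'' $|f_{X_n}(z)|\leq f_{X_n}(|z|)$, valid for all $z\in\C$ by non-negativity, (ii) a global decay estimate $|f_{X_n}(e^{is})|\leq e^{-c\sigma_n^2 s^2}$ on a range of $s$ away from $0$, bootstrapped from the local Taylor expansion by a self-improvement argument together with a Bernoulli-sum comparison in the real-rooted case, and (iii) the Möbius-like bijection $\zeta\mapsto \eta=1/(1-\zeta)$ that maps the root-free disc $|z-1|<\delta$ to a bounded region and linearizes the constraints. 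Given the resulting cumulant estimate, I finally apply the Berry--Esseen smoothing inequality with cutoff $T\asymp \delta\sigma_n/\log n$: the contribution from $|t|\leq T$ is controlled by the expansion above, while the tail $T\leq |t|\leq \pi\sigma_n$ is handled by the bound $|\phi_n(t)|=|f_{X_n}(e^{it/\sigma_n})|<1$ sharpened via the no-roots-near-1 hypothesis. This yields the Kolmogorov distance bound $O(\log n/(\delta\sigma_n))$, which implies the conjecture for $\sigma_n\gg\log n$; the remaining regime $\sigma_n\to\infty$ slower than $\log n$ is handled by the cumulant bound $|\kappa_m(X_n^*)|\to 0$ directly, invoking the moment method to conclude weak convergence.
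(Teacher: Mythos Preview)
The conjecture you are attempting to prove is \emph{false}. As the paper states immediately after the conjecture, the authors refuted it in earlier work: for every $C>0$ there exist random variables $X_n\in\{0,\ldots,n\}$ with $\sigma_n>C\log n$ whose probability generating functions are zero-free in a fixed ball $B(1,\delta)$ and yet $(X_n-\mu_n)/\sigma_n$ does \emph{not} converge to $N(0,1)$. Consequently no proof of the conjecture can exist, and your proposal must contain an error.

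The error is localized in your final sentence. Your Berry--Esseen argument, granting the cancellation bound $|S_k|\lesssim_{k,\delta}\sigma_n^2\log n$, would indeed yield a Kolmogorov bound of order $\log n/(\delta\sigma_n)$ --- and this is exactly the paper's Theorem~1.2, which is the sharp positive result. But that bound only gives convergence when $\sigma_n/\log n\to\infty$. You then claim that the ``remaining regime $\sigma_n\to\infty$ slower than $\log n$'' can be handled by the moment method via $|\kappa_m(X_n^\ast)|\to 0$. This step fails: your own power-sum estimate gives at best $|\kappa_m(X_n)|\lesssim_{m,\delta}\sigma_n^2(\log n)^{m-2}$, hence $|\kappa_m(X_n^\ast)|\lesssim (\log n/\sigma_n)^{m-2}$, which does \emph{not} tend to zero when $\sigma_n=O(\log n)$. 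The counterexamples live precisely in this window, so no sharpening of your cumulant bound can close the gap --- the bound $|S_k|=O(\sigma_n^2)$ (without the $\log n$) is simply false in general.

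As a secondary remark, even for the correct regime $\sigma_n\gg\log n$ your plan for the ``central obstacle'' --- the cancellation estimate on $S_k$ --- remains a sketch rather than an argument. The paper obtains the equivalent cumulant control by an entirely different route: it studies the logarithmic potential $u_X=\log|f_X|$ as a harmonic function on the zero-free region, introduces a monotonicity notion (``$b$-decreasing'') established via Brownian-motion exit estimates in thin sectors (this is where the $\log n$ appears, through the exponential decay rate $\exp(-\delta^{-1}\log R)$ of harmonic measure), and then uses Harnack-type comparisons and a Marcinkiewicz-style argument to force the second cumulant to dominate all higher ones. Your factorization/power-sum framework is a legitimate alternative starting point, but the non-negativity input you list (the bound $|f_X(z)|\le f_X(|z|)$ and a bootstrapped decay of $|f_X(e^{is})|$) does not obviously yield the needed cancellation in $\sum_j\eta_j^k$; the paper's harmonic-function machinery is doing real work here that your outline does not replace.
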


In recent work, the authors \cite{clt1} refuted this conjecture by showing that for any $C>0$ there exist random variables $X_n \in \{0,\ldots,n\}$
with $\s_n > C\log n$ that \emph{are not} asymptotically normal and for which $f_{X_n}$ has no roots in a neighborhood of $1 \in \C$. On the other hand, the authors also showed that Pemantle was right to suspect that the variance condition in the work of Lebowitz, Pittel, Ruelle and Speer could be significantly improved, by showing that it is sufficient to assume $\s_n > n^{\eps}$, for any $\eps >0$. 

Here, we completely resolve the conjecture of Pemantle, by showing $\s_n(\log n)^{-1} \rightarrow \infty$ is sufficient to guarantee convergence to a normal distribution. In fact, we prove a sharp quantitative version of this theorem that gives an optimal bound on the maximum discrepancy between a random variable $X$ and a standard normal, based only on the distance of the closest root of $f_X$ to $1 \in \C$.

\begin{theorem} \label{thm:logn}
Let $X \in \{0,\ldots,n\}$ be a random variable with mean $\mu$, standard deviation $\s$ and probability generating function $f_X$ and set $X^{\ast} = (X-\mu)\s^{-1}$.
If $\delta \in (0,1)$ is such that $|1-\z| \geq \delta $ for all roots $\z$ of $f_X$ then\footnote{The implicit constant can be taken to be $2^{3261}$.} 
\begin{equation}\label{equ:lognCloseToNorm} \sup_{t \in \R}| \PP(X^{\ast} \leq t) - \PP(Z \leq t) | =  O\left(\frac{\log n}{\delta \s}\right)\,,
 \end{equation} where $Z \sim N(0,1)$.
\end{theorem}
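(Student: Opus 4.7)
The plan is to apply the Berry--Esseen smoothing inequality in Fourier form,
\[
\sup_t \bigl|\PP(X^* \leq t) - \PP(Z \leq t)\bigr| \lesssim \int_{-T}^T \frac{|\phi_{X^*}(t) - e^{-t^2/2}|}{|t|}\, dt + \frac{1}{T},
\]
choosing $T \asymp \delta\sigma/\log n$ so that the $1/T$ term already matches the target rate. Here $\phi_{X^*}(t) = e^{-i\mu t/\sigma}\, f_X(e^{it/\sigma})$; writing $\theta = t/\sigma$, we have $|\theta| \leq c\delta/\log n$ throughout the integration range, and the task reduces to bounding the integral by $O(\log n/(\delta\sigma))$ as well.

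Using the factorization $f_X(z) = \prod_j (z-\zeta_j)/(1-\zeta_j)$, I rewrite
\[
\log f_X(e^{i\theta}) = -\sum_{k \geq 1} \frac{S_k}{k}\,(e^{i\theta}-1)^k, \qquad S_k := \sum_j (\zeta_j - 1)^{-k},
\]
noting that each summand is small because $|(e^{i\theta}-1)/(\zeta_j-1)| \leq |\theta|/\delta \leq 1/\log n$. A direct computation from $f_X'(1) = \mu$ and $f_X''(1) = \sigma^2 + \mu^2 - \mu$ gives $S_1 = -\mu$ and $S_2 = \mu - \sigma^2$, so combining the $k = 1, 2$ terms with $-i\mu\theta$ produces precisely $-t^2/2$ at leading order; the residual is $E(t) := \log \phi_{X^*}(t) + t^2/2$.

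I would then split the residual series at $k = K \asymp \log n$. For the tail, the crude bound $|S_k| \leq n/\delta^k$ gives a total contribution $\lesssim n(\log n)^{-K} \leq n^{-C}$ once $K$ is a large enough multiple of $\log n$, so the tail is negligible. The head $3 \leq k \leq K$ is the crux: a straightforward triangle inequality with $|S_k| \leq n/\delta^k$ is weaker than needed by a factor of roughly $n/\sigma^2$. The proof must sharpen this to essentially $|S_k| \lesssim \sigma^2/\delta^{k-2}$ (up to polylog losses) by exploiting the fact that $f_X$ has nonnegative coefficients; the identity $S_2 = \mu - \sigma^2$ provides a natural anchor, and the structural constraints on the multiset of roots (via pairing complex conjugates and controlling the real parts of $S_k$) should propagate to higher $k$. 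Granting this, the head gives $|E(t)| \lesssim (\log n/\delta)(|t|^3/\sigma)$, so
\[
\frac{|\phi_{X^*}(t) - e^{-t^2/2}|}{|t|} \lesssim e^{-t^2/2}\,\frac{|E(t)|}{|t|} \lesssim \frac{\log n}{\delta\sigma}\,|t|^2\, e^{-t^2/2},
\]
whose integral over $\R$ is $O(\log n/(\delta\sigma))$, as required.

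The main obstacle is precisely the sharpened head estimate: improving $|S_k| \leq n/\delta^k$ to $|S_k| \lesssim \sigma^2/\delta^{k-2}$ in the regime $3 \leq k \leq \log n$. This $n/\sigma^2$ gain is exactly what distinguishes the present result from \cite{clt1} (which needed $\sigma > n^\eps$), and is expected to come from a genuinely structural argument exploiting nonnegativity of the PGF coefficients---for instance, a Newton-type relation among partial power sums of the shifted roots, or an $L^2$ identity on a small circle around $z = 1$ that leverages the Parseval identity for $(\PP(X=k))_k$. An auxiliary check is also needed near $|t| \approx T$, where one must verify independently that $|\phi_{X^*}(t)|$ decays (rather than relying solely on the Taylor comparison), but this decay follows from the same root expansion carried beyond cubic order.
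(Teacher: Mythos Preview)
Your overall shape is correct and matches the paper: Esseen smoothing, expand $\log\phi_{X^*}$, bound the remainder, with $T\asymp\delta\sigma/\log n$. You have also correctly isolated the crux --- replacing the trivial $|S_k|\le n/\delta^k$ by something like $\sigma^2/\delta^{k-2}$ --- and you say so openly. But that estimate \emph{is} the theorem: the paper spends Sections~\ref{sec:b-decreasing}--\ref{sec:technical-proof} proving its analogue for the normalized cumulants $a_j$, and the mechanism is not the one you propose. Newton relations among the $S_k$ give nothing here, and the Parseval step that actually appears is applied to the harmonic function $U_0(w)=\log|f_X(e^w)|-\mu\,\Re w$ on a small circle, not to the coefficient sequence of $f_X$. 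The real argument is potential-theoretic: one inscribes a truncated sector of angular width $\eps\asymp\delta/\log n$ inside the zero-free ball $B(1,\delta)$ and shows, via a harmonic-measure estimate (Lemma~\ref{lem:b-decreasing}), that $u=\log|f_X|$ is ``$1$-decreasing'' there; the exponential rate $(r/R)^{1/\eps}$ for the harmonic measure of the short ends of a thin sector is exactly where the $\log n$ enters. This $b$-decreasing property is then fed through a Harnack comparison (Lemma~\ref{lem:money}), a Parseval bound on $U_0$ (Lemma~\ref{lem:CumulantDecay}), and an iterative De~Angelis/Bergweiler--Eremenko--Sokal argument (Lemma~\ref{lem:BES}) that uses weak positivity $u(|z|)\ge u(z)$ to force the variance term $|a_2|\asymp\sigma^2$ to dominate every $|a_j|$. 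The outcome is $|a_j|\lesssim\sigma^2(\log n/\delta)^{\,j-2}$ --- note the $\log n$ sits in the \emph{base}, not as a multiplicative polylog --- and this is what yields $|R(\xi)|\lesssim|\xi|^3(\log n)/(\delta\sigma)$ on $|\xi|\lesssim\delta\sigma/\log n$.

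There is also a smaller bookkeeping gap in what you wrote. Expanding in powers of $e^{i\theta}-1$ rather than $i\theta$ means the $k=1,2$ terms combined with $-i\mu\theta$ match $-t^2/2$ only to leading order; the cubic remainder from those two terms is of size $\asymp\mu|\theta|^3=\mu|t|^3/\sigma^3$, and $\mu$ can be as large as $n$, which is not bounded by $\sigma^2\log n/\delta$ under the hypotheses. The paper avoids this by expanding $\log f_X(e^w)$ in powers of $w$ (the cumulant series), for which the linear term is \emph{exactly} $\mu w$ and cancels cleanly against $-i\mu\theta$, leaving no $\mu$-dependent residue. This is easy to repair by switching expansion variables, but as written it is a second hole.
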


We note that this immediately implies the following limit theorem for distributions with no roots close to $1 \in \C$.

\begin{corollary} \label{cor:ball-CLT}
For each $n\geq 1$, let $\delta_n \in (0,1)$ and $X_n \in \{0,\ldots,n\}$ be a random variable with mean $\mu_n$, standard deviation $\s_n$ and probability generating function $f_n$.
If $|\z-1| \geq \delta_n$ for all roots $\z$ of $f_n$ and 
\[ \s_n \delta_n (\log n)^{-1} \rightarrow \infty 
\] then $(X_n -\mu_n)\s_n^{-1} \rightarrow N(0,1)$ in distribution. 
\end{corollary}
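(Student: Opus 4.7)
The plan is to derive Corollary \ref{cor:ball-CLT} immediately from the quantitative estimate of Theorem \ref{thm:logn}, together with the standard fact that uniform convergence of distribution functions to a continuous limit is equivalent to weak convergence. Since the quantitative theorem has already been stated, almost no new work is required.

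First, for each $n$ I would apply Theorem \ref{thm:logn} to $X_n$ with parameter $\delta = \delta_n$; this is legitimate since $\delta_n \in (0,1)$ and $|\z - 1| \geq \delta_n$ for every root $\z$ of $f_n$ by hypothesis. Writing $X_n^\ast = (X_n - \mu_n)\s_n^{-1}$, the theorem yields an absolute constant $C > 0$ with
\[ \sup_{t \in \R}\bigl| \PP(X_n^\ast \leq t) - \PP(Z \leq t)\bigr| \;\leq\; C\,\frac{\log n}{\delta_n \, \s_n}, \]
where $Z \sim N(0,1)$. The hypothesis $\s_n \delta_n (\log n)^{-1} \to \infty$ is precisely the condition that drives the right-hand side to $0$ as $n \to \infty$, so the Kolmogorov distance between the law of $X_n^\ast$ and the standard normal law tends to $0$.

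Second, I would convert this to weak convergence: because the CDF of $N(0,1)$ is continuous on all of $\R$, every real number is a continuity point of the limit, and pointwise (indeed uniform) convergence of CDFs at every continuity point is the definition of convergence in distribution. Thus $X_n^\ast \to N(0,1)$ in distribution.

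There is no substantive obstacle. The only items one must check are that the implicit constant in the $O(\cdot)$ of Theorem \ref{thm:logn} is genuinely absolute (it is, per the theorem's statement), and that the hypotheses of the theorem transfer to every $X_n$ in the sequence (they do, directly from the hypotheses of the corollary). The corollary is therefore essentially a qualitative re-packaging of the quantitative bound.
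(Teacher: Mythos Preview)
Your proposal is correct and is exactly the argument the paper intends: the corollary is stated as an immediate consequence of Theorem~\ref{thm:logn}, and your deduction via the Kolmogorov distance bound tending to zero is the intended one-line proof.
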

The condition on the standard deviation $\s_n$ in Corollary~\ref{cor:ball-CLT} is sharp, both in terms of $\delta_n$ and in terms of $n$.

Our second result (and the main ingredient in the proof of the multivariate central limit theorem for strong Rayleigh distributions) says we can weaken the variance condition in Theorem~\ref{thm:logn} all the way to $\s_n \rightarrow \infty$ (the obvious\footnote{The law of $(X-\mu_n)\s_n^{-1}$ is supported on point masses of distance $\geq \s_n$ so we must have $\s_n \rightarrow \infty$ if the sequence approximates the continuous Gaussian distribution.} necessary condition) if we further assume that the sequence $f_n$ has no roots in a small sector $\{ z : |\arg(z)|<\delta\}$ containing the positive real axis. Again, we prove a sharp, quantitative version of this by obtaining an optimal bound on the discrepancy between a normal random variable $Z \sim N(0,1)$ and a random variable $X$, 
based only on the smallest angle made by a root of $f_X$ and the positive real axis.

\begin{theorem} \label{thm:Polysector} 
Let $X \in \{0,\ldots,n\}$ be a random variable with mean $\mu$, standard deviation $\s$ and probability generating function $f_X$ and set $X^{\ast} = (X-\mu)\s^{-1}$.
If $\delta >0$ is the minimum of $|\arg(\z)|$ over the roots $\z$ of $f_X$ then\footnote{The implicit constant can be taken to be $2^{3257}$.}
\begin{equation} \label{equ:sectorCloseToNorm} \sup_{t \in \R}| \PP(X^{\ast} \leq t) - \PP(Z \leq t) | = O\left( \frac{1}{\delta \sigma}\right), \end{equation}
where $Z \sim N(0,1)$. 
\end{theorem}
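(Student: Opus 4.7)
The plan is to apply Esseen's smoothing inequality with cutoff $T = c\delta\sigma$ for a small absolute constant $c > 0$. This reduces the theorem to showing that
\[
\int_{-T}^{T}\frac{|\phi_{X^{\ast}}(t) - e^{-t^{2}/2}|}{|t|}\,dt = O\!\left(\tfrac{1}{\delta\sigma}\right),
\]
since the boundary term $C/T$ is already of the desired order. Writing $\phi_{X^{\ast}}(t) = e^{-it\mu/\sigma} f_{X}(e^{it/\sigma})$ and substituting $\theta = t/\sigma$, the task becomes a precise analysis of $f_{X}$ on the small arc $\{e^{i\theta} : |\theta| \leq c\delta\}$.

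The central technical estimate I would aim to prove is
\[
\bigl|\log f_{X}(e^{i\theta}) - i\theta\mu + \tfrac{1}{2}\theta^{2}\sigma^{2}\bigr| \leq C\,\sigma^{2}|\theta|^{3}/\delta \quad\text{for } |\theta| \leq c\delta.
\]
Granted this, routine exponentiation gives $|\phi_{X^{\ast}}(t) - e^{-t^{2}/2}| \lesssim |t|^{3}e^{-t^{2}/4}/(\delta\sigma)$ on $|t| \leq T$, and integration against $dt/|t|$ yields the desired $O(1/(\delta\sigma))$. To approach this estimate, I would factor $f_{X}(z) = \prod_{k}(z - \zeta_{k})/(1 - \zeta_{k})$. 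The sector hypothesis forces $|1 - \zeta_{k}| \geq \sin\delta$ for every root $\zeta_k$ (the minimum distance from $1$ to the complement of the sector), so $\log f_{X}$ is a single-valued holomorphic function on $\{|z - 1| < \sin\delta\}$ with expansion
\[
\log f_{X}(z) = \sum_{j \geq 1}\frac{(-1)^{j+1}}{j}(z-1)^{j} S_{j}, \qquad S_{j} := \sum_{k}(1 - \zeta_{k})^{-j}.
\]
A direct computation using the logarithmic derivative of $f_{X}$ at $z = 1$ gives $S_{1} = \mu$ and $S_{2} = \mu - \sigma^{2}$, and substituting $z = e^{i\theta}$ combines the low-order contributions to $i\theta\mu - \tfrac{1}{2}\theta^{2}\sigma^{2}$ modulo $O(\theta^{3})$; the remaining task is to bound the tail $\sum_{j \geq 3}(e^{i\theta} - 1)^{j} S_{j}/j$.

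The main obstacle is to control $|S_{j}|$ for $j \geq 3$ without incurring a factor of the degree $n$: one needs an estimate of the form $|S_{j}| \leq C^{j}\sigma^{2}/\delta^{j-2}$. A natural route is to establish a second-moment bound $\sum_{k} |1 - \zeta_{k}|^{-2} \leq C\sigma^{2}$, which, combined with the uniform separation $|1 - \zeta_{k}| \geq \sin\delta$, would yield $|S_{j}| \leq (\sin\delta)^{-(j-2)} \sum_{k}|1 - \zeta_{k}|^{-2}$ as desired. The subtlety is that $S_{2} = \mu - \sigma^{2}$ is a signed complex sum allowing cancellation among roots, so such a second-moment bound is not immediate from the identity alone — one must use more. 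A promising geometric observation is that, under the sector hypothesis, every $1/(1 - \zeta_{k})$ lies in a fixed closed half-plane (upper if $\arg\zeta_{k} \in (\delta, \pi]$, lower if $\arg\zeta_{k} \in [-\pi, -\delta)$), so that cancellation in the imaginary parts is impossible and cancellation in the real parts is controlled. Producing the second-moment bound — perhaps via Cauchy contour integration of $\log f_{X}$ around $z = 1$ with a Borel–Carath\'eodory-type control in the root-free disk — is where the bulk of the technical work would concentrate, and it is precisely the sector hypothesis (as opposed to the ball hypothesis of Theorem~\ref{thm:logn}) that enables this sharpening and the removal of the $\log n$ factor.
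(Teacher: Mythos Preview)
Your proposal has a genuine gap at the key technical step. The second-moment bound $\sum_k |1-\zeta_k|^{-2} \leq C\sigma^2$ you aim for is false in general, as is the derived bound $|S_j| \leq C^j\sigma^2/\delta^{j-2}$. For a concrete counterexample take $X \sim \mathrm{Bin}(n,p)$ with $p = 1 - n^{-1/2}$: the unique root $\zeta = -(1-p)/p$ has $\arg\zeta = \pi$, so the sector hypothesis holds with $\delta$ as large as one likes; yet $\sum_k|1-\zeta_k|^{-2} = np^2 \asymp n$ while $\sigma^2 = np(1-p) \asymp n^{1/2}$, and likewise $|S_3| = np^3 \asymp n \gg \sigma^2/\delta$. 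The obstruction is structural: the power sums $S_j$ in the $(z-1)$-expansion are governed by $\mu$ rather than by $\sigma^2$ (indeed $S_1 = \mu$ and $S_2 = \mu - \sigma^2$), and the favourable cubic bound you seek only emerges \emph{after} the substitution $z = e^{i\theta}$, through cancellation among different $j$ that a termwise estimate on $|S_j|$ cannot capture. A contour or Borel--Carath\'eodory argument using only the zero-free region cannot supply the missing bound either, since such arguments are blind to the sign of the coefficients of $f_X$ --- a hypothesis the paper stresses must be used essentially.

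The paper instead expands in $w = \log z$, so that the coefficients are the normalized cumulants $a_j = \kappa_j/j!$, and proves $|a_j| \leq (c\delta)^{2-j}\sigma^2$ for $j \geq 3$. This bound \emph{is} true (in the binomial example $\kappa_3 = np(1-p)(1-2p)$ carries the factor $\sigma^2$ that $S_3$ lacks), but establishing it requires the machinery of Sections~\ref{sec:b-decreasing}--\ref{sec:BES}: harmonic-function estimates show the logarithmic potential is $0$-decreasing on the sector, a comparison lemma (Lemma~\ref{lem:money}) together with Lemma~\ref{lem:CumulantDecay} bounds the cumulant tail, and finally Lemma~\ref{lem:BES} exploits the weak-positivity inequality $|f_X(z)| \leq f_X(|z|)$ to force the second cumulant $\sigma^2$ to dominate all later ones. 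It is this last step, not a root-by-root estimate, that delivers the control your outline is missing.
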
 

Theorem~\ref{thm:Polysector}, immediately implies the following limit theorem for distributions where the smallest argument of a root \emph{just} exceeds the reciprocal of the standard deviation. 

\begin{corollary}\label{cor:sector-CLT} For each $n\geq 1$, let $X_n \in \{0,\ldots, n\}$ be a random variable with mean $\mu_n$ standard deviation $\s_n$ and probability generating function $f_n$. If the roots $\z$ of $f_n$ satisfy $|\arg(\z)| \geq \delta_n$ and \[ \delta_n\s_n \rightarrow \infty,\] then $(X_n - \mu_n)\s_n^{-1} \rightarrow N(0,1),$ in distribution. 
\end{corollary}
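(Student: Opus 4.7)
The corollary is essentially a direct consequence of Theorem~\ref{thm:Polysector}, so my plan is to simply quote the theorem and then invoke a standard fact about Kolmogorov distance. First, for each $n$, I apply Theorem~\ref{thm:Polysector} to the random variable $X_n$ with the corresponding parameter $\delta_n$. This is legitimate because the hypothesis on the roots of $f_n$ is precisely the hypothesis of Theorem~\ref{thm:Polysector}; note that although Theorem~\ref{thm:Polysector} is stated for a single random variable and does not explicitly require $\delta_n \leq 1$ or any normalization of $\delta_n$ relative to $n$, the quantitative conclusion holds uniformly with an absolute implicit constant.

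This yields, for some absolute constant $C>0$,
\[
\sup_{t \in \R}\bigl|\PP(X_n^{\ast} \leq t) - \PP(Z \leq t)\bigr| \leq \frac{C}{\delta_n \sigma_n},
\]
where $X_n^{\ast} = (X_n - \mu_n)\sigma_n^{-1}$ and $Z \sim N(0,1)$. By the hypothesis $\delta_n \sigma_n \to \infty$, the right-hand side tends to $0$ as $n \to \infty$, so $X_n^{\ast}$ converges to $Z$ in Kolmogorov distance.

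To conclude convergence in distribution, I would appeal to the standard fact that if the cumulative distribution functions $F_n$ of a sequence of random variables converge uniformly to the cumulative distribution function $F$ of a limit random variable, then in particular $F_n(t) \to F(t)$ at every point $t$; if furthermore $F$ is continuous (which is the case for $F(t) = \PP(Z \leq t)$), this pointwise convergence at every $t$ is precisely convergence in distribution. Hence $(X_n - \mu_n)\sigma_n^{-1} \to N(0,1)$ in distribution, as claimed.

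There is no real obstacle here beyond invoking Theorem~\ref{thm:Polysector}; the entire content of the corollary lies in the quantitative estimate \eqref{equ:sectorCloseToNorm}, and the passage from that estimate to the qualitative limit theorem is routine. The one mild point to be careful about is that the implicit constant in Theorem~\ref{thm:Polysector} is absolute and does not depend on $n$ or $\delta_n$, so the bound $C/(\delta_n \sigma_n)$ really does go to zero under the stated assumption.
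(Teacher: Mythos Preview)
Your proof is correct and is exactly the intended argument: the paper simply states that Corollary~\ref{cor:sector-CLT} follows immediately from Theorem~\ref{thm:Polysector}, and you have filled in the (routine) details of that implication. The only cosmetic point is that Theorem~\ref{thm:Polysector} takes $\delta$ to be the \emph{minimum} of $|\arg(\zeta)|$, whereas the corollary only assumes $|\arg(\zeta)| \geq \delta_n$; but since the bound $O(1/(\delta\sigma))$ is decreasing in $\delta$, replacing the true minimum by the smaller $\delta_n$ only weakens the estimate, so this causes no difficulty.
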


Again, as we shall see in Section~\ref{sec:sharpness}, the condition on $\s_n$ is sharp for all sequences $(\delta_n)_n$.

With Theorem~\ref{thm:Polysector} in hand, it is not hard to prove our multivariate central limit theorem for strong Rayleigh distributions, following a key observation of Ghosh, Liggett and Pemantle. To properly state this result, recall that if $X = (X_1,\ldots,X_d) \in \R^d$ is a random variable, its \emph{covariance matrix} $A = A(X)$ is a symmetric, semi-definite matrix defined by 
\[ (A)_{i,j} := \E X_iX_j - \E X_i\, \E X_j ,
\] and its \emph{maximum variance} $\s_n^2$ is defined as the $\ell^2$-operator norm of $A$. For a $d\times d$ positive semi-definite matrix $A$
and $\mu \in \R^d$ we denote the Gaussian random variable with covariance matrix $A$ and mean $\mu$ by $N(\mu,A)$.

Motivated by a vast literature on central limit theorems and multivariate generating functions \cite{bender73,bender83,gao92,pemantle04,pemantle08,Branden-Jonasson},
Ghosh, Liggett and Pemantle proved a central limit theorem for strong Rayleigh distributions, in the case that the sequence maximum variances $\s_n^2$ grows sufficiently quickly, namely $\s_nn^{-1/3} \rightarrow \infty$. In particular, they proved that if $d \in \N$, and $X_n \in \{0,\ldots,n\}^d$ is a sequence of strong Rayleigh distributions with covariance matrices $\{A_n\}$ that satisfy $\s_n^{-2}A_n \rightarrow A$ and $\s_nn^{-1/3} \rightarrow \infty$ then $(X_n -\mu_n)\s_n^{-1} \rightarrow N(0,A)$, weakly. They conclude their paper by asking for the best possible condition on the growth of $\s_n$ and ask, in particular, if the condition $\s_n \rightarrow \infty$ is sufficient in their theorem. 
In \cite{clt1}, we made progress on this problem by showing that $\s_n > n^{\eps}$ is sufficient, for any $\eps >0$. Here, we are able to completely resolve the question
of Ghosh, Liggett and Pemantle by showing that the obvious necessary condition, $\s_n \rightarrow \infty$, is indeed sufficient.

\begin{theorem} \label{thm:stable_clt} For $d \in \N$ and each $n \geq 1$, let $X_n \in \{0,\ldots ,n\}^d$ be a random variable with covariance matrix $A_n$ and maximum variance $\s^2_n$. If the probability generating functions of $X_n$ are real-stable, $\s_n \rightarrow \infty$ and $\s_n^{-2}A_n \rightarrow A$, then 
\[(X_n - \mu_n)\s_n^{-1} \rightarrow N(0,A),\] in distribution. \end{theorem}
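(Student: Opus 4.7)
My plan is to apply the Cram\'er-Wold device in combination with Theorem~\ref{thm:Polysector}, handling projections onto the non-negative orthant directly via the univariate sector result and then extending to all of $\R^d$ by the method of moments. The starting point is the observation of Ghosh, Liggett and Pemantle: for any $v \in \Z_{>0}^d$, the projection $Y_n := \langle v, X_n\rangle$ has probability generating function $f_{Y_n}(z) = f_{X_n}(z^{v_1}, \ldots, z^{v_d})$, and if $\arg(w) \in (0, \pi/\max_i v_i)$ then each $w^{v_i}$ lies in $\bH$, so real-stability of $f_{X_n}$ forces $f_{Y_n}(w) \neq 0$. Combined with conjugation symmetry, every root $\zeta$ of $f_{Y_n}$ obeys $|\arg(\zeta)| \geq \pi/\max_i v_i$, so Theorem~\ref{thm:Polysector} yields a quantitative CLT for $Y_n$ at rate $O(1/\s(Y_n))$. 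Since $\s(Y_n)^2 = v^{\top} A_n v \sim (v^{\top} A v)\,\s_n^2 \to \infty$ whenever $v^{\top} A v > 0$, this gives $\langle v, X_n - \mu_n\rangle/\s_n \to N(0, v^{\top} A v)$ in distribution (the degenerate case $v^{\top} A v = 0$ is handled by Chebyshev). A routine density argument, clearing denominators and then approximating, extends this to every $v \in \R_{\geq 0}^d$.

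The main obstacle is passing from the non-negative orthant to arbitrary $v \in \R^d$: for mixed-sign $v$, the substitution $z \mapsto (z^{v_1}, \ldots, z^{v_d})$ no longer maps into $\bH^d$, so real-stability provides no sector information and Theorem~\ref{thm:Polysector} cannot be applied directly. I would resolve this by the method of moments. As preparation, I establish uniform moment bounds on $X_n^*$: specializing all but one variable of $f_{X_n}$ to $1$ yields a nontrivial real-stable, hence real-rooted, univariate polynomial (via a Hurwitz-theorem perturbation $z_d \mapsto 1 + i\eps$), so every marginal $(X_n)_i$ has a log-concave distribution. The standard log-concave moment bound gives $\E|(X_n)_i - \E (X_n)_i|^p \leq C_p\,\s_n^p$ (using $(A_n)_{ii} \leq \|A_n\|_{\mathrm{op}} = \s_n^2$), and the $L^p$ triangle inequality then yields $\E|\langle v, X_n^*\rangle|^p \leq (C_p^{1/p}\|v\|_1)^p$ uniformly in $n$ for every $v \in \R^d$.

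With uniform moment bounds in hand, for each $v \in \R_{\geq 0}^d$ the convergence $\langle v, X_n^*\rangle \to N(0, v^{\top} A v)$ upgrades (by uniform integrability) to convergence of all moments. Now observe that $v \mapsto \E\langle v, X_n^*\rangle^k$ is a polynomial in $v$ of degree $k$, with coefficients the mixed moments of $X_n^*$; by uniform boundedness these coefficients converge along any given subsequence after passing to a further subsequence. The resulting limit polynomial must equal $(k-1)!!(v^{\top} A v)^{k/2}$ (or $0$ for odd $k$) on the open orthant $\R_{>0}^d$ by the previous paragraph, and hence on all of $\R^d$ by polynomial identity. Since every subsequential limit is the same, $\E\langle v, X_n^*\rangle^k$ converges to the $k$-th moment of $N(0, v^{\top} A v)$ for every $v \in \R^d$ and every $k$. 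Moment determinacy of the Gaussian then gives $\langle v, X_n^*\rangle \to N(0, v^{\top} A v)$ in distribution for every $v \in \R^d$, and the Cram\'er-Wold device yields $(X_n - \mu_n)/\s_n \to N(0, A)$.
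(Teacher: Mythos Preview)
Your proof is correct and takes a genuinely different route from the paper's. Both start identically: the Ghosh--Liggett--Pemantle sector observation combined with Theorem~\ref{thm:Polysector} gives $\langle v, X_n^*\rangle \to N(0,v^\top A v)$ for $v \in \Z_{\geq 0}^d$ (handling the degenerate case by Chebyshev). The divergence is in how to pass from the non-negative orthant to all of $\R^d$. The paper does this in one stroke by invoking a sharp Cram\'er--Wold theorem of Cuesta-Albertos, Fraiman and Ransford (Theorem~\ref{thm:CAFR}/Corollary~\ref{cor:CAFR}): since the Gaussian satisfies Carleman's condition and $\Z_{\geq 0}^d$ is not contained in any algebraic hypersurface, agreement of projections on $\Z_{\geq 0}^d$ already pins down the limit. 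Tightness plus subsequential limits finishes.

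Your method-of-moments route avoids this black box but pays for it by drawing more heavily on the structure of strong Rayleigh measures: you use that specializations of real-stable polynomials to real values remain stable (hence real-rooted in one variable), so each marginal is a sum of independent Bernoullis and obeys log-concave moment bounds. This yields uniform $L^p$ control on all projections, which lets you upgrade distributional convergence on the positive orthant to moment convergence there, and then extend to all of $\R^d$ by the polynomial-identity trick (two polynomials agreeing on a Zariski-dense set coincide). The paper explicitly remarks that it makes ``essentially no use of the rich theory of stable polynomials,'' so your argument trades an external measure-theoretic result for internal structural facts about stability. Both are clean; yours is arguably more self-contained, while the paper's is shorter and more portable (it would work verbatim for any zero-free-region hypothesis giving the same sector conclusion, without needing log-concavity of marginals).
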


We can additionally prove a quantitative form of this theorem (in the spirit of Theorem~\ref{thm:logn} and Theorem~\ref{thm:Polysector}), but we defer this more technical result to a later paper.

It is perhaps interesting to note that in this paper, we make essentially no use of the rich theory of stable polynomials and, as a result,
our work here provides (what appears to be) a new and flexible tool-set for working with real stable polynomials. To illustate, in Section~\ref{sec:CLT-for-stable} we show that our method immediately implies a version of Theorem~\ref{thm:stable_clt} for \emph{Hurwitz stable polynomials} \cite{hurwitz64}, a similar and well studied notion \cite{matroid-halfplane,wagner05,wagner09} along with other polynomials satisfying a similar ``half-plane property''. Our methods are also of use beyond proving central limit theorems. In a forthcoming paper \cite{var-paper}, we use the tools from this paper to prove a close connection 
between the roots of $f_X$ and the \emph{variance} of $X$.

To conclude the introduction, we take a moment to situate our results in a wider mathematical context and draw a few connections. The use of 
roots to study combinatorial distributions has a long and distinguished history in mathematics and has provided many surprising connections.
Perhaps the most classical instance comes from the monumental contribution of Riemann, who connected the location of the roots of the Riemann zeta function 
with the distribution of the primes, thereby setting the stage for the proof of the prime number theorem and, more generally, the field of analytic number theory.
Indeed, the Riemann hypothesis is exactly a statement about the location of the roots of the zeta function. 
In a somewhat different area, statistical physics, Lee and Yang \cite{lee-yang,yang-lee} drew another surprising and influential connection between the roots of polynomials and phase transitions in physical systems. They were also able to show that in the case of the Ising model, the roots of the probability generating function (i.e. the partition function) associated with the number of ``up-spins'' always lie on the unit circle. In combinatorics, the roots of various polynomials associated with graphs and other combinatorial objects have also been shown to have particular regions without zeros (see, for example, \cite{jackson-procacci-sokal,peters-regts17,regts18,chro-poly}). 
The most classical example here is the striking theorem of Heilmann and Lieb \cite{heilmann-lieb72}, which says that the roots of the matching polynomial are \emph{real}. 

In some cases, zero-free regions have been used directly to prove central limit theorems. In the case of matchings, a line of results \cite{harper67,godsil81,rucinski84}, starting with the work of Godsil \cite{godsil81} and cumulating in the work of Kahn \cite{Kahn00}, used the Heilmann Lieb theorem to give general results for when the size of a random matching in a graph is approximately normal. In the case of the Ising model, the work of Lee and Yang was used to prove central limit theorems for the number of ``up-spins'' by Iagolnitzer and Souillard \cite{ising1979} and later by Lebowitz, Pittel, Ruelle and Speer \cite{LPRS}. In a similar vien, Scott and Sokal \cite{scott-sokal05,scott-sokal06}, who built on the work of Shearer \cite{shearer85} and Dobrushin \cite{dobrushin96}, showed a close connection between zero-free regions and the Lovasz Local Lemma, another probabilistic tool which is used for showing that a particular event has non-zero probability. 

The philosophy that appears to have emerged from these advances is that the roots of combinatorially defined objects often have special structure and admit particular zero-free regions. This observation was made explicit by Rota, who sought to give ``combinatorial meaning'' to the distributions of roots in these settings \cite{rota-interview}. 
In this light, one could see our results as a general contribution to this program of Rota (his so-called \emph{critical program}) by giving combinatorial meaning to the roots of a wide class of polynomials.

\section{Outline of Proof}
Theorems~\ref{thm:logn} and Theorem~\ref{thm:Polysector} are proved in parallel and can be thought of as two consequences of the same general method.
As such, in the discussion here, we are intentionally vague about which of these two theorems we are proving.  
Now, let $X \in \{0,\ldots,n\}$ be a random variable with probability generating function $f_X$ and consider the \emph{characteristic function} of $X$, which is a relative of $f_X$ and defined as $\psi_X(\xi) := f_X(e^{i\xi})$, where $\xi \in \R$. The relevant feature of the characteristic function is that it detects the closeness between two probability distributions: a sequence of random variables $Y_n$ converge in distribution to the random variable $Y$ if and only if the sequence of characteristic functions $\psi_{Y_n}$ converge to the characteristic function of $Y$ $\psi_Y$ \emph{point-wise}. Of course, our results here are quantitative, but this fact serves as some guide: to show that $Y$ is approximately normal
it is enough to show that $\psi_{Y}(\xi) \approx e^{-\xi^2/2}$, where $\psi_Z  := e^{-\xi^2/2} $ is the characteristic function
of the standard normal $Z \sim N(0,1)$. With this in mind, it is natural to center and scale $X$, by writing $X^{\ast} := (X-\mu)\s^{-1}$ and then to consider
the logarithm of $\psi_{X^{\ast}}$, due to the exponential form of $\psi_Z$. Indeed, we will be able to express $\log|\psi_{X^{\ast}}(\xi)|$ as  
\begin{equation} \label{eq:sketch-cumulant-sequence} \log|\psi_{X^{\ast}}(\xi)| = \sum_{j\geq 2} a_j\s^{-j}\Re(i^j\xi^j), 
\end{equation} where $\xi$ is in a sufficiently small neighborhood of $0$ and $(a_j)$ is a sequence of real numbers. 

It turns out that $a_2 = -\s^2/2$ and hence the first term of the series is $-\xi^2/2$, just as we saw in the exponent of $\psi_Z$. From this vantage, our task is becomes clear: we need to show that $|a_j| \ll \s^j$ in order to have $\psi_{X^{\ast}} \approx e^{-\xi^2/2}$.

With our goal now laid out, we turn to consider the function $u(z) := \log|f_X(z)|$ in a region around $1 \in \C$ in the complex plane. Note that here we can quite naturally make use of our zero-free hypothesis: if $f_X$ is zero free in a region, then the function $u$ is harmonic in this region. Now, while the fact that $u$ is harmonic on a particular region \emph{is} a useful property, it is far from enough to prove our main theorems\footnote{In fact, as we discussed in \cite{clt1}, the results of
Lebowitz, Pittel, Ruelle and Speer are actually \emph{sharp} if one generalizes their theorem to polynomials that have negative coefficients. Thus, we \emph{must} use the non-negativity hypothesis in an essential way.}; we will additionally need to make particular use of the fact that $f_X$ has positive coefficients, a property that we use in the form of ``weak-positivity'' for the function $u$ (see Section~\ref{sec:defs}).

As we will see in Section~\ref{sec:b-decreasing}, this notion of weak-positivity interacts nicely with the harmonic property of $u$, to give us another ``positivity'' notion which we make heavy use of.  For $b\geq 0$, $\eps >0$, we say that function $u$ on $B(1,\eps)$, with $u(z) = u(\bar{z})$, is $b$-\emph{decreasing} if for all $0 < \t_1 < \t_2 <\delta$ we have 
\begin{equation} \label{eq:outline-b-decreasing} u(\rho e^{i\t_1}) - u(\rho e^{i\t_2}) \geq -b,
\end{equation} where the functions are defined. In Section~\ref{sec:b-decreasing} we prove Lemma~\ref{lem:b-decreasing}, which is our main tool for showing that a function is $b$-decreasing. This lemma says that if $u$ is a weakly-positive, harmonic function on $S := \{z \in \C : R^{-1}< |z|<R,\, |\arg(z)| < \delta \}$
then the function $u$ is $b$-decreasing if 
\begin{equation} \label{eq:outline-cond-b-dec} \exp\left( -\delta^{-1} \log(R/r)\right )\max_{z}  |u(z)|  \leq b/10, \end{equation}
where the maximum is taken over the ``ends'' of $S$, defined as 
\[ S^{\ast} := \{ z \in \C: |z| \in \{R^{-1},R\}, \, \arg(z) \in [-\delta,\delta] \}.\] Without going into details, one can already see two important features of \eqref{eq:outline-cond-b-dec}. Firstly, if $u$ is harmonic and weakly-decreasing in an \emph{entire} sector $\{z : \arg(z) \in [-\delta,\delta] \}$, then the left-hand-side of 
\eqref{eq:outline-cond-b-dec} can be taken to be arbitrarily small (by letting $R \rightarrow \infty$) and so we learn that $u$ is $0$-decreasing, which perhaps should strike the reader as a reasonably strong property. Secondly, we note the exponential dependence on the width $\delta$ of the sector. This ultimately accounts for the factor of $\log n$ that appears in Theorem~\ref{thm:logn}.

With this tool in-place, we turn to show how to use the $b$-decreasing hypothesis to get some control of the sequence $(a_j)_{\geq 2}$. In a series of steps, we work towards Lemma~\ref{thm:maintechnical} and its important corollary, Corollary~\ref{cor:CLT}. These results are perhaps the main technical contributions of this paper and their proof consumes Sections~\ref{sec:Proof-of-tec-thm-key-comparison}-\ref{sec:technical-proof}. 
Lemma~\ref{thm:maintechnical} says that if $u$ is a $b$-decreasing, weakly-positive, harmonic function on $S$ then the associated characteristic function $\psi$ of $X^{\ast}$ must look like 
\begin{equation} \label{eq:outline-thm-maintech} \psi(\xi) = \exp\left( -\xi^2/2 + R(\xi) \right),
\end{equation} where the ``remainder term'' $R(\xi)$ is controlled by $|R(\xi)| \leq c|\xi|^3/(\delta\s)$, for all $\xi \in \R$ satisfying $|\xi| < c_2\delta\s$. 
Of course, the reader should interpret \eqref{eq:outline-thm-maintech} as saying that $\psi$ looks like the characteristic function of a standard normal,
up to the remainder-term $R$.

The proof of \eqref{eq:outline-thm-maintech} is carried out in three main steps. The first step is proved in 
Section~\ref{sec:Proof-of-tec-thm-key-comparison} where we prove an important supporting result, Lemma~\ref{lem:money}, that allows us to compare the maximum of $u_0 := u - \mu\log|z|$, a re-normalized form of $u$,
to a particular function $\varphi_{\g,b}(z)$ (defined in Section~\ref{sec:defs}) which is both harmonic and \emph{positive} on a region containing $1\in \C$.  

In Section~\ref{sec:TailOfcumulants} we use this ``comparison'' lemma to prove Lemma~\ref{lem:CumulantDecay}, which tells us that the sequence $(a_j)_{j\geq 2}$ has nice decay properties; for every $L \geq 2$, we have that 
\begin{equation} \label{eq:outline-CumulantFrac} 
\frac{ \sum_{j\geq L } |a_j|\eps^j }{ \sum_{j\geq 2} |a_j|\eps^i } \leq C \cdot 2^{-L}, \end{equation}
provided 
\[  \sum_{j\geq 2} |a_j|\eps^j > b, \]
where $C$ is a large, but absolute, constant and $\eps \approx \delta$. 

We stress that \eqref{eq:outline-CumulantFrac} is a major step towards proving Lemma~\ref{thm:maintechnical} and indeed Sections \ref{sec:Proof-of-tec-thm-key-comparison} and \ref{sec:TailOfcumulants} are probably the most pivotal in
the paper. However, \eqref{eq:outline-CumulantFrac} is not quite enough. Roughly speaking, \eqref{eq:outline-CumulantFrac} says that we have quite a bit of the ``mass''
of the sequence $(a_j)_j$ is focused on the early terms, but what we actually need to show that ``most'' of the mass is on the \emph{second term}, $a_2 = - \s^2/2$.

For this next step, carried out in Section~\ref{sec:BES}, we prove Lemma~\ref{lem:BES}, which says that if $u$ is weakly-positive and harmonic around $1 \in \C$,
\emph{and} $|a_j|$ is large for some small $j\geq 2$ then $|a_2|$ must also be large. This allows us to control the magnitude of each of the terms $|a_j|$
relative to the value of $|a_2|$. Applying Lemma~\ref{lem:CumulantDecay} and Lemma~\ref{lem:BES} in sequence allows us to deduce \eqref{eq:outline-thm-maintech}.

Now, while \eqref{eq:outline-thm-maintech} tells us that the characteristic function $\psi$ of $X^{\ast}$ is roughly like the characteristic function of a
standard normal, we really care about showing that the \emph{distribution} of $X^{\ast}$ is close to the distribution of a standard normal. For this, we need an appropriate ``Fourier inversion'' step. This step is carried out in Section~\ref{sec:proofs-of-thms}, just before we go on to deduce Theorems~\ref{thm:logn} and \ref{thm:Polysector}.

In Section~\ref{sec:CLT-for-stable}, we turn to use the results developed in previous chapters to prove our multivariate central limit theorem for strong Rayleigh distributions. This is achieved by first using a fundamental observation of Ghosh, Liggett and Pemantle that says that if $X \in \{0,\ldots,n\}^d$ is a random variable with real-stable generating function, then the characteristic functions of the one-dimensional projections $\langle X, v\rangle$, where $v \in \Z_{\geq 0} ^d$, have no roots in a small sector. Theorem~\ref{thm:Polysector} then allows us to show all of these these projections are approximately normal. We then use a strong version of the Cram\'{e}r-Wold theorem to lift this information to deduce that $X$ itself must be a approximately normal. 

 In Section~\ref{sec:sharpness} we give examples, demonstrating the tightness of our results. Finally, in Section~\ref{sec:general-distributions} we
briefly discuss how the main results of this paper can be generalized to go beyond polynomials to prove sharp results for power series and more general analytic functions.

\section{Definitions and basic properties} \label{sec:defs}

In this section we fix a few notations and introduce the central objects of our proof. 

Throughout, we use the notations $\R_{\geq 0}, \R_{\leq 0}$ and so on, to denote the non-negative reals and non-positive reals respectively and extend these definitions in the obvious way to $\Z$. If $z \in \C$, we write $z = re^{i\t}$, where $r >0$ and $\t \in [-\pi,\pi]$, and then define the \emph{argument of} $z$ to be $\arg(z) = \t$.
For $-\pi \leq  \beta < \alpha \leq \pi$, we define the \emph{sector}
\[ S(\alpha,\beta) := \{ z \in \C \setminus \{0\} : \alpha \leq \arg z \leq \beta \} \] and
$S(\alpha) := S(-\alpha, \alpha)$. For $R \geq 1$ and $\eps >0$, we define the \emph{truncated sector}
\[ S_{R}(\eps) := \{ z  \in \C: |z| \in [R^{-1},R],\textit{ and } \arg(z) \in [-\eps,\eps]\} \]
and define 
\[ S^{\ast}_{R}(\eps) := \{ z \in \C: |z| \in \{ R^{-1},R\},\textit{ and } \arg(z) \in [-\eps,\eps]\} 
\] to be the \emph{ends} of the sector $S_R(\eps)$. 
We also use the notation $S_R(\alpha,\beta) = S(\alpha,\beta)\cap S_R(\pi) $ in a similar way
and use the (standard) notation $\p \Omega$, to denote the boundary of a region $\Omega \subseteq \C $.

\subsection{The logarithmic potential}
If $X \in \{0,\ldots,n\}$ is a random variable, define
\[ f_X(z) = \EE\, z^X = \sum_{k=0}^n \PP(X = k) z^k  
\] to be its probability generating function, $\mu = \EE\, X$ for its mean and $\sigma^2 = \Var[X]$ for its variance. Also note 
that $f_X(1) = 1$. Now define 		
\[ u(z) = u_X(z) := \log |f_X(z)|\,, 
\] to be the \emph{logarithmic potential} of $X$ and observe that if $f_X(z)$ is zero-free in an open set $\Omega \subseteq \C$ 
then $u$ is \emph{harmonic} on $\Omega$. This key connection will allow us to exploit the theory of harmonic functions in certain regions of $\C$.
We will say that a function $u$ on $\Omega$ is \emph{symmetric on} $\Omega$ if 
\begin{equation} \label{equ:sym} u(z) = u(\zbar) 
\end{equation} for all $z$ with $z,\bar{z} \in \Omega$.  Of course, the logarithmic potential $u_X(z)$ is symmetric as $f_X$ is a polynomial with real coefficients 
and so 
\[ u(z) = \log|f_X(z)| = \log|\overline{f_X(z)}|= \log|f_X(\bar{z})| = u(\bar{z}).\]
A third key property is particular to 
the fact that $f_X$ is a probability generating function; that is, it is a polynomial with non-negative coefficients. We say that a function $u$
is \emph{weakly-positive} on $\Omega$ if 
\begin{equation} \label{equ:weakPositivity} u(|z|) - u(z) \geq 0,  \end{equation}
for all $z \neq  0$ with $z,|z| \in \Omega$.  Weak-positivity of $u_X$ follows by taking the logarithm of both sides of the inequality 
\[ |f_X(z)| = |\EE z^X | \leq \EE|z|^X = |f_X(|z|)|.\]  

We also note a useful expression of $u_X$ in terms of the roots $\{ \z \}$ of $f_X$
	\begin{equation} \label{equ:rootFormOfu} u_X(z) = \sum_{|\zeta| < 1} \log\left|1 - \frac{\zeta}{z} \right| + \sum_{|\z| \geq 1 }\log\left|1 - \frac{z}{\zeta}\right| + c_X + N_X \log|z| \,, 
	\end{equation} where $c_X$ is defined so that $u_X(1) = \log|f_X(1)| = 0 $ and $N_X$ is the number of roots of $f_X$ with $|\zeta| < 1$.

\subsection{The exponential scale} We shall often work with the function $u = u_X$ on an ``exponential scale'' by defining $ U(w) := u(e^{w})$.  
Note that $U(w)$ is harmonic when $u$ is (in the appropriate domains) and is also symmetric, since 
\[ U(\bar{w}) = u(e^{\bar{w}}) = u(\overline{e^{w}}) = u(e^w) = U(w). \]
 The importance of this form is made clear by Lemma~\ref{lem:USeries}; the Taylor expansion of $U(w)$ at $w = 0 $ reveals the \emph{cumulants} of $X$, which we denote by $(\kappa_j)_{j\geq 1}$. We don't need to draw on much external information here about this important sequence, but we do need to note that the first and second cumulants are familiar probabilistic quantities. Indeed,
 
 \begin{equation}\label{eq:k1-is-mean} \k_1 = \frac{d}{dw} u(e^w) \Big|_{w = 0} = \mu, 
\end{equation} and 
 \begin{equation} \label{eq:k2-is-var} \k_2 = \frac{d^2}{dw^2} u(e^w) \Big|_{w = 0} = \sigma^2\,, 
 \end{equation} which exist under the condition that $u$ is harmonic in a neighborhood of $1 \in \C$. 
 
Since our interest is in central limit theorems, when working with $u$ it is often useful to ``subtract out'' the term corresponding $\mu$. In particular, define
 \begin{equation} \label{eq:def-of-u0} u_0(z) := u(z) - \mu \log |z|\,, \end{equation}
and correspondingly define $U_0(w):= u_0(e^{w})$. If $u$ is an arbitrary function (that is, not necessarily coming from a random variable)
we may define $u_0$ and $U_0$ in the same way, by simply taking $\mu:=\frac{d}{dw} u(e^w)\big|_{w = 0}$, in the case that this quantity exists.

\begin{lemma} \label{lem:USeries} For $\eps \in (0,1/2)$, let $u$ be a symmetric harmonic function on $B(1,2\eps)\subseteq \C$ with $u(1)=0$. 
Then if $U,U_0$ are defined as above, we may express 
\begin{equation} \label{equ:USeries} U(w) = \sum_{j\geq 1} a_j \Re( w^j ) \end{equation} and 
\begin{equation}  \label{equ:U0Series} U_0(w) = \sum_{j\geq 2} a_j\Re(w^j ),\end{equation}
for all $w \in B(0,\eps)$.
Where the $a_j$ are real numbers and $j!a_j = \kappa_j $, where $\k_j$ is the $j$th cumulant. 
\end{lemma}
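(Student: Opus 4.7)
The plan is to pass from $u$ to its holomorphic extension and then substitute the exponential. Since $B(1,2\eps)$ is simply connected and $u$ is harmonic there, $u$ admits a harmonic conjugate $v$, which I normalize by $v(1) = 0$, and set $F := u + iv$, holomorphic on $B(1, 2\eps)$ with $F(1) = 0$. The symmetry hypothesis $u(z) = u(\bar z)$ upgrades, via the Schwarz reflection principle, to the identity $F(\bar z) = \overline{F(z)}$: the function $\tilde F(z) := \overline{F(\bar z)}$ is holomorphic on $B(1,2\eps)$ with $\Re \tilde F = u$, so $\tilde F - F$ is a purely imaginary constant, which vanishes at $z = 1$ by the normalization of $v$.

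Next, I would set $G(w) := F(e^w)$ for $w$ in a neighborhood of $0$. The elementary bound $|e^w - 1| \leq |w| e^{|w|} < 2\eps$ for $|w| < \eps < 1/2$ shows that $e^w \in B(1, 2\eps)$, so $G$ is holomorphic on $B(0, \eps)$; moreover the identity $e^{\bar w} = \overline{e^w}$ transports the reflection relation to $G(\bar w) = \overline{G(w)}$. Expanding $G$ in Taylor series around $0$ as $G(w) = \sum_{j \geq 0} c_j w^j$ and comparing $\overline{G(w)} = \sum_j \overline{c_j}\, \bar w^j$ against $G(\bar w) = \sum_j c_j \bar w^j$ forces $c_j \in \R$ for every $j$; also $c_0 = G(0) = F(1) = 0$. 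Setting $a_j := c_j$ and taking real parts yields
\[
U(w) = \Re G(w) = \sum_{j \geq 1} a_j \Re(w^j)
\]
for $w \in B(0,\eps)$, which is \eqref{equ:USeries}.

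For the expansion of $U_0$, observe $U_0(w) = U(w) - \mu \log|e^w| = U(w) - \mu \Re(w)$, so it suffices to check $a_1 = \mu$. Since the series for $U$ converges uniformly on compact subsets of $B(0,\eps)$, termwise differentiation at $w = 0$ along the real axis gives $a_1 = \frac{d}{dw} u(e^w)\big|_{w=0} = \mu$, and \eqref{equ:U0Series} follows. The identification with the cumulants is built into the construction: when $u = u_X$, the holomorphic function $G$ is precisely the extension of $w \mapsto \log f_X(e^w)$, namely the cumulant generating function of $X$, whose Taylor coefficients are $\k_j/j!$ by definition; in the general setting one simply takes this as the defining relation for $\k_j$.

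I do not anticipate a serious obstacle here: the argument is essentially a clean application of Schwarz reflection composed with an exponential change of variable. The only point that repays a little care is verifying that the reflection identity $G(\bar w) = \overline{G(w)}$ does force reality of every Taylor coefficient, which one handles by matching the two Taylor expansions term-by-term as indicated above and invoking uniqueness of power series representations.
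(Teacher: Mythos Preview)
Your proof is correct and follows essentially the same approach as the paper's: write $U$ as the real part of a holomorphic function, expand in a power series, and use the symmetry $U(w)=U(\bar w)$ to force the coefficients to be real. The only cosmetic difference is that you build the holomorphic extension $F$ on $B(1,2\eps)$ and then compose with the exponential, whereas the paper takes the harmonic conjugate of $U$ directly on $B(0,\eps)$; and you deduce reality of the coefficients via the Schwarz reflection identity $G(\bar w)=\overline{G(w)}$ on Taylor series, while the paper argues via uniqueness of trigonometric series for $U(w)-U(\bar w)=0$.
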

\begin{proof} First note that $U(0) = u(1) = 0$ and that $U(w)$ is harmonic and symmetric in a ball $B(0,\eps)$, since $|w|<1/2$ and $|e^w-1|\leq 2|w| < 2\eps$.

Now since $U$ is harmonic in $B(0,\eps)$, we may write $U(w) = \Re f(w)$ for a function $f$ which is analytic in $B(0,\eps)$ (see Conway's classic text \cite{conway}, Chapter VIII, Theorem 2.2, p. 202 for a proof). We then express $f$ as a power series to obtain
\begin{equation}\label{eq:series-for-U} U(w) = \sum_{j \geq 0} \Re( a_j  w^j)\,.\end{equation}
	
	 We now write $w = \rho e^{i\t}$, for sufficiently small $\rho >0$, and use the fact that $U(w) = U(\bar{w})$ to obtain
	
	\[ 0 = U(w) - U(\bar{w}) = \sum_{j\geq 0} \Re(a_j\rho^j (e^{ij\t}  - e^{-ij\t})) =  \sum_{j\geq 0} \rho^j \Re(2ia_j)\sin(j\t) \,.
	\] By the uniqueness of trigonometric series, we have that $ \Re(2ia_j) = 0 $, implying that $a_j$ is real, for all $j\geq 0$. So from
	\eqref{eq:series-for-U} and the fact that $U(0) = 0$, we obtain \eqref{equ:USeries}.
		
	To prove the second part of the claim, we simply note that 
	\[ U_0(w) = u_0(e^{w}) = u(e^{w}) - \mu \log|e^{w}| = U(w) - \mu \Re(w).
	\] So (\ref{equ:USeries}) and the fact that $\mu = a_1$ yields (\ref{equ:U0Series}).\end{proof}

\vspace{4mm}

Throughout the paper we work with the sequence $(a_j)_{j\geq 1}$ rather than the cumulant sequence $(\kappa_j)_{j\geq 1}$.
We call the sequence $(a_j)_{j\geq 1}$ the \emph{normalized cumulant sequence}.

\subsection{Two important ``difference functions'' and $b$-decreasing}
For $\tau \in (0,\pi)$ and $b \geq 0$, we define the function $h_{\tau}$ that compares values of $u$ reflected about the line $\{te^{i\tau/2}: t \geq 0 \}$
\[ h_{\tau,b} (z) := u(z) - u(e^{i\tau} \bar{z} ) + b. 
\] We also define, for $\g \in (0,\pi)$, the closely related function
\[ \vp_{\g,b}(z) := u(z) - u(e^{i\g}z) + b. 
\] 

We first observe that if $u$ is harmonic in a sector, then $h_{\tau,b}$ and $\vp_{\g,b}$ are harmonic in a slightly smaller sector. 

\begin{lemma}\label{lem:h-phi-HarmInSector} For $R \in (1,\infty]$, $\delta >0$, let $u$ be a harmonic function in\footnote{Here we understand $S_{\infty}(\delta)$ to mean the sector $S(\delta)$.} $S_R(\delta)$. Then
for any $\tau,\g \in (0, \delta/2)$ we have that  
the functions $h_{\tau,b}$ and $\vp_{\g,b}$ are harmonic in the sector $S_R(\delta / 2)$.\end{lemma}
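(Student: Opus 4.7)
The plan is to reduce the claim to two standard facts: (i) harmonicity is preserved under addition of harmonic functions and additive constants, and (ii) harmonicity is preserved under pre-composition with a holomorphic or anti-holomorphic map (the rotation $z\mapsto e^{i\g}z$ and the reflection $z\mapsto e^{i\tau}\bar z$, respectively). Since $u(z)$ and $b$ contribute no trouble, the entire task is to show that the ``translated'' terms $u(e^{i\g}z)$ and $u(e^{i\tau}\bar z)$ are defined and harmonic on the smaller sector $S_R(\delta/2)$.

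For this I would verify directly that the maps $z\mapsto e^{i\g}z$ and $z\mapsto e^{i\tau}\bar z$ send $S_R(\delta/2)$ into $S_R(\delta)$. Both maps are isometries of $\C$, so they preserve $|z|$, and hence $|e^{i\g}z|$ and $|e^{i\tau}\bar z|$ lie in $[R^{-1},R]$ whenever $|z|$ does. For arguments, if $z\in S_R(\delta/2)$ then $\arg(z)\in[-\delta/2,\delta/2]$, giving $\arg(e^{i\g}z)=\arg(z)+\g\in[-\delta/2,\delta/2+\g]\subseteq[-\delta,\delta]$ (using $\g<\delta/2$), and similarly $\arg(e^{i\tau}\bar z)=\tau-\arg(z)\in[\tau-\delta/2,\tau+\delta/2]\subseteq[-\delta,\delta]$. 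Thus the images lie in $S_R(\delta)$, where $u$ is harmonic by hypothesis.

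It remains to argue that the compositions are themselves harmonic on $S_R(\delta/2)$. The rotation $z\mapsto e^{i\g}z$ is holomorphic, so $u(e^{i\g}z)$ is harmonic (composition of a harmonic function with a holomorphic map). The map $z\mapsto e^{i\tau}\bar z$ is anti-holomorphic, i.e.\ the conjugate of the holomorphic map $z\mapsto \overline{e^{i\tau}}z=e^{-i\tau}z$, so $u(e^{i\tau}\bar z)$ is also harmonic; alternatively one can check this in polar coordinates using $\Delta=\partial_r^2+r^{-1}\partial_r+r^{-2}\partial_\t^2$, since $\t\mapsto\tau-\t$ changes $\partial_\t^2$ by a sign in the first derivative but leaves the second derivative intact. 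Adding the harmonic function $u(z)$ and the constant $b$ preserves harmonicity, which completes the proof. No step here is genuinely difficult; the only thing to be careful with is the containment of sectors, which is exactly why the hypothesis $\tau,\g<\delta/2$ (rather than $<\delta$) is needed.
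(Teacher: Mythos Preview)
Your proof is correct and follows essentially the same approach as the paper's: both argue that harmonicity is preserved under rotations, conjugation, linear combinations, and addition of constants, and then check that the relevant maps $z\mapsto e^{i\g}z$ and $z\mapsto e^{i\tau}\bar z$ carry $S_R(\delta/2)$ into $S_R(\delta)$. The only cosmetic difference is that the paper computes the full domain of harmonicity of each translated term and then intersects, whereas you verify the sector containment directly; these are equivalent.
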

\begin{proof}
	We note that if $v$ is harmonic in $\Omega$ and $\beta \in \C \setminus \{ 0 \}$ then $v(\beta z)$ is harmonic in $\beta^{-1}\Omega$ and $v(\bar{z})$ is harmonic 
	in $\{ \bar{z} : z \in \Omega \}$. Also if $v_1,v_2$ are harmonic in $\Omega_1,\Omega_2$ respectively, then $v_1 - v_2$ is harmonic in $\Omega_1 \cap \Omega_2$
	(see \cite{harmonic-axler}, Chapter 1).
	Thus, $u(\bar{z})$ is harmonic in $S_R(\delta)$ and $u(\alpha \bar{z})$ is harmonic in $e^{i\tau}S_R(\delta) = S_R(-\delta +\tau,\delta+\tau)$ and therefore 
	$h_{\tau,b}$ is harmonic in $ S_R(-\delta +\tau,\delta+\tau) \cap S_R(\delta)  \supseteq S_R(\delta/2)$. Likewise, $\vp_{\g,b}$ is harmonic in 
	$S_R(\delta - \g,\delta+\g) \cap S_R(\delta) \supseteq S_R(\delta/2)$.
\end{proof}

\vspace{4mm}

We now arrive at an essential definition, which we have already mentioned in the overview of the proof: $b$-decreasing. For $b \geq 0$ and $\Omega \subseteq \C$, we say a function $u$ is \emph{$b$-decreasing} in $\Omega$ if $$u(\rho e^{i\theta_1}) - u(\rho e^{i \theta_2}) + b \geq 0$$
for all $0 \leq \theta_1 \leq \theta_2 \leq \pi$ with $\rho e^{i\theta_1}, \rho e^{i\theta_2} \in \Omega$. One nice feature of this definition is that $u$ is $b$-decreasing in $\Omega$ if and only if $u_0$ is, since 
\[ u_0(\rho e^{i\theta_1}) - u_0(\rho e^{i \theta_2}) + b = u(\rho e^{i\theta_1}) - u(\rho e^{i\theta_2}) + b .
\] The main motivation behind this definition is easy: it is the correct definition to guarantee the functions $h_{\tau,b}$ and $\vp_{\g,b}$ are positive, for all reasonable choices of $\tau$ and $\g$. Later we shall make heavy use of this fact by way of the so-called Harnack inequalities.  These inequalities guarantee that $h$ and $\vp$ don't vary too much on a set $\Omega$ away from its boundary.

\begin{lemma}\label{lem:h-phi-+veInSector} For $\delta >0$ and $b\geq 0$, let $\tau,\g \in (0, \delta/2)$ and let $u$ be $b$-decreasing and symmetric in $S_{R}(\delta)$. Then $h_{\tau,b}(z) \geq 0 $ for all $z \in S_R(\tau/2)$ and $\vp_{\g,b}(z) \geq 0$ for all $z \in S_R(-\g/2,\delta-\g)$. 
\end{lemma}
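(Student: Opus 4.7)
The statement is really just an unpacking of the definitions: after writing out $h_{\tau,b}$ and $\varphi_{\gamma,b}$ in polar form and using the symmetry $u(z) = u(\bar z)$ to fold negative arguments into the upper half-plane, the inequalities will follow directly from the $b$-decreasing hypothesis. The only thing to watch is that both points appearing on the right-hand side of $b$-decreasing sit inside $S_R(\delta)$ and have arguments in $[0,\pi]$.

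\textbf{The function $h_{\tau,b}$.} I would write $z = \rho e^{i\theta}$ with $\rho \in [R^{-1},R]$ and $|\theta| \le \tau/2$, so that
\[ h_{\tau,b}(z) = u(\rho e^{i\theta}) - u(\rho e^{i(\tau-\theta)}) + b. \]
Applying symmetry gives $u(\rho e^{i\theta}) = u(\rho e^{i|\theta|})$. Since $|\theta| \le \tau/2$, one checks that $|\theta| \le \tau - \theta$ (separating $\theta \ge 0$ and $\theta < 0$); also $\tau - \theta \le 3\tau/2 < \delta$, and $|\theta| < \delta$, so both points lie in $S_R(\delta)$ and both arguments lie in $[0,\pi]$. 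The $b$-decreasing property applied at radius $\rho$ with the angles $\theta_1 = |\theta|$ and $\theta_2 = \tau - \theta$ then yields $h_{\tau,b}(z) \ge 0$.

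\textbf{The function $\varphi_{\gamma,b}$.} Similarly, for $z = \rho e^{i\theta}$ with $\theta \in [-\gamma/2, \delta - \gamma]$ I would write
\[ \varphi_{\gamma,b}(z) = u(\rho e^{i\theta}) - u(\rho e^{i(\theta+\gamma)}) + b. \]
If $\theta \ge 0$ the angles $\theta \le \theta+\gamma$ are already ordered, both lie in $[0,\delta] \subset [0,\pi]$, and $b$-decreasing immediately gives the result. If $\theta \in [-\gamma/2, 0)$, I fold with symmetry to replace $u(\rho e^{i\theta})$ by $u(\rho e^{i|\theta|})$; since $\theta \ge -\gamma/2$ we have $2\theta + \gamma \ge 0$, i.e.\ $|\theta| \le \theta + \gamma$, and $\theta + \gamma \in [\gamma/2, \delta]$ so both points still lie in $S_R(\delta)$ with arguments in $[0,\pi]$. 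Again $b$-decreasing finishes the job.

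\textbf{Obstacle.} There isn't really a conceptual obstacle; the only mild point of care is bookkeeping the inequalities when $\theta < 0$, to ensure that after reflecting by symmetry the resulting pair of angles is correctly ordered and still sits in $S_R(\delta) \cap \{z:\arg z \in [0,\pi]\}$. The hypotheses $\tau,\gamma < \delta/2$ are calibrated precisely so that this works.
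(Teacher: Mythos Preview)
Your proof is correct and follows essentially the same approach as the paper's: write $z$ in polar form, expand $h_{\tau,b}$ (respectively $\varphi_{\gamma,b}$), use symmetry to flip any negative argument into $[0,\pi]$, and then apply the $b$-decreasing hypothesis after checking the two angles are ordered and stay inside $S_R(\delta)$. The paper splits the $h_{\tau,b}$ argument into the cases $\theta\ge 0$ and $\theta<0$ explicitly and simply says ``the proof for $\varphi_{\gamma,b}$ is similar,'' whereas you write out the $\varphi_{\gamma,b}$ case as well; otherwise the arguments are identical.
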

\begin{proof}
	Let $z = r e^{i\t} \in S_R(\tau/2)$ with $|\t|\leq \tau/2$.  Then $$h_{\tau,b}( r e^{i\t}) = u(r e^{i\t}) - u(r e^{i (\tau - \t)}) + b\,.$$
	
	In the case $\t \in [0,\tau/2]$, we have that $\theta \leq \tau - \t$, and thus non-negativity of $h_{\tau,b}(r e^{i\t})$ follows from the $b$-decreasing assumption on $u$.  On the other hand, if $\t \in [-\tau/2,0]$, we use symmetry to write 
	$$h_{\tau,b}( r e^{i\t}) = u(r e^{i\t}) - u(r e^{i (\tau - \t)}) + b =   u(r e^{-i\t}) - u(r e^{i (\tau - \t)}) + b$$
	with $0 \leq - \t \leq \tau - \t \leq 2\tau \leq \delta$; non-negativity again follows from $b$-decreasing. The proof for $\varphi_{\gamma,b}$ is similar.
\end{proof}

It will be important for us that Lemmas~\ref{lem:h-phi-HarmInSector} and \ref{lem:h-phi-+veInSector} tell us that 
both $h_{\g,b}$ and $\vp_{\g,b}$ are harmonic and positive in a sector that contains the positive real axis with room to spare on both sides. This means that we can work near $1 \in \C$, without getting too close to the boundary.

\section{Weakly positive and harmonic implies $b$-decreasing } \label{sec:b-decreasing}

In this section we prove Lemma~\ref{lem:b-decreasing}, our main tool for showing that a function is $b$-decreasing.

\begin{lemma} \label{lem:b-decreasing} For $\delta \in (0,\pi)$ and $R > r > 0$ the following hold. Let $u$ be a weakly-positive, symmetric, harmonic function on a neighborhood of $S_R(\delta)$. If $b \geq 0$ is such that 
\begin{equation} \label{equ:bcondition}\left(\frac{r}{R} \right)^{1/\delta} \max_{z \in S_R^{\ast}(\delta) } |u(z)| \leq 3b/8, \end{equation}
then $u$ is $b$--decreasing on $S_{r}(\delta/2)$. 
 \end{lemma}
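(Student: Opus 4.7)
The plan is to reduce the problem to showing $\varphi_{\gamma, b}(\rho e^{i\theta_1}) \ge 0$ for each choice of $0 \le \theta_1 \le \theta_2 \le \delta/2$ and $\rho$ with $|\log \rho| \le \log r$, where $\gamma := \theta_2 - \theta_1 \in [0, \delta/2]$; this is precisely the $b$-decreasing inequality at that triple. The crucial structural observation is that the symmetry $u(\bar z) = u(z)$ forces $\varphi_{\gamma, b}(z) \equiv b$ on the ray $\arg z = -\gamma/2$, so that $\Psi(z) := \varphi_{\gamma, b}(z) - b$ is harmonic on $S_R(-\delta, \delta - \gamma)$, vanishes identically on this ray, and is odd under reflection across it.

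The approach is a quantitative maximum-principle argument aiming at the bound $|\Psi(z_0)| \le \tfrac{8}{3}(r/R)^{1/\delta} M$ for $z_0 \in S_r(\delta/2)$, since, combined with the hypothesis $(r/R)^{1/\delta} M \le 3b/8$, this yields $|\Psi(z_0)| \le b$ and hence $\varphi_{\gamma, b}(z_0) \ge 0$. After passing to the exponential scale, $S_R(-\delta, \delta - \gamma)$ becomes a rectangle of dimensions $2\log R \times (2\delta - \gamma)$, and the standard harmonic-measure estimate for such a long, thin rectangle (obtained by conformal uniformization) shows that the harmonic measure of the vertical short ends (the image of the arcs $|z| \in \{R^{-1}, R\}$) at a point $z_0$ with $|\log z_0| \le \log r$ is $O((r/R)^{1/\delta})$: this is exactly the source of the exponential factor in the conclusion. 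On the arcs the hypothesis directly gives $|\Psi| \le 2M$, and on the symmetry ray $\Psi = 0$, so these two pieces contribute the desired order of magnitude to $\Psi(z_0)$.

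The main obstacle is the two straight edges $\arg z = -\delta$ and $\arg z = \delta - \gamma$ (the horizontal edges of the rectangle in the exponential scale), which sit on $\partial S_R(\delta)$ and where $|u|$ is a priori uncontrolled by the hypothesis. The remedy is to exploit the oddness of $\Psi$ together with $u$-symmetry: these together force $\Psi(\rho e^{i(\delta - \gamma)}) = -\Psi(\rho e^{-i\delta})$ at matching radii, so that in the harmonic-measure representation $\Psi(z_0) = \int_{\partial} \Psi\, d\omega_{z_0}$ the contributions of the top and bottom edges combine into a single integral against the \emph{difference} $d\omega_T(z_0) - d\omega_B(z_0)$ of harmonic measures; writing this difference as $d\omega_T(z_0) - d\omega_T(\sigma z_0)$, where $\sigma$ is the reflection about the symmetry ray, one recognises it as the harmonic measure of the top edge for a harmonic function that vanishes on the bottom edge and the symmetry line, and a second application of the annular-sector harmonic-measure estimate shows it is again of order $(r/R)^{1/\delta}$. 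Combining the arc and cancelled edge contributions (along with weak positivity, which yields $u \le M$ throughout $S_R(\delta)$ via the strong maximum principle and is needed to bound the ``bounded part'' of $\Psi$ on the edges) produces $|\Psi(z_0)| \le \tfrac{8}{3}(r/R)^{1/\delta} M$; carrying the numeric constants through with care is what produces the factor $3/8$ in the hypothesis and constitutes the bulk of the technical bookkeeping.
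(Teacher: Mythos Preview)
Your proposal has a genuine gap in the treatment of the straight edges. The oddness of $\Psi=u(z)-u(e^{i\gamma}z)$ across the ray $\arg z=-\gamma/2$ does relate the top and bottom edge contributions, but it does \emph{not} make them small. Concretely, the signed measure $d\omega_T(z_0)-d\omega_T(\sigma z_0)$ has total variation of order $1$, not $(r/R)^{1/\delta}$: its total mass $\omega^D_{z_0}(T)-\omega^D_{\sigma z_0}(T)$ equals $\omega^{D^+}_{z_0}(T)$ for the half-domain $D^+=S_R(-\gamma/2,\delta-\gamma)$, and this probability is bounded away from zero when $\theta_1$ is comparable to $\delta$. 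Since $\Psi$ on the top edge equals $u(\rho e^{i(\delta-\gamma)})-u(\rho e^{i\delta})$, which is not controlled by $M=\max_{S_R^\ast(\delta)}|u|$ (the hypothesis bounds $|u|$ only on the arcs, and weak positivity gives only the one-sided bound $u\le M$, not $|u|\le M$), the integral $\int_T\Psi\,d\omega^{D^+}_{z_0}$ cannot be bounded this way.

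The paper avoids this difficulty by using a \emph{different} difference function, namely $H(z)=u(z)-u(e^{i\phi}\bar z)$ with $\phi=\theta_1+\theta_2$ rather than your $\gamma=\theta_2-\theta_1$. This $H$ agrees with your $\Psi$ at the point $z_0=\rho e^{i\theta_1}$ of interest, but has a decisive advantage as a function: it vanishes on the line $\arg z=\phi/2$ \emph{and} satisfies $H\ge 0$ on the positive real axis, directly from weak positivity. Working on the half-sector $S_R(0,\phi/2)$, both straight edges are therefore controlled (one by sign, one by vanishing), and only the arcs remain; their harmonic measure is $O((r/R)^{1/\delta})$ and $|H|\le 2M$ there. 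Equivalently, the paper phrases this as a Brownian reflection coupling across $\arg z=\phi/2$: when the two paths merge the contribution is zero, when they hit $\R_{>0}$ weak positivity gives a nonnegative contribution, and only the arc-hitting event contributes negatively. The essential idea you are missing is that the reflection should be chosen so that one of the resulting straight boundary pieces is the positive real axis, since that is the only place where weak positivity yields a usable sign.
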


To prove Lemma~\ref{lem:b-decreasing}, we use a well-known connection between harmonic functions and Brownian motion. The following theorem is a special case of Theorem 3.12 from the book of M\"{o}rters and Peres \cite{peres} and shows how Brownian motion can be used to recover a harmonic function from its boundary values.

\begin{theorem}[Theorem 3.12 of \cite{peres}]\label{thm:harmonic-and-b-motion}
Let $v$ be a function which is harmonic on a bounded, convex set $\Omega \subseteq \C$ and continuous on $\p \Omega$, let $z \in \Omega$ and let $(B_t)_{t\geq 0}$ be a Brownian motion started at $z$. If we define the stopping time $\tau := \min \{ t : B_t \in \partial \Omega \}$ then we have 
\[ v(z) = \EE\, v(B_{\tau}). \] \end{theorem}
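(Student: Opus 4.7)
The plan is to recast the $b$-decreasing property as a pointwise positivity statement for the reflection-difference function $h_{\tau,b}(z) = u(z) - u(e^{i\tau}\bar z) + b$, and then prove that positivity via the Brownian motion formula of Theorem~\ref{thm:harmonic-and-b-motion} combined with a Phragm\'en--Lindel\"of-style harmonic measure estimate. Given $\rho e^{i\theta_1}, \rho e^{i\theta_2} \in S_r(\delta/2)$ with $0 \leq \theta_1 \leq \theta_2$, set $\tau = \theta_1 + \theta_2 \in [0,\delta]$ and $z_0 = \rho e^{i\theta_1}$; since $e^{i\tau}\overline{z_0} = \rho e^{i\theta_2}$, the desired inequality $u(\rho e^{i\theta_1}) - u(\rho e^{i\theta_2}) + b \geq 0$ is precisely $h_{\tau,b}(z_0) \geq 0$. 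Throughout, write $M := \max_{z \in S_R^{\ast}(\delta)} |u(z)|$, so that the hypothesis reads $M (r/R)^{1/\delta} \leq 3b/8$.

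Arguing as in Lemma~\ref{lem:h-phi-HarmInSector}, $h_{\tau,b}$ is harmonic on
$\Omega_\tau := \{z : |z| \in [R^{-1},R],\ \arg z \in [\tau-\delta,\delta]\}$,
a domain symmetric about the ray $\arg = \tau/2$; a direct computation yields the identity $h_{\tau,b}(z) + h_{\tau,b}(e^{i\tau}\bar z) = 2b$, so that $\tilde h := h_{\tau,b} - b$ is antisymmetric under the reflection $z \mapsto e^{i\tau}\bar z$ and vanishes on the axis $L = \{\arg z = \tau/2\}\cap \Omega_\tau$. Apply Theorem~\ref{thm:harmonic-and-b-motion} to $h_{\tau,b}$ on $\Omega_\tau$ with Brownian motion $(B_t)$ started at $z_0$ and first exit time $\tau'$, and decompose $\partial\Omega_\tau = E \cup \Gamma$, where $E$ is the union of the two circular end-arcs at $|z| = R^{\pm 1}$ and $\Gamma = \Gamma_+ \cup \Gamma_-$ consists of the two radial segments at $\arg z \in \{\tau-\delta,\delta\}$:
\[
h_{\tau,b}(z_0) = \EE\bigl[h_{\tau,b}(B_{\tau'})\id_E\bigr] + \EE\bigl[h_{\tau,b}(B_{\tau'})\id_\Gamma\bigr].
\]

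On $E$ we have $|h_{\tau,b}| \leq 2M + b$, and the hitting probability $\PP_{z_0}(B_{\tau'}\in E)$ is estimated by applying Theorem~\ref{thm:harmonic-and-b-motion} to an explicit positive harmonic barrier on $\Omega_\tau$, for instance $\Phi(z) := \cos(\arg(z)/\delta)\bigl(|z|^{1/\delta} + |z|^{-1/\delta}\bigr)$, which is positive and harmonic on $S_R(\delta)$, of order $R^{1/\delta}$ on $E$ and of order $r^{1/\delta}$ at $z_0$: this yields $\PP_{z_0}(B_{\tau'}\in E) \leq C(r/R)^{1/\delta}$ for an absolute constant $C$. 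For the rays, the reflection identity combined with the reflection-symmetry of $\Omega_\tau$ shows that the expected value of $h_{\tau,b}(B_{\tau'})$ on $\Gamma$ equals $b\cdot\PP_{z_0}(B_{\tau'}\in\Gamma)$ plus an antisymmetric correction; this correction is bounded by running a second Brownian motion argument for $\tilde h$ on the half-sector $\Omega_\tau^- := \{z\in\Omega_\tau : \arg z\leq \tau/2\}$, where $\tilde h$ vanishes on $L$ and satisfies $|\tilde h|\leq 2M$ on the end-arc portion of $\partial\Omega_\tau^-$; a further comparison with $\Phi$ propagates the probability bound $(r/R)^{1/\delta}$ on the end-arcs into a bound of order $M(r/R)^{1/\delta}$ on $|\tilde h(z_0)|$. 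Assembling these estimates gives $h_{\tau,b}(z_0) \geq b - C'(M + b)(r/R)^{1/\delta} \geq 0$ under the hypothesis, after a suitable choice of constants.

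The principal obstacle is the rays contribution: weak-positivity only gives $u(z) \leq u(|z|)$, with no lower bound for $u$ on the rays $\arg z = \pm\delta$, so no crude pointwise estimate on $u$ along $\Gamma$ is available. The resolution is to exploit the antisymmetry of $\tilde h$ under reflection across $\arg = \tau/2$, which both forces cancellations between $\Gamma_+$ and $\Gamma_-$ in the Brownian motion expectation and furnishes a second harmonic function, namely $\tilde h$ on $\Omega_\tau^-$ with zero boundary values on $L$, to which the Phragm\'en--Lindel\"of comparison with $\Phi$ can be applied directly; this converts the small harmonic measure of the end-arcs into a small bound on $|\tilde h|$ at $z_0$ and hence on the rays contribution, producing the key $(r/R)^{1/\delta}$ factor in the hypothesis.
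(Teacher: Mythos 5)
Your proposal does not prove the stated theorem at all. Theorem~\ref{thm:harmonic-and-b-motion} is the classical assertion that for $v$ harmonic on a bounded convex domain $\Omega$ and continuous up to $\p\Omega$, one has $v(z)=\EE\,v(B_{\tau})$ at the first exit time $\tau$; in the paper this is quoted from M\"orters and Peres. What you have written is instead a proof sketch of Lemma~\ref{lem:b-decreasing} (weakly-positive and harmonic implies $b$-decreasing), and in the course of it you \emph{invoke} Theorem~\ref{thm:harmonic-and-b-motion} twice as a black box. As a proof of the theorem itself this is circular. A correct argument for the actual statement would note that harmonicity of $v$ makes $v(B_t)$ a local martingale (by It\^o's formula the drift term is $\tfrac12\Delta v\,dt=0$), that boundedness of $\Omega$ makes $\tau$ almost surely finite with $v(B_{t\wedge\tau})$ bounded on compact subsets exhausting $\Omega$, and then pass to the limit using continuity of $v$ on $\overline\Omega$ and dominated convergence; alternatively one can argue from the mean value property together with the strong Markov property. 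None of these ingredients appears in your write-up.

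Even read as an attempt at Lemma~\ref{lem:b-decreasing}, the sketch has a gap at exactly the point you flag as the ``principal obstacle.'' Your antisymmetric function $\tilde h$ on the half-sector $\Omega_\tau^{-}$ vanishes on the axis $L$ and is bounded by $2M$ on the end-arcs, but its remaining boundary piece is the ray $\arg z=\tau-\delta$, on which you have no bound of either sign; the comparison with the barrier $\Phi$ therefore cannot close. The paper avoids this by reflecting across the bisector of the sector $S_R(0,\delta)$ whose \emph{lower} boundary is the positive real axis: there weak-positivity gives the one-sided inequality $u(B_{\tau})-u(\alpha B_{\tau})\geq 0$ directly (Lemma~\ref{lem:IntFormForDiff}), so no magnitude estimate on that boundary piece is ever needed, and only the end-arcs contribute the $(r/R)^{1/\delta}$ error, via the box-crossing estimate of Lemma~\ref{lem:poisson-sector}.
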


\vspace{4mm}

In what follows, we will understand $\tau$ to be the stopping time of a Brownian motion hitting the boundary of $\Omega$, 
$\tau := \min\{ t : B_t \in \p \Omega \}$, unless otherwise stated.

\subsection{A calculation for Lemma~\ref{lem:b-decreasing}}
To Prove Lemma~\ref{lem:b-decreasing}, we need an estimate on the probability that a Brownian motion hits one of the ends of a sector $S_R(\delta)$ before
hitting the sides.

\begin{lemma} \label{lem:poisson-sector} For $\delta \in (0,\pi)$ and $R > r >0$, let $z \in S_r(\delta)$ and $(B_t)_{t\geq 0}$ be a Brownian motion started at 
$z$ and stopped when it hits $\p S_{R}(\delta)$. We have 
\begin{equation} \label{eq:poisson-sector}
\PP\left( B_{\tau} \in S^{\ast}_R(\delta) \right)  \leq  \frac{4}{3} \left(\frac{r}{R} \right)^{c/\delta}\,,
\end{equation} where $c = \log 4/3$.
\end{lemma}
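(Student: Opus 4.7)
My plan is to transfer the problem to a rectangle via the conformal map $\phi(z) = \log z$, which biholomorphically sends $S_R(\delta)$ onto $\mathcal{R} := [-\log R, \log R] + i[-\delta, \delta]$, carrying the ends $S_R^*(\delta)$ to the two vertical edges $\{|x| = \log R\}$, the radial boundary $\{\arg z = \pm \delta\}$ to the two horizontal edges, and $S_r(\delta)$ to the sub-rectangle $\{|x| \leq \log r, |y| \leq \delta\}$. Conformal invariance of planar Brownian motion (up to a time change, which does not alter the exit distribution on the boundary) reduces the statement to: for planar BM in $\mathcal{R}$ started at $(x_0, y_0)$ with $|x_0| \leq \log r$ and $|y_0| \leq \delta$, bound the probability of exit through the two vertical edges.

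The core of the proof is a one-step harmonic-majorant estimate. Take $a = 1/\delta$ and consider $h(x, y) := e^{ax}\cos(ay)/(e^{a\delta}\cos(a\delta))$, the real part of $e^{a(x+iy)}$ up to normalization. On $\{x = \delta, |y| \leq \delta\}$ one has $h \geq 1$ by the monotonicity of $\cos$ on $[0, \pi/2]$; on $\{|y| = \delta, x \leq \delta\}$ one has $h \geq 0$. Applying Theorem~\ref{thm:harmonic-and-b-motion} to suitably truncated finite rectangles and passing to the limit, together with the numerical inequality $e \cos 1 \geq 4/3$, yields: for BM starting at $(x_0, y_0)$ with $x_0 \leq 0$ and $|y_0| \leq \delta$, the probability of reaching $\{x = \delta\}$ before $\{|y| = \delta\}$ is at most $1/(e \cos 1) \leq 3/4$. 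By the strong Markov property applied at the successive first-passage times of the vertical lines $x = \log r + j\delta$, iterating this one-step bound $k := \lfloor \log(R/r)/\delta \rfloor$ times gives that BM started in the sub-rectangle reaches $\{x = \log R\}$ before $\{|y| = \delta\}$ with probability at most $(3/4)^k$; since $k \geq \log(R/r)/\delta - 1$, this simplifies to $(3/4)^k \leq (4/3)(r/R)^{c/\delta}$ with $c = \log(4/3)$. The opposite vertical edge $\{x = -\log R\}$, corresponding to the inner arc $\{|z| = R^{-1}\}$, is handled identically using the symmetry $x \mapsto -x$ of $\mathcal{R}$ (equivalently $z \mapsto 1/z$ preserving $S_R(\delta)$ and $S_r(\delta)$).

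The main obstacle I anticipate is recovering the precise prefactor $4/3$ in the statement: a naive union bound over the two vertical edges of the one-sided iteration produces $8/3$, so some refinement is needed, either a sharper one-step estimate (for instance using the optimal $a = \pi/(4\delta)$, which gives the better per-step constant $\sqrt{2}/e^{\pi/4} < 3/4$) or a combined harmonic majorant of the form $\cosh(ax)\cos(ay)$ that handles both edges simultaneously with a more careful optimization of $a$ together with the exact bound $\cosh(a \log r)/\cosh(a \log R) \leq (r/R)^a$ when the ratio is considered for $r \geq 1$. The remaining ingredients — conformal invariance of Brownian motion, applicability of Theorem~\ref{thm:harmonic-and-b-motion} to the relevant bounded convex domains, the passage from truncated to infinite half-strips, and the boundary inequalities for the harmonic majorant — are routine.
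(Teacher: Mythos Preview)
Your overall strategy---reduce to the rectangle via $\log$ and iterate a one-step crossing estimate through the strong Markov property---is exactly the paper's approach. The only substantive divergence is in the one-step estimate itself, and this is precisely where your anticipated prefactor trouble comes from and where your proposed remedies fail.

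The paper's one-step estimate (Observation~\ref{obs:poisson-square}) is \emph{two-sided}: for Brownian motion started anywhere on the vertical bisector of the $2\delta\times 2\delta$ square, the probability of exiting through the left \emph{or} right edge is at most $3/4$. It is proved by an elementary coupling: for $y\geq 0$ the translated path $B_t+iy$ hits the top edge whenever $B_t$ does, so the top edge alone carries probability $\geq 1/4$. Because this single step already tracks both horizontal directions, the iteration (Lemma~\ref{lem:poisson-rectangle}) bounds the exit probability through \emph{either} end of the rectangle by $(3/4)^{\ell}$ with no union bound: to reach $\{|x|=b\}$ from $\{|x|\leq a\}$ the path must make $\ell=\lfloor (b-a)/\delta\rfloor$ consecutive successful square-exits, each of conditional probability $\leq 3/4$.

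Your one-sided harmonic-majorant estimate forces the union bound, and neither of your fixes recovers the $4/3$. Sharpening $a$ to $\pi/(4\delta)$ improves the per-step constant $p$ but after using $k\geq \log(R/r)/\delta -1$ the prefactor becomes $2/p>2$, not $4/3$. The $\cosh$ route also fails because the inequality you invoke, $\cosh(a\log r)/\cosh(a\log R)\leq (r/R)^{a}$ for $1\leq r<R$, is false: cross-multiplying reduces it to $(R/r)^{2a}\leq 1$. The valid bound $\cosh(u)/\cosh(v)\leq 2e^{u-v}$ for $0\leq u\leq v$ reintroduces exactly the factor of $2$ you are trying to eliminate. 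The simplest repair is the paper's: replace your one-sided majorant step by the symmetric square-crossing bound above and iterate two-sidedly.
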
 

We should mention that there is an exact formula for the probability in \eqref{eq:poisson-sector} and a proof of this can be found Theorem 7.25 in \cite{peres}.
Here we include a simple proof of this weaker result, which is still sharp up to the constant $c$, to give the reader an intuitive feel for why we have the
exponential dependence in \eqref{eq:poisson-sector} on $\delta$. This dependence is quite important and ultimately explains the logarithmic factor that 
appears in Theorem~\ref{thm:logn} and Corollary~\ref{cor:ball-CLT}. 

Our first step towards Lemma~\ref{lem:poisson-sector} is to study a similar situation in a square. We shall then extend this to rectangles, and then use the conformal invariance of Brownian motion to finish the proof for sectors. 

\begin{observation}\label{obs:poisson-square}
For $\delta >0$ and $y \in [-\delta,\delta]$, let $E_y$ be the event that a Brownian motion, started at $iy \in \C$,
hits either the left of right edges of the square $S := \{ z : \Re(z) \leq \delta, \im(z) \leq \delta \}$ before the top or bottom edges. Then $\PP(E_y) \leq 3/4$.
\end{observation}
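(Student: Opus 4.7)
The plan is to reduce the planar problem to a comparison of two independent one-dimensional hitting times. Decomposing $B_t = X_t + i Y_t$, the two coordinates are independent standard 1D Brownian motions with $X_0 = 0$ and $Y_0 = y$. Setting $\tau_X := \inf\{t : |X_t| = \delta\}$ and $\tau_Y := \inf\{t : |Y_t| = \delta\}$, the event $E_y$ (that $B_t$ exits through a vertical edge of $S$ before a horizontal one) is exactly $\{\tau_X < \tau_Y\}$.

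The key step is the stochastic comparison $\tau_Y \leq_{\mathrm{st}} \tau_Y^{(0)}$, where $\tau_Y^{(0)}$ denotes the analogous exit time of $(-\delta,\delta)$ by a 1D Brownian motion started at $0$. Since $|y| \leq \delta$, the starting point $y$ is no farther from $\{-\delta,\delta\}$ than $0$ is, so a Brownian motion started there exits the interval sooner in the stochastic order. This can be justified either by the fact that the Dirichlet heat kernel on $(-\delta,\delta)$ is maximized (in its spatial argument) at the midpoint, or via a pathwise reflection coupling of the two Brownian motions about $y/2$ (run them independently until they meet, and couple them afterwards). Granting the comparison, independence of $\tau_X, \tau_Y$ together with $\tau_X \stackrel{d}{=} \tau_Y^{(0)}$ gives
\[
\PP(E_y) \;=\; \PP(\tau_X < \tau_Y) \;\leq\; \PP(\tau_X < \tau_Y^{(0)}) \;=\; \tfrac12,
\]
where the last equality holds by continuity and symmetry of the i.i.d.\ pair $(\tau_X, \tau_Y^{(0)})$. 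In particular $\PP(E_y) \leq 3/4$, as required.

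The only delicate point is the justification of the stochastic ordering, which is the step I would write out most carefully, most naturally via the reflection coupling just sketched. Everything else is elementary. Note that the method actually yields the sharper bound $1/2$; the weaker value $3/4$ stated in the observation is retained because it is exactly what produces the exponent $c = \log(4/3)$ used downstream in Lemma~\ref{lem:poisson-sector} and ultimately in the $(r/R)^{c/\delta}$ estimate driving Lemma~\ref{lem:b-decreasing}.
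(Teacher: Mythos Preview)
Your argument is correct and in fact yields the sharper bound $\PP(E_y)\le 1/2$, but it proceeds quite differently from the paper. The paper stays in two dimensions: for $y\ge 0$ it couples the motion started at $iy$ with the one started at $0$ by a rigid vertical translation, observes that the translated path hits the top edge whenever the original does, and concludes $\PP(H_y)\ge\PP(H_0)=1/4$ from the fourfold symmetry at the centre, whence $\PP(E_y)\le 1-\PP(H_y)\le 3/4$. Your route instead factors the exit problem into two independent one-dimensional exit times and reduces everything to the monotonicity of the exit time of $(-\delta,\delta)$ in the distance of the starting point from the centre. That is slightly more machinery, but it buys you the optimal constant $1/2$ and hence would upgrade the exponent in Lemma~\ref{lem:poisson-sector} from $\log(4/3)$ to $\log 2$.

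One small correction: your parenthetical ``run them independently until they meet'' is not the reflection coupling and does not yield a pathwise inequality, since under independent evolution the lower path may exit through $-\delta$ before the upper one exits at all. The coupling you named is the right one: set $Y_t^{(0)}=y-Y_t$ until the two meet at $y/2$, and $Y_t^{(0)}=Y_t$ thereafter; this does give $\tau_Y\le\tau_Y^{(0)}$ almost surely. Your alternative justification via the survival probability $y\mapsto\PP_y(\tau>t)$ being maximised at the midpoint is also perfectly fine.
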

\begin{proof}
First, for $y \in [0,\delta] $, we consider the event $H_y$; that a Brownian motion, started at $iy$, hits the top edge of $S$ before hitting any other edge. We claim 
$\PP(H_y) \geq 1/4$. If $y=0$ the result is clear by symmetry. If $y >0$, then we couple the Brownian motion $(B_t)_t$ started at $0$ with a Brownian motion $(B_t+iy)_t$ started at $iy \in S$: clearly $B_t+iy$ will hit the top edge of $S$ on every trajectory that $B_t$ does. So $\PP(H_y) \geq \PP(H_0) =1/4$. Now, turning to $E_y$, simply note that if $y \geq 0$ then $\PP(E_y) \leq 1-\PP(H_y) \leq 3/4$. The case $y<0$, follows by symmetry.\end{proof}

\vspace{4mm}

It is now easy to deduce a version of Lemma~\ref{lem:poisson-sector} for rectangles. Here we see quite naturally where the exponential dependence on $\delta$ appears.

\begin{lemma}\label{lem:poisson-rectangle}
	For $\delta > 0$ and $b > a > 0$, let $Q:=\{ z : |\Re(z)| < b, |\im(z)| < \delta \}$, let $z \in Q$ and let $(B_t)_{t\geq 0}$ be a Brownian motion started at $z$
	which is stopped when it hits $\p Q$. We have \begin{equation}\label{eq:poisson-rectangle}
	\PP \left( B_{\tau} \in Q^{\ast} \right) \leq \exp\left(-c\left\lfloor \delta^{-1}(b-a) \right\rfloor \right), \end{equation}
	where $Q^{\ast} = \{ z : |\Re(z)| = b, |\im(z)| < \delta \}$ and $c = \log 4/3$.
\end{lemma}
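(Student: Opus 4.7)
The plan is to prove Lemma~\ref{lem:poisson-rectangle} by iterating Observation~\ref{obs:poisson-square} along the horizontal direction via the strong Markov property for Brownian motion. Throughout I assume the implicit hypothesis $|\Re(z)| \leq a$; without it the bound cannot hold (for starting points near $Q^{\ast}$ the probability approaches $1$).

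Set $N := \lfloor \delta^{-1}(b-a) \rfloor$. The one-step building block is the following: at any stopping time when the Brownian motion sits at a point $p$ with $|\im(p)| \leq \delta$ such that the $2\delta \times 2\delta$ square $R_p := [\Re(p) - \delta,\, \Re(p) + \delta] \times [-\delta,\,\delta]$ is contained in $Q$, the point $p$ lies on the vertical midline of $R_p$, and a translated copy of Observation~\ref{obs:poisson-square} gives that the first exit of $R_p$ occurs through its left or right edge with probability at most $3/4$. Complementarily, with probability at least $1/4$ the first exit is through the top or bottom of $R_p$, which lies in $\partial Q \setminus Q^{\ast}$, killing the motion outside $Q^{\ast}$.

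Next I introduce the reflected process $D_t := |\Re(B_t) - \Re(z)|$ and its sampled chain $Y_n := D_{\sigma_n}/\delta$, where the stopping times $\sigma_n$ are defined so that $Y_n$ first reaches each successive nonnegative integer. By reflecting symmetry of $\Re(B_t)$ about its starting value $\Re(z)$ together with the one-step estimate, $(Y_n)$ is a Markov chain on $\Z_{\geq 0}$ with per-step survival probability at most $3/4$, reflecting behavior at state $0$, and symmetric nearest-neighbor transitions otherwise. Since $\{B_\tau \in Q^{\ast}\}$ implies $D_\tau \geq b - |\Re(z)| \geq b - a$, the chain must reach state $N$. Solving the resulting first-passage recurrence (a linear recurrence with two reciprocal characteristic roots, together with the reflection condition at $0$) yields $\Pr(Y_n = N \text{ for some } n) \leq (3/4)^N = e^{-cN}$ with $c = \log(4/3)$, completing the proof.

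The main subtlety to handle carefully is the two-sided nature of $Q^{\ast}$: a naive union bound over exit through $\Re = +b$ versus $\Re = -b$ would lose a factor of $2$. The displacement-chain formulation folds both sides into a single chain on $\Z_{\geq 0}$, so the reflection boundary at state $0$ — corresponding to excursions of the horizontal component across its starting point — is absorbed into the linear recurrence and does not produce any spurious constant. A minor bookkeeping issue, that near $\Re = \pm b$ the square $R_p$ may slightly overrun $Q$, is handled by stopping the chain the first time it reaches state $N$.
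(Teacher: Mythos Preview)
Your approach is correct and shares the same core idea as the paper's proof: iterate Observation~\ref{obs:poisson-square} via the strong Markov property. The paper argues more directly, without the reflecting-chain formalism: to exit through $Q^{\ast}$ the Brownian motion must survive at least $N=\lfloor (b-a)/\delta\rfloor$ successive box-crossings, since each crossing moves $\Re(B_t)$ by exactly $\pm\delta$, so $|\Re(B_{\tau_k})-\Re(z)|\le k\delta$, while exiting forces $|\Re(B_\tau)-\Re(z)|\ge b-a$; each crossing survives with probability $\le 3/4$, giving $(3/4)^N$. Your reflecting chain records precisely the same sequence of box-crossings, so the ``two-sided nature of $Q^{\ast}$'' was never in danger of costing a factor of $2$ --- the paper handles both ends simultaneously just as you do, without any union bound.

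Two remarks on your write-up. First, invoking the first-passage recurrence is overkill and slightly misleading: the characteristic roots are $(4\pm\sqrt7)/3$, not $4/3$ and $3/4$, and while the resulting hitting probability $2/(x_1^N+x_2^N)$ is indeed $\le(3/4)^N$ (by convexity of $t\mapsto t^N$, since $\tfrac{3}{4}x_1+\tfrac{3}{4}x_2=2$), you do not need this --- the chain must take at least $N$ steps to reach state $N$, and each step survives with probability $\le 3/4$, which is exactly the paper's one-line argument. Second, your assertion that $(Y_n)$ is a Markov chain is not strictly correct, since the per-step survival probability depends on $\im(B_{\sigma_n})$, which $Y_n$ does not record; what you actually need (and what the paper uses implicitly) is domination by the worst case $3/4$ uniformly over that hidden coordinate.
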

\begin{proof}
	 Let $E_y$ be the event defined in Observation~\ref{obs:poisson-square} and let $S(z')$ be the event that a Brownian motion, started at $z' \in Q$, hits one of the lines $\{z' + \delta + it \}_{t\in \R}$, $\{z' - \delta + it \}_{t\in \R}$ before hitting the top or bottom of $Q$. Clearly $\PP(S(z')) = \PP(E_{y})$, where $z' = x+iy$. 
	 
We now connect the rectangle-crossing to the box-crossings; a path that hits one of the ends before hitting the top or bottom of $Q$ must cross at least $\ell \geq \lfloor (b-a)/\delta \rfloor$ boxes without hitting the top or bottom on $Q$. Therefore we have 

\begin{equation}\label{eq:box-bound} \PP\left( B_{\tau} \in Q^{\ast} \right) \leq \sup_{z_1,\ldots,z_{\ell}} \PP\left( S(z_1) \cap \cdots \cap S(z_{\ell})\right) \leq \left( \sup_{y} \PP(E_y) \right)^{\ell}, 
\end{equation} where the supremum is over all complex numbers contained in $Q$ satisfying $z_1 = z$ and $|\Re(z_i) - \Re(z_{i+1})| = \delta$ and the second inequality in \eqref{eq:box-bound} follows from the Markov property of Brownian motion.  
Finally, the result follows by applying Observation~\ref{obs:poisson-square}. 
\end{proof}

\vspace{4mm}

To prove Lemma \ref{lem:poisson-sector} we simply use an analytic map to transform the rectangle $Q$ into the truncated sector $S_{R}(\delta)$. The conformal invariance 
of Brownian motion allows us to finish. To state this property of Brownian motion a little more carefully, let $\phi : \C \rightarrow \C$ be an analytic function 
and let $(W_t)_{t}$ be a Brownian motion in $\C$. The conformal invariance of Brownian motion means that $\phi(W_t)$ traces the path of a Brownian motion, at (possibly) a different speed. In other words, there exists a increasing function $\g : \R_{\geq 0} \rightarrow \R_{\geq 0}$, a Brownian motion $(B_t)$ and a coupling of $B_t$ and $W_t$ for which $\phi(W_t) = B_{\g(t)}$. See the book of M\"{o}rters and Peres \cite{peres}, Theorem 7.20, for a proof.

\vspace{4mm}

\noindent\emph{Proof of Lemma \ref{lem:poisson-sector}:}  
Set $b = \log(R)$, $a = \log(r)$ and observe that the analytic function $\phi(z) := e^z$ maps the rectangle $Q = \{z :  |\Re(z)| < b, | \im(z)|< \delta \}$ to the truncated sector $S_{e^b}(\delta) = S_R(\delta)$; maps $Q = \{z :  |\Re(z)| < a, | \im(z)|< \delta \}$ to $S_r(\delta)$; and maps the ends $R^{\ast} = \{z :  |\Re(z)| = b , | \im(z)|< \delta\}$ to the ends $S_R^{\ast}(\delta)$. 

To finish, choose $w \in \{ s : |\im(s)| < \delta , |\Re(s)| < b ,  \}$ so that $\phi(w) = z$, let $(W_t)_{t\geq 0}$ be a Brownian motion started at $w \in R$
and let $(B_t)_t$ be a Brownian motion started at $z \in S_R(\delta)$, and let $\tau'$ be the stopping time $\tau' := \min\{ t : W_t \in \p Q \}$.
By conformal invariance, there is a coupling of $(B_t)_{t\geq 0}$ and $(\phi(W_t))_t$ so that they trace the same path. It follows that
\[ \PP( B_{\tau} \in S_{R}^{\ast}(\delta)) = \PP( \phi(W_{\tau'}) \in S_{R}^{\ast}(\delta)) = \PP( W_{\tau'} \in Q^{\ast} )
 \leq \exp\left(- (\log 4/3) \lfloor \delta^{-1} \log R/r \rfloor \right), 
\] where the inequality follows from an application of Lemma~\ref{lem:poisson-rectangle}.  Utilizing $\lfloor x \rfloor \geq x - 1$ completes the proof. \qed

 \subsection{The proof of Lemma~\ref{lem:b-decreasing}} We now turn to the heart of Section~\ref{sec:b-decreasing}, Lemma~\ref{lem:IntFormForDiff}.

\begin{lemma}\label{lem:IntFormForDiff} For $\delta >0$, $R >0$, put $\a = e^{i\delta}$ and let $u$ be a weakly-positive harmonic function on a neighborhood of $S_R(0,\delta)$, let $z \in S_R(0,\delta/2)$ and let $(B_t)_{t\geq 0}$ be a Brownian motion started at $z$ and stopped when it hits $\p S_R(0,\delta/2)$ then 
\begin{equation} u(z) - u(\alpha \bar{z})  \geq -2\PP\left(B_{\tau} \in S^{\ast}_R(0,\delta/2)\right)\max_{z \in S^{\ast}_{R}(\delta)}|u(z)|.\end{equation}
 \end{lemma}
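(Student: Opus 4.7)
The plan is to recognize the function $v(z) := u(z) - u(\alpha \bar z) = h_{\delta,0}(z)$ as a harmonic function on $S_R(0,\delta/2)$, then apply Theorem~\ref{thm:harmonic-and-b-motion} to write $v(z) = \EE v(B_\tau)$ and control $v$ on each piece of $\partial S_R(0,\delta/2)$. The geometric key is that $z \mapsto \alpha \bar z = e^{i\delta}\bar z$ is the Euclidean reflection across the ray $\{t e^{i\delta/2} : t > 0\}$ bisecting the sector. Harmonicity of $v$ on $S_R(0,\delta)$ (and hence on the convex subdomain $S_R(0,\delta/2)$) follows from the standard stability of harmonicity under reflection and rotation, as already used in the proof of Lemma~\ref{lem:h-phi-HarmInSector}.

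The boundary $\partial S_R(0,\delta/2)$ splits into three pieces, on each of which $v$ admits a clean lower bound. On the upper edge $\{te^{i\delta/2}: t \in [R^{-1},R]\}$ the reflection fixes every point, so $v \equiv 0$. On the real segment $[R^{-1},R]$ we have $z = |z|$ and $\alpha\bar z = e^{i\delta}z$, so weak-positivity applied to $w := e^{i\delta}z$ gives
\[
v(z) = u(|z|) - u(e^{i\delta}z) = u(|w|) - u(w) \geq 0.
\]
Finally, for $z$ in the ends $S^\ast_R(0,\delta/2)$ we have $|\alpha\bar z| = |z| \in \{R^{-1},R\}$ and $\arg(\alpha\bar z) = \delta - \arg(z) \in [\delta/2,\delta]$, so both $z$ and $\alpha\bar z$ lie in $S^\ast_R(\delta)$. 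The crude bound $|v(z)| \leq 2M$ with $M := \max_{w \in S^\ast_R(\delta)}|u(w)|$ therefore holds on this part.

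Combining the three bounds in the Brownian representation yields
\[
u(z) - u(\alpha \bar z) = v(z) = \EE v(B_\tau) \geq -2M\cdot\PP\bigl(B_\tau \in S^\ast_R(0,\delta/2)\bigr),
\]
which is exactly the claimed inequality. There is no real obstacle: once one spots that the reflection $z \mapsto \alpha\bar z$ fixes the bisector and sends the positive real axis to the ray at angle $\delta$, weak-positivity forces the ``bad'' boundary contribution to come solely from the ends $S^\ast_R(0,\delta/2)$. The genuinely nontrivial work—estimating the hitting probability exponentially in $\delta^{-1}\log(R/r)$—has already been carried out in Lemma~\ref{lem:poisson-sector}, and will be combined with the present lemma in the proof of Lemma~\ref{lem:b-decreasing}.
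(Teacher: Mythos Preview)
Your proof is correct and arrives at the same boundary decomposition as the paper, but the route is slightly different and worth contrasting. The paper constructs a reflection coupling: it runs two Brownian motions from $z$ and $\alpha\bar z$, coupled by reflection across the bisector $\{te^{i\delta/2}\}$ until they meet, and then represents $u(z)$ and $u(\alpha\bar z)$ separately via Theorem~\ref{thm:harmonic-and-b-motion} on $S_R(0,\delta)$; the three events $E_1,E_2,E_3$ correspond exactly to your three boundary pieces. You instead observe directly that $v=h_{\delta,0}$ is harmonic on $S_R(0,\delta/2)$ and apply the representation once to $v$. This is more economical---it avoids building and justifying the coupling---and it also matches the stopping time $\tau$ that actually appears in the statement (the paper's proof silently juggles two stopping times $\tau_1,\tau_2$). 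The coupling viewpoint does make the geometric reason for the cancellation on the bisector more vivid, but your argument loses nothing logically. One small caveat: Theorem~\ref{thm:harmonic-and-b-motion} as quoted assumes $\Omega$ convex, and $S_R(0,\delta/2)$ is not; this is a harmless technicality (the Kakutani representation holds on any bounded regular domain, and the paper's own proof has the same issue), but you might mention it or pass to the rectangle via $z\mapsto\log z$.
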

\begin{proof}
We define two coupled Brownian motions starting at $z$ and $z^{\circ} := \alpha\bar{z}$, respectively. 
First, let $(B_t)$ be a Brownian motion started at $z$ and, in preparation for defining our Brownian motion started at $z^{\circ}$, we define two stopping times: $\tau = \tau_1 := \min \{ t : B_t \in \p S_R(0,\delta) \}$ and 
$\tau_2 := \min \{ t : B_t \in \p S_R(0,\delta/2) \}$.
Now, define the path $(B^{\circ}_t)_{t\geq 0}$ by $B_{t}^{\circ} := \alpha \bar{B}_t$ for $t\leq \tau_2$ and then $B_t^{\circ} := B_t$ for $t\geq \tau_2$. We now note that $(B_t^{\circ})_t$ is in fact a Brownian motion started at $z^{\circ}$; this is because it is a Brownian motion, by definition, after time $\tau_2$ and it is a reflection of a Brownian motion before time $\tau_2$, which is a Brownian motion. 
The only thing to note is that $B_{\tau_2} = \alpha\bar{B}_{\tau_2}$, that is, the two trajectories agree at $\tau_2$, and thus the whole trajectory is a Brownian motion by the strong Markov property.
Also note that $\tau =\min\{ t : B_t^{\circ} \in \p B_{R}(0,\delta) \}$, by symmetry.

We now apply Theorem~\ref{thm:harmonic-and-b-motion} to $u$, and $z,z^{\circ}$ in the region $S(0,\delta)$ to express 
\[ u(z) = \EE\, u(B_{\tau_1})\, \, \, \textit{ and }\, \, \, u(z^{\circ}) = \EE\, u(B_{\tau_1}^{\circ})
\] and therefore,
\begin{equation} \label{eq:BM-exp-of-u} u(z) - u(z^{\circ}) = \EE\left( u(B_{\tau}) - u(B_{\tau}^{\circ}) \right). 
\end{equation} To evaluate this expectation, we break up the space of trajectories into three events.
\begin{enumerate}
\item $E_1 :=  \{ \arg(B_{\tau_2}) = \delta/2 \}$, the event that $B_t$, $B_t^{\circ}$ meet;
\item $E_2 := \{ \arg(B_{\tau_2}) = 0 \}$, the event that $B_t$ hits $\R_{\geq 0}$, before meeting its reflection; 
\item $E_3 := \{ B_{\tau_2} \in S^{\ast}_R(\delta/2) \}$, the event that $B_t$ hits one of the ends of the sectors, before meeting its reflection.
\end{enumerate}
In the event of $E_1$, we have that $B_t,B_t^{\circ}$ meet before time $\tau$, and therefore $u(B_{\tau}) = u(B_{\tau}^{\circ})$ so 
\begin{equation} \label{eq:I1} I_1 := \EE\,\left( u(B_{\tau})- u(B_{\tau}^{\circ})\right) \mathbbm{1}(E_1) = 0 . \end{equation}
In the event of $E_2$, $B_{\tau} \in \R_{\geq 0}$ and thus $ B^{\circ}_{\tau} = \alpha B_{\tau}$ so $u(B_{\tau}) - u(B_{\tau}^{\circ}) \geq 0 $, by weak-positivity.
In particular,
\begin{equation} \label{eq:I2} I_2 := \EE\left( u(B_{\tau}) - u(\alpha B_{\tau}) \right) \mathbbm{1}(E_2) \geq 0. 
\end{equation}
In the case of $E_3$, we crudely estimate 
\begin{equation} \label{eq:I3} I_3 :=  \EE\, \left( u(B_{\tau}) - u(B_{\tau}^{\circ})\right) \mathbbm{1}(E_3) \geq -2 \PP(E_3) \max_{z \in S_{R}^{\ast}(\delta)} |u(z)|. \end{equation}
Now, from \eqref{eq:BM-exp-of-u}, and \eqref{eq:I1},\eqref{eq:I2},\eqref{eq:I3}, we have
\[  u(z) - u(z^{\circ}) =  I_1 + I_2+ I_3  \geq -2\max_{z \in S_{R}^{\ast}(\delta)} |u(z)| \PP_z\left(B_{\tau} \in S^{\ast}_R(0,\delta/2)\right), \]
as desired.
\end{proof} 
 
\vspace{4mm}

We are now able to prove Lemma~\ref{lem:b-decreasing}.

\vspace{4mm}

\noindent \emph{Proof of Lemma~\ref{lem:b-decreasing}.} 
To show that $u$ is $b$-decreasing on $S_r(\delta/2)$ we let  $\rho \in[1/r,r]$ and let $\t_1,\t_2 \in (0,\delta/2)$ satisfy $\t_2 > \t_1$. We need to show that 
\[ u(\rho e^{i\t_1}) -  u(\rho e^{i\t_2}) \geq -b. 
\] For this, let us put $\phi = \t_1 + \t_2$ and note that $\phi < \delta$.  Set $z = \rho e^{i\t_1}$ and $\a = e^{i\phi}$ and note that $\a \zbar = \rho e^{i\t_2}$.
	We now apply Lemma~\ref{lem:IntFormForDiff} with $\delta = \phi$ to obtain 
\begin{align*} \label{eq:b-decreasing-calc} 
	u(\rho e^{i\t_1}) - u(\rho e^{i\t_2} ) &\geq -2\PP\left(B_{\tau} \in S^{\ast}_R(0,\phi /2)\right)\max_{z \in S^{\ast}_{R}(\phi)}|u(z)|  \\
&\geq - 2 \cdot \frac{4}{3} \left(\frac{r}{R} \right)^{4c/\delta}\cdot \max_{z \in S_R^{\ast}(\delta) } |u(z)| \\
&\geq -b,
\end{align*} where the penultimate inequality holds due to the fact that $S_R^{\ast}(0,\phi/2) $ is a sector of width at most $\delta/2$ and
so we apply Lemma~\ref{lem:poisson-sector} to the sector $S_R(\delta/4)$. The last inequality holds by the condition on $b$ and the inequality $4 \log(4/3) > 1$.
Hence we have shown that $u$ is $b$-decreasing in $S_{r}(0,\delta/2)$. \qed

\section{A key comparison} \label{sec:Proof-of-tec-thm-key-comparison}

With our main positivity-hypothesis in place, we now start with the first in a series of steps to prove Lemma~\ref{thm:maintechnical}. In Sections~\ref{sec:Proof-of-tec-thm-key-comparison}, \ref{sec:TailOfcumulants} and \ref{sec:BES} we build up the ingredients for the proof of Lemma~\ref{thm:maintechnical}, finally stated and proved in Section~\ref{sec:technical-proof}. The objective of this section is to prove Lemma~\ref{lem:money}, which says that under the hypotheses of Lemma~\ref{thm:maintechnical} we can bound the maximum value of $u_0$ in a small box around $1$ in terms 
of the much more amenable function $\vp_{\g,b}$.

\begin{lemma} \label{lem:money} For $b \geq 0$, $\eps \in (0,1/8)$ and $\eta \in (0,\eps]$, let $u(z)$ be a
$b$-decreasing, weakly-positive, symmetric and harmonic function on $B(1,8\eps)$, for which $u(1) = 0$.
Let $u_0$ and $\vp_{\eta,b}$ be the associated functions defined in Section~\ref{sec:defs}. 
We have that 
\[ \max_{z \in B(1,\eps) } |u_0(z)| \leq 3^4 \cdot 3^{96 \eps/ \eta}\vp_{\eta,b}(1). \]
\end{lemma}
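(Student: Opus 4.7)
The plan is to combine an easy reduction, an iterated Harnack bound on the non-negative harmonic function $\vp_{\eta,b}$, and a passage from one-sided to two-sided estimates via the Borel--Carath\'eodory inequality.

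\textbf{Reduction and setup.} Since $|e^{i\eta}z| = |z|$, one has $\vp_{\eta,b}(z) = u(z) - u(e^{i\eta}z) + b = u_0(z) - u_0(e^{i\eta}z) + b$, and a direct check shows that $u_0$ inherits all the hypotheses on $u$ (harmonicity on $B(1,8\eps)$, symmetry, weak-positivity, $b$-decreasing, and $u_0(1) = 0$). I may therefore replace $u$ by $u_0$ from the start and assume $\mu = 0$, so that $U(w) := u(e^w) = \sum_{j\geq 2} a_j \Re(w^j)$ has vanishing linear part and $U(0) = 0$. By Lemmas~\ref{lem:h-phi-HarmInSector} and~\ref{lem:h-phi-+veInSector}, the function $\vp_{\eta,b}$ is non-negative and harmonic on a region comfortably containing $B(1, 4\eps)$. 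A small but crucial consequence of weak-positivity applied at $z = e^{i\eta}$ is $u(e^{i\eta}) \leq u(1) = 0$, which rearranges to $b \leq \vp_{\eta,b}(1)$; I will use this repeatedly to absorb the constant $b$.

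\textbf{Harnack chain and telescoping.} I would next apply the standard Harnack inequality iteratively along a chain of $O(\eps/\eta)$ overlapping balls of radius $\sim \eta$ inside the region where $\vp_{\eta,b}$ is positive and harmonic, each step contributing a multiplicative factor of at most $3$. This yields
\[
\sup_{z \in B(1, 2\eps)} \vp_{\eta,b}(z) \leq 3^{C\eps/\eta}\,\vp_{\eta,b}(1) =: M,
\]
and is the source of the exponential $3^{96\eps/\eta}$ in the conclusion, with $96$ arising from a careful accounting of chain length and Harnack factor. Iterating the identity $u(z) - u(e^{i\eta}z) = \vp_{\eta,b}(z) - b$ gives
\[
u(z) - u(e^{ik\eta}z) = \sum_{j=0}^{k-1} \bigl[\vp_{\eta,b}(e^{ij\eta}z) - b\bigr],
\]
so for $k = O(\eps/\eta)$ the angular oscillation of $u$ on $B(1, 2\eps)$ is at most $O(M\eps/\eta)$. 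Combined with weak-positivity ($u(\rho e^{i\theta}) \leq u(\rho)$), this produces a one-sided upper bound $U \leq A$ on $B(0, 2\eps)$ with $A = 3^{O(1)}M$, anchored at the known value $U(0) = 0$.

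\textbf{Borel--Carath\'eodory.} Since $U$ is harmonic with $U(0) = 0$ and $U \leq A$ on $B(0, 2\eps)$, the Borel--Carath\'eodory inequality gives
\[
\sup_{|w| \leq \eps} |U(w)| \leq \frac{2\eps}{2\eps - \eps}\,A = 2A,
\]
converting the one-sided bound into a two-sided bound on $|U|$ and hence on $|u_0|$ on $B(1, \eps)$. The desired form $3^4 \cdot 3^{96\eps/\eta}\vp_{\eta,b}(1)$ then follows after a careful choice of constants in the Harnack chain.

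\textbf{Main obstacle.} The crux is the passage from the bound on the ``angular-difference function'' $\vp_{\eta,b}$ to an honest two-sided bound on $|u_0|$. The hypotheses give only one-sided control (through weak-positivity and $\vp_{\eta,b} \geq 0$), so the two-sided bound must come from harmonicity together with the anchor $U(0) = 0$, which is precisely what Borel--Carath\'eodory supplies. The other delicate point is the bookkeeping in the Harnack chain: one must keep every intermediate ball inside the region of positivity and harmonicity of $\vp_{\eta,b}$ (which is where the hypothesis $\eta \leq \eps$ and the buffer between $B(1,\eps)$ and $B(1,8\eps)$ enter) while also controlling the total number of steps tightly enough to yield the specific exponent $96$.
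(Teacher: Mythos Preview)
Your approach has a genuine gap at the step ``Combined with weak-positivity, this produces a one-sided upper bound $U\le A$ on $B(0,2\eps)$.'' The telescoping identity and the Harnack chain on $\vp_{\eta,b}$ control only the \emph{angular} oscillation $U(t)-U(t+is)$, and weak-positivity merely says $U(t+is)\le U(t)$. Neither gives any information about $U(t)$ for real $t\ne 0$, so the radial direction is entirely uncontrolled and the single anchor $U(0)=0$ cannot propagate. (A minor related inaccuracy: by Lemma~\ref{lem:h-phi-+veInSector}, $\vp_{\eta,b}$ is non-negative only on $\{\arg z>-\eta/2\}$, not on all of $B(1,4\eps)$; this does not harm your telescoping, which only samples $\vp_{\eta,b}$ at non-negative arguments, but your stated region for the Harnack chain is too large.)

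The paper supplies the missing radial ingredient by a quite different route. Instead of seeking a global one-sided bound and invoking Borel--Carath\'eodory, it uses the Poisson representation and $u_0(1)=0$ (Lemma~\ref{lem:neg-boundary}) to locate a point $z_0=\rho e^{i\phi}\in\partial B(1,2\eps)$ with $\phi\ge 0$ and $u_0(z_0)\le -M/2$, where $M$ is comparable to $\max_{B(1,\eps)}|u_0|$. The $b$-decreasing hypothesis pushes this to $u_0(\rho e^{4\eps i})\le b-M/2$, and the key radial fact $u_0(\rho)\ge 0$ (Lemma~\ref{lem:u0-positive}, itself a consequence of the convexity $U''\ge 0$ on $\R$) then forces $h_{4\eps,b}(\rho)\ge M/2$. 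Two Harnack chains---one on the positive harmonic $h_{4\eps,b}$ to reach the angle $2\eps-\eta/2$, where the identity $h_{4\eps,b}=\vp_{\eta,b}$ holds, and a second on $\vp_{\eta,b}$ back to $1$---complete the bound. Your route could plausibly be salvaged by invoking this same convexity of $U$ on the real axis together with a mean-value argument to bound $U(t)$, but as written the one-sided bound is unjustified, and your constant $A=3^{O(1)}M$ also appears to drop a factor of $\eps/\eta$ coming from the telescoping.
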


To prove this lemma we make a few preparations.

\subsection{Positive on the real line}
In our first step towards Lemma~\ref{lem:money} we show that the function 
\[ u_0(z) = u(z)  - \mu \log|z|, \] is positive on the positive real axis. To prove this, we first need the
following basic fact, which first appears in the work of De Angelis \cite{angelis} and then was slightly\footnote{De Angelis actually assumes that $u = \log|p|$ for a polynomial $p$.} extended in the work of Bergweiler, Eremenko and Sokal \cite{BES},\cite{BE}. We include a short proof.

\begin{lemma} \label{lem:varNonNeg} Let $r > 0$.  If $u$ is weakly positive, symmetric and harmonic on a neighborhood of $r$ then  $U''(\log r) \geq 0$.
\end{lemma}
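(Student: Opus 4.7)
The plan is to exploit the fact that harmonicity turns the weak-positivity inequality (a statement about horizontal cross-sections near the real axis) into the desired positivity of the second derivative along the real axis, via Laplace's equation.

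Concretely, let $w_0 := \log r \in \R$ and consider $U(w) = u(e^w)$ as a function of the complex variable $w = s + it$. Since $u$ is harmonic in a neighborhood of $r$ and the exponential is analytic, $U$ is harmonic in a neighborhood of $w_0$. The quantity in question, $U''(\log r)$, is the second derivative of the real-variable function $s \mapsto U(s)$ at $s = w_0$, which coincides with the partial derivative $U_{ss}(w_0)$.

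Next I would extract information from weak positivity. For real $t$ of small absolute value, $|e^{w_0 + it}| = e^{w_0} = r$, and both $e^{w_0+it}$ and its modulus $r$ lie in the domain of $u$. So weak positivity gives
\[
U(w_0) = u(r) = u(|e^{w_0+it}|) \geq u(e^{w_0+it}) = U(w_0 + it).
\]
Thus the smooth real-valued function $t \mapsto U(w_0 + it)$ attains a local maximum at $t = 0$. The standard second derivative test then yields $U_{tt}(w_0) \leq 0$.

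Finally, since $U$ is harmonic at $w_0$, we have $U_{ss}(w_0) + U_{tt}(w_0) = 0$, and therefore
\[
U''(\log r) = U_{ss}(w_0) = -U_{tt}(w_0) \geq 0,
\]
as required. There is no real obstacle here; the only point worth noting is that symmetry is not actually used in this argument, but it is harmless and consistent with the hypotheses under which $U$ is subsequently analyzed. One could alternatively expand $U$ near $w_0$ as $U(w_0 + \xi) = \sum_{j\geq 0} \alpha_j \Re(\xi^j)$ with real coefficients (using harmonicity and symmetry as in Lemma~\ref{lem:USeries}), observe that $U(w_0 + it) = U(w_0) - \alpha_2 t^2 + O(t^4)$, and deduce $\alpha_2 \geq 0$ from weak positivity and $U''(w_0) = 2\alpha_2$; this gives the same conclusion.
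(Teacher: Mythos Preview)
Your proof is correct and follows essentially the same route as the paper: pass to $U(w)=u(e^w)$, use weak positivity to see that $t\mapsto U(w_0+it)$ has a local maximum at $t=0$ so that $U_{tt}(w_0)\le 0$, and then apply Laplace's equation to conclude $U''(w_0)=U_{ss}(w_0)=-U_{tt}(w_0)\ge 0$. The paper writes out the symmetric second-difference quotient explicitly (and invokes symmetry, though as you observe it is not really needed), but the logic is identical.
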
 
\begin{proof}
	Write $z = x + iy$ and put $V(x,y) = U(x + iy)$. Note that the harmonicity of $U$ at $a := \log r$ implies $V_{xx}(a,0) + V_{yy}(a,0) = 0\,.$
	Weak positivity and symmetry of $u$ implies that $V_{yy}(a,0) \leq 0$, since $V_{yy}(a,0)$ can be written
	\[ \lim_{h \rightarrow 0} \frac{ V(a,h) + V(a,-h) - 2V(a,0) }{h^2} = \lim_{h \rightarrow 0} \frac{ u(re^{ih}) + u(re^{-ih}) - 2u(r) }{h^2} \leq 0. \] 
	Thus $$U''(a) = V_{xx}(a,0) \geq 0\,.$$
\end{proof}

\begin{remark}
In the case $u = \log|f_X|$, there is a natural probabilistic interpretation of Lemma~\ref{lem:varNonNeg} that can be turned into a proof. After unwinding the definitions a little, one can see that Lemma~\ref{lem:varNonNeg}
simply says that the random variable defined by the probability generating function $f_X(rz)f^{-1}_X(r)$, has \emph{non-negative} variance. This is trivially true, as all random variables have non-negative variance. However, we have elected to include this more general result, as it will simplify our exposition.
\end{remark}

We now deduce the following small but crucial ingredient in the proof of Lemma~\ref{lem:money}: $u_0$ is non-negative on the positive real axis.

\begin{lemma} \label{lem:u0-positive}
For $\eps \in (0,1)$, let $E \subseteq \C$ be an open set containing the interval $[1-\eps,1+\eps]$. 
If $u$ is weakly positive, harmonic and symmetric in $E$ then  $u_0(r) \geq u(1)$, for all $r \in (1-\eps,1+\eps)$.  
\end{lemma}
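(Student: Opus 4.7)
The plan is to reduce the statement to a convexity argument on the real axis, via the change of variables $t = \log r$. Define $g(t) := U(t) = u(e^t)$ for $t$ in the open interval $I := (\log(1-\eps), \log(1+\eps))$. Because $E$ is an open set containing $[1-\eps,1+\eps]$, for every $r \in (1-\eps,1+\eps)$ the function $u$ is weakly positive, symmetric and harmonic on some neighborhood of $r$ contained in $E$, so Lemma~\ref{lem:varNonNeg} applies pointwise and yields $g''(t) = U''(\log r) \geq 0$ for every $t \in I$.

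Next I would use the definition of $\mu$ directly: by \eqref{eq:k1-is-mean} we have $g'(0) = \frac{d}{dw} u(e^w)|_{w=0} = \mu$. Since $g$ is convex on $I$, the graph of $g$ lies above its tangent line at $0$, i.e.
\[
g(t) \geq g(0) + g'(0)\, t = u(1) + \mu t \qquad \text{for all } t \in I.
\]
Substituting $t = \log r$ and rearranging gives $u(r) - \mu \log r \geq u(1)$, which is exactly $u_0(r) \geq u(1)$ by the definition \eqref{eq:def-of-u0} of $u_0$.

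There is no real obstacle here: the content of the lemma is Lemma~\ref{lem:varNonNeg} combined with the elementary fact that a convex function dominates its tangent. The only small thing to check is that the openness of $E$ and the fact that $E$ contains the closed interval $[1-\eps,1+\eps]$ ensure the hypotheses of Lemma~\ref{lem:varNonNeg} hold at every point of $(1-\eps,1+\eps)$, so that $g''\geq 0$ throughout $I$ rather than just at a single point. Everything else is bookkeeping in the exponential coordinate.
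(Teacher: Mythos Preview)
Your proof is correct and is essentially the same argument as the paper's: both pass to the exponential coordinate $t=\log r$, invoke Lemma~\ref{lem:varNonNeg} to get $U''\geq 0$ on the relevant interval, and then use the tangent-line inequality for convex functions (the paper phrases this as Taylor's theorem with Lagrange remainder) to conclude $U(t)\geq U(0)+\mu t$, which is exactly $u_0(r)\geq u(1)$.
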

\begin{proof}
We may write $r = e^t$ for some $t \in \R$ and apply Taylor's theorem to $U(t)$ at $t =0$ to obtain
\[ U(t) - U(0) - t U'(0) = \frac{t_0^2}{2} U''(t_0), 
\] for some $t_0$ with $t_0 \in (1-\eps,1+\eps)$. Since $U_0(t) = U(t) - tU'(0)$ (\eqref{equ:U0Series}, in Lemma~\ref{lem:USeries}), we have 
 $U_0(t) - U(0)= \frac{t_0^2}{2} U''(t_0) \geq 0$, by Lemma \ref{lem:u0-positive} and so 
\[ u_0(r) = u_0(e^t) = U_0(t) \geq u(1),\] as desired. 
\end{proof}

\subsection{The Poisson density and Harnack inequalities}
In Section~\ref{sec:b-decreasing}, we saw that we could use Brownian motion to recover the values of a harmonic function on $\Omega$ from the values of its boundary $\p \Omega$, by using Theorem~\ref{thm:harmonic-and-b-motion}. In particular, we had that
\begin{equation}\label{eq:BM-exp}
u(z) = \EE\, u(B_{\tau}),
\end{equation} where $(B_t)_{t\geq0}$ is a Brownian motion started at $z \in \Omega$ and $\tau$ is the stopping time of hitting $\p \Omega$.
In the case that $\Omega$ is a ball $B$, the expectation in \eqref{eq:BM-exp} has a density function $P_z$ (with respect to the Lebesgue measure on the circle), and so we can express 
\begin{equation} \label{eq:v-po-int} v(z) = \int_{\partial \Omega} v(s)P_z(s)\, ds.
\end{equation} We define the collection of functions $\{P_z\}$ as the \emph{Poisson density} of the circle\footnote{This definition is designed to parallel the (standard) definition of the \emph{Poisson kernel} which is defined with respect to the complex line integral, rather than the uniform measure on the circle.}
and note that $P_z$ are non-negative functions of the boundary. We refer the reader to \cite{harmonic-axler} and to \cite{conway} for a general treatment of harmonic functions on a ball.

We now state an important tool for working with positive harmonic functions, the \emph{Harnack inequalities} for the ball. These say that if $v$ is a positive harmonic function on the ball $B(0,2\eps)$
and if $z\in B(0,\eps) \subseteq B(0,2\eps)$ then 
\begin{equation}\label{eq:harnack-ball}
\frac{1}{3} \leq \frac{v(z)}{v(0)} \leq 3\,.
\end{equation} A statement and proof of this result can be found in \cite[pg.~47]{harmonic-axler}.  For our purposes, we need a slight generalization:

\begin{lemma} \label{lem:harnack-general}
	Let $v$ be a positive harmonic function on an open set $\Omega \subset \C$. For $\eps > 0$, let $z_1, z_2 \in \Omega$ be points at distance $d := d(z_1,z_2)$ so that all $z_3$ that lie on the line segment joining $z_1$ and $z_2$ have $B(z_3,\eps) \subset \Omega$. Then
	\begin{equation}
	\frac{1}{3^{2d/\eps + 1}} \leq \frac{v(z_1)}{v(z_2)} \leq 3^{2d/\eps + 1}\,.
	\end{equation}
\end{lemma}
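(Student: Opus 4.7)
The plan is to bootstrap the standard ball-Harnack inequality \eqref{eq:harnack-ball} by a chaining argument along the line segment from $z_1$ to $z_2$. The basic idea: if $v$ is positive and harmonic on $B(w,\eps)$ and $w' \in B(w,\eps/2)$, then the standard Harnack inequality applied to the ball $B(w,\eps)$ (comparing the center to a point at distance at most half the radius) yields $\tfrac{1}{3} \leq v(w')/v(w) \leq 3$. So as long as we can walk from $z_1$ to $z_2$ in short hops that each stay inside a ball of radius $\eps$ around a point where $v$ is positive harmonic, we pick up a factor of $3$ per hop.

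The hypothesis gives exactly what we need for this: every point $z_3$ on the segment $[z_1,z_2]$ satisfies $B(z_3,\eps)\subset \Omega$, so $v$ is positive harmonic on each such ball. Subdivide $[z_1,z_2]$ into $k$ equal pieces by intermediate points $w_0 = z_1, w_1, \ldots, w_k = z_2$, choosing $k = \lceil 2d/\eps \rceil$ so that consecutive points satisfy $|w_{i+1}-w_i| \leq \eps/2$. Applying the basic Harnack inequality on each ball $B(w_i, \eps)$ (with $w_{i+1}$ in the inner ball $B(w_i, \eps/2)$) gives
\[
\frac{1}{3} \leq \frac{v(w_{i+1})}{v(w_i)} \leq 3.
\]
Multiplying these $k$ inequalities telescopes to $3^{-k} \leq v(z_2)/v(z_1) \leq 3^k$, and bounding $k \leq 2d/\eps + 1$ yields the claimed estimate.

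There is no real obstacle here; the only things to be careful about are (i) that the standard ball Harnack inequality \eqref{eq:harnack-ball} as stated in the excerpt uses the ratio of radii $\eps$ and $2\eps$, so to match the hypothesis (which only guarantees balls of radius $\eps$ around segment points) I would rescale: apply the inequality to the ball $B(w_i,\eps)$ comparing the center $w_i$ to points within $B(w_i,\eps/2)$, giving exactly the constant $3$; and (ii) that the hypothesis that $B(z_3,\eps)\subset \Omega$ for every $z_3$ on the segment guarantees positivity and harmonicity of $v$ on each of the balls used in the chain, so the factor of $3$ bound is valid at every step.
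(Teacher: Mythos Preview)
Your proposal is correct and is essentially identical to the paper's proof: both chain the ball Harnack inequality \eqref{eq:harnack-ball} along the segment $[z_1,z_2]$ in steps of length at most $\eps/2$, picking up a factor of $3$ per step, and then bound the number of steps by $2d/\eps + 1$. The paper phrases this as choosing $k=\lfloor 2d/\eps\rfloor$ intermediate points and applying \eqref{eq:harnack-ball} $k+1$ times, while you take $k=\lceil 2d/\eps\rceil$ equal subintervals; the arithmetic is the same.
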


\vspace{4mm}

We will also need the following lemma, which is a simple consequence of the Harnack inequalities. 

\begin{lemma} \label{lem:ratioOfPoisson} For $\eps >0$, let $\{P_z\}$ be the Poisson density of the ball $B(1,2\eps)$. Then for all $z \in B(1,\eps)$ we have
\begin{equation} \label{equ:ratioOfPoisson} \max_{s \in \p B_{2\eps}} \frac{P_{z}(s)}{P_{1}(s)} \leq 3.\end{equation}
\end{lemma}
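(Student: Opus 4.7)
The plan is to fix a single boundary point $s \in \partial B(1,2\eps)$ and view the density $z \mapsto P_z(s)$ as a function of $z$; applying Harnack's inequality to this function and then taking the supremum over $s$ will yield the claim.

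More concretely, the standard theory of the Poisson density on a ball (for instance, the explicit formula
\[ P_z(s) = \frac{1}{2\pi(2\eps)}\cdot \frac{(2\eps)^2 - |z-1|^2}{|z-s|^2}, \]
which can be read off from \cite{harmonic-axler} or \cite{conway}) tells us that for each fixed boundary point $s \in \partial B(1,2\eps)$, the map $v(z) := P_z(s)$ is a \emph{positive harmonic} function on $B(1,2\eps)$. Positivity is clear from the formula (since $|z-1| < 2\eps$), and harmonicity in $z$ is a standard calculation (or, more conceptually, a consequence of the fact that $\int v(z) \varphi(s)\, ds$ reproduces the boundary data of any nice test function $\varphi$, which forces $v$ to be harmonic).

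Now I apply the Harnack inequality \eqref{eq:harnack-ball}, translated so that the ball is centered at $1$ rather than $0$, to the positive harmonic function $v$ on $B(1,2\eps)$. For any $z \in B(1,\eps)$ this gives
\[ \frac{P_z(s)}{P_1(s)} = \frac{v(z)}{v(1)} \leq 3. \]
Since the bound $3$ is independent of the choice of boundary point $s$, taking the maximum over $s \in \partial B(1,2\eps)$ yields \eqref{equ:ratioOfPoisson}. There is no real obstacle here: the only substantive input is the (well-known) fact that the Poisson density is harmonic in the interior variable, after which Harnack on a ball of half-radius does all the work.
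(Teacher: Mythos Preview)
Your argument is correct. The Poisson density $z\mapsto P_z(s)$ is indeed positive and harmonic on $B(1,2\eps)$ for each fixed boundary point $s$, so the Harnack inequality \eqref{eq:harnack-ball} (translated to center $1$) gives the ratio bound $3$ uniformly in $s$.

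The paper takes a slightly more hands-on route: after rescaling and recentering to the unit setup $B(0,2)$, it writes down the explicit Poisson density $P_z(w)=\tfrac{1}{4\pi}\tfrac{4-|z|^2}{|z-w|^2}$ and computes directly
\[
\frac{P_z(w)}{P_0(w)}=\frac{4-|z|^2}{|z-w|^2}\le\frac{4-|z|^2}{(2-|z|)^2}=\frac{2+|z|}{2-|z|}\le 3.
\]
Your version and the paper's are really the same computation viewed at different levels of abstraction: the standard proof of the Harnack bound \eqref{eq:harnack-ball} is exactly this estimate on the Poisson kernel. Your packaging has the advantage of making clear that Lemma~\ref{lem:ratioOfPoisson} is nothing more than Harnack applied to the kernel itself; the paper's direct computation is a touch more self-contained since it does not invoke harmonicity of $P_z(s)$ in $z$ as a separate fact.
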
 

\vspace{4mm}

Proofs of both Lemma~\ref{lem:harnack-general} and Lemma~\ref{lem:ratioOfPoisson} are basic and we postpone their short proofs to Appendix~\ref{app:proofs}.

We now prove a straight-forward lemma that will allow us to find a large negative value of $u_0$. For this lemma, we note that if $\{P_z\}$ is the Poisson density of 
$B(1,\eps)$ then, by symmetry, $P_1(s) = \frac{1}{2\pi\eps}$ for all $s \in \partial B(1,\eps)$.

\begin{lemma}\label{lem:neg-boundary} Let $\eps >0$ and 
let $v(z)$ be a symmetric, harmonic function on $B(1,\eps)$ with $v(1) = 0$, let $\{P_{z}\}$ be the Poisson density of the ball $B(1,\eps)$ and set
$$ M := \int_{s \in \partial B(1,\eps)} |v(s)|P_1(s) \,ds.$$ 
Then there is a $z_0 \in \partial B(1,\eps)$ with $\im(z_0) \geq 0$ and $ v(z_0) \leq - M/2$. 
\end{lemma}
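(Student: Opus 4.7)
The plan is to exploit three things: the mean value property (which is exactly the Poisson representation with $P_1$), the symmetry $v(s)=v(\bar s)$, and the fact that $P_1$ is the uniform density on $\partial B(1,\eps)$ (specifically $P_1(s)=\tfrac{1}{2\pi\eps}$).

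First I would use $v(1)=0$ together with the Poisson representation to write
\[
0 \;=\; v(1) \;=\; \int_{\partial B(1,\eps)} v(s)\, P_1(s)\, ds \;=\; \int_{\partial B(1,\eps)} v^+(s)\,P_1(s)\,ds \;-\; \int_{\partial B(1,\eps)} v^-(s)\,P_1(s)\,ds,
\]
so that the positive and negative parts balance. Since $|v|=v^++v^-$, this gives $\int v^-P_1\,ds = \int v^+P_1\,ds = M/2$. Because $P_1 \equiv \tfrac{1}{2\pi\eps}$ on the circle, this translates into the arc-length statement
\[
\int_{\partial B(1,\eps)} v^-(s)\, ds \;=\; \pi\eps\, M.
\]

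Next, I would invoke symmetry. Since $v(\bar s)=v(s)$, the same identity holds for $v^-$, so the integral of $v^-$ on the upper half $U^+:=\{s\in\partial B(1,\eps):\im(s)\geq 0\}$ equals the integral on the lower half. Each is therefore $\tfrac{\pi\eps M}{2}$. Since $U^+$ has arc length $\pi\eps$, the average value of $v^-$ on $U^+$ is exactly $M/2$.

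Finally, as $v$ is continuous on the closed ball (being harmonic on a neighborhood of $\partial B(1,\eps)$ is built into the statement, but even without that one can take a slightly smaller radius and pass to the limit), $v^-$ is continuous on the compact arc $U^+$ and attains its maximum; an average of $M/2$ forces the maximum to be at least $M/2$. Choosing $z_0\in U^+$ to be a point where this maximum is attained yields $v^-(z_0)\geq M/2$, hence $v(z_0)\leq -M/2$, as required. There is no real obstacle here beyond keeping the sign and measure-theoretic bookkeeping straight; the crux is recognizing that symmetry immediately halves the integral of $v^-$ onto the upper semicircle.
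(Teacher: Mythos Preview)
Your argument is correct and is essentially the same as the paper's: both split $|v|=v^++v^-$, use $v(1)=0$ to get $\int v^-P_1\,ds=M/2$, then use that $P_1$ is uniform to pass from an integral bound to a pointwise bound via the max/average inequality. The only cosmetic difference is the order in which symmetry is invoked: the paper first finds some $z_0\in\partial B(1,\eps)$ with $v(z_0)\le -M/2$ and then observes that $\bar z_0$ works equally well, whereas you first halve the integral onto the upper semicircle and find $z_0$ there directly.
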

\begin{proof}
Let us set $B_\eps := B(1,\eps)$, $P(s) := P_{1}(s)$ and define $\p B_\eps = E_+ \cup E_-$ where $E_+ = \{ s \in \p B_\eps : v(s)\geq 0 \}$ and $E_+ = \{ s \in \p B_\eps: v(s)\leq 0 \}$. Now put 
\[ A = \int_{s \in E^+} |v(s)|P(s)\,ds \quad\text{and}\quad  B = \int_{s \in E^-} |v(s)|P(s)\, ds. 
\] and note that 
\[ 0 = v(1) = \int_{s \in \partial B_\eps} v(s) P(s)\,ds = \int_{s \in E^+} |v(s)|P(s)\,ds  - \int_{s \in E^-} |v(s)|P(s)\,ds  = A-B,
\] and that $A+B = M$. Thus 
\[ M/2 = \int_{E^-} |v(s)|P(s) \, ds \leq \max_{s\in E^{-}} |v(s)| \int_{E^-} P(s) \, ds \leq \max_{s\in E^{-}} |v(s)|, \]
where the last line holds due to the fact that $P(s) = 1/(2\pi\eps)$ and the length of $E^-$ is at most $2\pi\eps$. So if we let $z_0$ be a value which attains 
this maximum, we note that both $v(z_0) = v(\bar{z}_0) \leq -M/2$ by symmetry. Hence one of $z_0,\bar{z}_0$ will have non-negative imaginary part, as desired. 
\end{proof}

\subsection{Proof of Lemma~\ref{lem:money} } 
We now are in a position to prove the main result of this section, Lemma~\ref{lem:money}.

\vspace{4mm}

\noindent \emph{Proof of Lemma~\ref{lem:money}. }
To reduce clutter, let us define $B_{\eps}, B_{2\eps}$ to be $B(1,\eps),B(1,2\eps)$, respectively. Let $P_z$ be the Poisson density of $B_{2\eps}$ and put
\[ M := \int_{s \in \partial B_{2\eps}} |u_0(s)|P_{1}(s) \,ds .
\]By the definition of the Poisson density, for each $z \in B_{\eps}$, we have 
\[ u_0(z) =  \int_{s \in \p B_{2\eps} } u_0(s)  P_{z}(s)\, ds 
\] 
and since, for all $z \in B_{\eps}$, we have $\max_{s \in \p B_{2\eps}} \, P_{z}(s)/P_{1}(s) \leq 3 $ (by Lemma~\ref{lem:ratioOfPoisson})
we obtain  \begin{equation} \label{eq:u-UB}
 \max_{z \in B_{\eps}} |u_0(z)| \leq  3 \int_{s\in \p B_{2\eps}} |u_0(z)|P_{1}(s) \, ds = 3M. 
\end{equation}
We now apply Lemma~\ref{lem:neg-boundary} to the function $v := u_0(z)$ (which is harmonic, symmetric and has $u_0(1) = 0$) and the region $B_{2\eps}$ to find a point $z_0 \in \p B_{2\eps}$ for which $u_0(z_0) \leq  -M/2$ and $\im(z_0) \geq 0$.  By the planar geometry calculation Lemma \ref{lem:planar-geometry1}, we may write $z_0$ in the form $z_0 = \rho e^{i\phi}$ where $\phi \in [0,4\eps]$ and $\rho \in [1-2\eps,1+2\eps]$.
 
Another planar geometry calculation (Lemma \ref{lem:planar-geometry2}) ensures that $\rho e^{4\eps i} \in B(1,8\eps)$, allowing us to make use of the $b$-decreasing hypothesis: 
\begin{align*}
 u_0(\rho e^{4\eps i}) &\leq u_0(z_0) +b \\
 &\leq b-M/2\,.
 \end{align*} 
 We now apply Lemma~\ref{lem:u0-positive} to see that $u_0(\rho) \geq 0$ as $\rho >0$, thereby allowing us to obtain a bound on $h_{4\eps,b}(\rho)$
\begin{equation} \label{equ:inmoney1}  h_{4\eps,b}(\rho) = u_0(\rho) - u_0(\rho e^{i4\eps})+b \geq -u_0(\rho e^{i4\eps}) + b \geq M/2. 
\end{equation} 
We know that $S(-4\eps,4\eps) \cap B_{4\eps} \subset B_{8 \eps}$ (Lemma~\ref{lem:planar-geometry2}) and $u$ is $b$-decreasing in $B_{8\eps}$.  We then know that $h_{4\eps,b}(z)$ is harmonic and positive (Lemmas~\ref{lem:h-phi-HarmInSector} and \ref{lem:h-phi-+veInSector}) in $S(-2\eps,2\eps) \cap B_{4\eps}$ and thus  
we may apply Lemma~\ref{lem:harnack-general} to learn that 
\begin{equation} \label{equ:inmoney2}  h_{4\eps,b}(\rho) \leq 3^{64\eps/\eta + 1} h_{4\eps,b}(\rho e^{i(2\eps -\eta/2)} ),
\end{equation} 
since the distance $d(\rho,\rho e^{i (2\eps - \eta/2)}) \leq (1+2\eps)2\eps \leq 4\eps$ and each point on the segment between them is at least $\rho\cdot\eta /4 \geq \eta/8$ from the boundary of $S(-2\eps,2\eps) \cap B_{4\eps}$.  

Now observe that at the value of $z = \rho e^{i(2\eps - \eta/2)}$, we have $h_{4\eps,b}(z) = \varphi_{\eta,b}(z)$. That is,

\begin{equation} \label{equ:inmoney3}
 h_{4\eps,b}(\rho e^{i(2\eps - \eta/2)}) = u(\rho e^{i(2\eps - \eta/2)}) - u(\rho e^{i(2\eps + \eta/2)}) + b = \vp_{\eta,b} (\rho e^{i(2\eps-\eta/2)} ). \end{equation} We now apply Lemmas~\ref{lem:h-phi-HarmInSector} and \ref{lem:h-phi-+veInSector} 
 to learn that $\vp_{\eta,b}$ is harmonic and positive in 
\[ S(-\eta/2, 4\eps - \eta) \cap B_{4\eps}. 
 \] Hence we may apply Lemma~\ref{lem:harnack-general} along with the fact that $\rho \in [1- 2\eps,1+2\eps]$ to see that 

\begin{equation} \label{equ:inmoney4} \vp_{\eta,b} (\rho e^{i(2\eps-\eta)}) \leq 3^{32 \eps/\eta + 1}   \vp_{\eta,b} (1) \,, 
\end{equation}  
since $|\rho e^{i(2\eps - \eta/2)}-1| \leq 4\eps$ and each point on the segment between them is at least $\eta/4$ from the boundary of 
$S(-\eta/2, 4\eps - \eta) \cap B_{4\eps}$.
Thus, chaining together lines lines~\eqref{eq:u-UB},\eqref{equ:inmoney1},\eqref{equ:inmoney2},\eqref{equ:inmoney3},\eqref{equ:inmoney4} gives
\[ \max_{z \in B_{\eps}} |u_0(z)| \leq  3^4 \cdot 3^{96 \eps/ \eta}\vp_{\eta,b}(1) ,
\] as desired. 
\qed 

\section{Bounding the tail of the cumulant sequence} \label{sec:TailOfcumulants}

In the previous section we showed how to control the maximum of $u_0$ in a small ball around $1$, in terms of the much more amenable function $\vp_{\eta,b}$.
In this section, we use this bound to prove that the 
normalized cumulant sequence $(a_j)_{j\geq 2}$ has nice decay properties.

\begin{lemma} \label{lem:CumulantDecay}  For $\eps \in (0,2^{-4})$ and $b \geq 0$, let $u$ be a
$b$-decreasing function, weakly-positive, symmetric and harmonic function in $B(1,2^{4}\eps)$.
Let $(a_j)_{j\geq 1}$ be the normalized cumulant sequence of $u$. If
\begin{equation} \label{equ:condOnb}
 \sum_{j\geq 2} |a_j|\eps^j > b, 
\end{equation} 
then for all $L \geq 2$ we have
\begin{equation} \label{equ:CumulantFrac} \frac{ \sum_{j\geq L } |a_j|\eps^j }{ \sum_{j\geq 2} |a_j|\eps^j } \leq C \cdot 2^{-L}, \end{equation}
where $C >0$ is an absolute constant\footnote{Indeed, we can take $C = 3^{390}$}.
\end{lemma}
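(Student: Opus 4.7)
The plan is to extract the tail bound via two main ingredients: (i) a bound $M \leq C_0 S$, where $M := \max_{z \in B(1, 2\eps)}|u_0(z)|$ and $S := \sum_{j \geq 2}|a_j|\eps^j$, coming from Lemma~\ref{lem:money}; and (ii) coefficient estimates derived from $M$ via Fourier/Cauchy analysis of the holomorphic function $F$ whose real part equals $U_0(w) := u_0(e^w)$. The strategy is to show that $|a_j|\eps^j \lesssim \rho^j S$ for some $\rho$ close to $1/2$ and sum the resulting geometric tail.

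First I would apply Lemma~\ref{lem:money} with parameters $\eps_M := 2\eps$ and $\eta := \eps$. Both are admissible: the first because $u$ is harmonic on $B(1, 8 \cdot 2\eps) = B(1, 2^4\eps)$, and the second because $\eta \leq \eps_M$. This yields $M \leq 3^{4} \cdot 3^{96 \cdot 2}\vp_{\eps, b}(1) = 3^{196}\vp_{\eps, b}(1)$. Using symmetry and the expansion from Lemma~\ref{lem:USeries}, I would expand
\[ \vp_{\eps, b}(1) = u(1) - u(e^{i\eps}) + b = -U_0(i\eps) + b = \sum_{k \geq 1}(-1)^{k+1}a_{2k}\eps^{2k} + b. \]
Bounding the signed sum by the absolute sum gives $\vp_{\eps, b}(1) \leq \sum_{j \geq 2}|a_j|\eps^j + b = S + b \leq 2S$ by the hypothesis $S > b$. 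Hence $M \leq 2 \cdot 3^{196} S =: C_0 S$.

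To convert this into coefficient bounds, note that $F(w) = \sum_{j \geq 2} a_j w^j$ is analytic on a neighborhood of $0$, with $U_0 = \Re F$, and that the circle $\{|w| = r\}$ is mapped by $\exp$ into $B(1, 2\eps)$ provided $r \leq \log(1 + 2\eps)$. I would apply the Fourier representation $a_j \eps^j = \pi^{-1}\int_0^{2\pi}U_0(\eps e^{i\theta})\cos(j\theta)\,d\theta$ at such a radius $r$ (or equivalently Borel--Carath\'eodory and Cauchy) to obtain $|a_j|r^j \leq \text{const}\cdot M$, and therefore
\[ |a_j|\eps^j \leq \text{const}\cdot C_0 S \cdot (\eps/r)^j. \]
Summing the geometric tail over $j \geq L$ then yields a bound of the form $\text{const}\cdot C_0 S \cdot (\eps/r)^L$.

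The main obstacle is obtaining the sharp rate $1/2$. With the direct choice $r = \log(1+2\eps)$ the ratio $\eps/r$ is strictly larger than $1/2$ (roughly $8/15$ when $\eps < 2^{-4}$), so the naive summation gives $(8/15)^L$ rather than $2^{-L}$, and no uniform constant $C$ can absorb the extra $(16/15)^L$ factor. Bridging this gap is the pivotal technical step. I expect this is achieved by sharpening the coefficient extraction---for instance, combining Parseval's identity $\sum_j a_j^2 r^{2j} \leq 2M^2$ on $|w| = r$ with a Hadamard three-circle / interpolation argument to push the effective radius out to $2\eps$, or by applying Lemma~\ref{lem:money} at several scales to enlarge the domain on which $U_0$ is bounded by a constant multiple of $S$. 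The multiplicative losses from this refinement are then absorbed into the very large constant $C = 3^{390}$.
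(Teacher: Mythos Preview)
Your overall architecture is right—compare both numerator and denominator of \eqref{equ:CumulantFrac} to $\vp_{\eps,b}(1)$ via Lemma~\ref{lem:money}, then extract coefficient bounds from the series for $U_0$—and the obstacle you flag is real given your parameter choice. But the fix is much simpler than Hadamard three-circle or a multi-scale argument: the bottleneck is that you applied Lemma~\ref{lem:money} with radius $2\eps$, which only controls $u_0$ on $B(1,2\eps)$ and hence $U_0$ only on $|w| \lesssim \eps$; this is precisely why your geometric ratio $\eps/r$ cannot reach $1/2$. Instead, apply Lemma~\ref{lem:money} with radius $4\eps$ and $\eta = \eps$. The larger ratio $4$ just inflates the constant to $3^{4+96\cdot 4} = 3^{388}$. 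Now $u_0$ is controlled on $B(1,4\eps)$, and since $|e^w - 1| \leq 2|w|$ for $|w| \leq 1/2$, the circle $\{|w| = 2\eps\}$ maps under $\exp$ into $B(1,4\eps)$. Extracting Fourier coefficients of $U_0$ on that circle gives $|a_j|(2\eps)^j \leq 2 \max_{|w|=2\eps}|U_0(w)|$, hence $|a_j|\eps^j \leq 2^{1-j}\cdot 3^{388}\vp_{\eps,b}(1) \leq 2^{2-j}\cdot 3^{388} S$, and summing over $j\geq L$ yields the stated $C\cdot 2^{-L}$ with room to spare.

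The paper does essentially this, packaged as Cauchy--Schwarz plus Parseval rather than termwise Fourier extraction: writing $|a_j|\eps^j = |a_j|(2\eps)^j \cdot 2^{-j}$ and applying Cauchy--Schwarz against the sequence $(2^{-j})_{j\geq L}$ produces the factor $2^{-L}$ times $\bigl(\sum_j |a_j|^2(2\eps)^{2j}\bigr)^{1/2}$; Parseval on the circle $|w| = 2\eps$ bounds this sum by $\max_{|w|=2\eps}|U_0(w)|^2 \leq \max_{B(1,4\eps)}|u_0|^2$; and Lemma~\ref{lem:money} at $4\eps$ closes the loop exactly as above. Your direct coefficient extraction at radius $2\eps$ is an equivalent (and arguably more transparent) route. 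One bookkeeping remark: invoking Lemma~\ref{lem:money} at $4\eps$ formally asks for $u$ to be $b$-decreasing and harmonic on $B(1,32\eps)$ rather than the stated $B(1,2^4\eps)$; the paper's own proof uses $4\eps$ as well, so the $2^4$ in the hypothesis appears to be a minor slip, harmless since the lemma is only invoked downstream with $\eps$ replaced by $\eps/32$, leaving a factor of two of slack.
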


\begin{proof} We first point out that the expression in \eqref{equ:CumulantFrac} makes sense; the denominator is non-zero from the strict inequality at \eqref{equ:condOnb} and the numerator is finite since we may write 
\begin{equation}\label{eq:U0-in-cumulant-decay} U_0(w) = \sum_{j\geq 2} a_j\Re(w^j),
\end{equation} by  Lemma~\ref{lem:USeries}, for all $ w \in B(0,8\eps)$ and thus the series $\sum a_j \eps^j$ is absolutely convergent.

To prove Lemma~\ref{lem:CumulantDecay}, the idea is to compare both the numerator and denominator in \eqref{equ:CumulantFrac} to $\vp_{\eps,b}(1)$.
We begin with the denominator. Recalling that \[ \vp_{\eps,b}(e^{w}) = u(e^{w}) - u(e^{w+i\eps}) + b,\] 
we use \eqref{eq:U0-in-cumulant-decay} to express
\[ \vp_{\eps,b}(e^{w}) = \sum_{j\geq 2} a_j \Re( w^j - (w+i\eps)^j) + b, 
\] for $w$ sufficiently small. And so, setting $w = 0 $, we obtain  
\begin{equation} \label{equ:lowerBoundOnDenom}
 |\vp_{\eps,b}(1)| \leq  \sum_{j\geq 2} |a_j| \eps^j + b \leq 2\sum_{j\geq 2} |a_j| \eps^j, \end{equation} by the triangle inequality
 and our assumption at \eqref{equ:condOnb}.

 We now turn to obtain an upper bound on the numerator of (\ref{equ:CumulantFrac}).  We apply Cauchy-Schwarz to obtain
\begin{equation} \label{equ:cauchySchwartz} \sum_{j\geq L } |a_j|\eps^j \leq  2\left( \sum_{j\geq 2 } |a_j|^2(2\eps)^{2j} \right)^{1/2} 2^{-L}. 
\end{equation} 
We now look to relate the series on the right-hand-side of \eqref{equ:cauchySchwartz} to $U_0$.
In preparation for this, we write
\[ U_0(\rho,\t) := U_0(\rho e^{i\t}) =  \sum_{j\geq 2} a_j\rho^{j}\cos(j\t),
\] which is valid for all $|\rho| < 8\eps$, due to \eqref{eq:U0-in-cumulant-decay}, and then use Parseval's Theorem to write  
\begin{equation} \label{equ:parseval}
  \sum_{j\geq 2 } |a_j|^2(2\eps)^{2j}  =  \frac{1}{2\pi}\int_0^{2\pi} |U_0(2\eps,\t)|^2 d\t \leq \max_{\t \in [0,2\pi]} |U_0(2\eps,\t)|^2. 
\end{equation}
As a side remark, note that \eqref{equ:parseval}, along with the strict inequality at \eqref{equ:condOnb}, implies that $\max_{\t \in [0,2\pi]} |U_0(2\eps,\t)| >0$.
 
Returning to the main arc of the proof, recall that $z = e^{w}$ and $w = \rho e^{i\theta}$; so as $\t$ ranges over $[0,2\pi]$, $z$ lies on the curve
\[ \Gamma = \{ \exp(2\eps e^{i\theta}) : \t \in [0,2\pi] \},
\] which is contained in the ball $B(1,4\eps)$, due to the inequality $|1-\exp((2\eps)e^{i\t})| \leq 4\eps$, which holds for $\eps <1 $. 
Hence we may bound the right hand side of \eqref{equ:parseval} 
\begin{equation} \label{equ:boundInball}
 \max_{\t \in [0,2\pi]} |U_0(2\eps,\t)|^2 \leq \max_{z \in B(1,4\eps)} |u_0(z)|^2. 
\end{equation}
Here is the key ingredient: we apply Lemma~\ref{lem:money} to obtain an upper bound on $u_0$ in terms of $\vp_{\eta,b}$ in $B(1,4\eps)$ with $\eta = \eps$:
\begin{equation} \label{equ:u0andphi} 
\max_{z \in B(1,4\eps)} |u_0(z)|^2  \leq  \left( 3^{388} \vp_{\eps,b}(1) \right)^2. 
\end{equation}
Note that this also implies that $\vp_{\eps,b}(1) >0$, due to the remark after \eqref{equ:parseval} and \eqref{equ:boundInball},\eqref{equ:u0andphi}.

To finish, we put together the lower bound at (\ref{equ:lowerBoundOnDenom}) on the denominator in (\ref{equ:CumulantFrac}) with the 
upper bound on the numerator, coming from \eqref{equ:cauchySchwartz}, \eqref{equ:parseval}, \eqref{equ:boundInball} and \eqref{equ:u0andphi}, to obtain 
\[  \frac{ \sum_{j\geq L } |a_j|\eps^j }{ \sum_{j\geq 2} |a_j|\eps^i } \leq  2 \cdot \frac{ 3^{388}\vp_{\eps,b}(1)}{\frac{1}{2}\vp_{\eps,b}(1)}2^{-L} \leq 3^{390} 2^{-L}, 
\] as desired.
\end{proof}

\section{Taming the cumulant sequence} \label{sec:BES}

In this section we provide a third and final ingredient in our proof of Lemma~\ref{thm:maintechnical}. In Section~\ref{sec:TailOfcumulants}, we 
showed that the sequence $(a_j)_{j\geq 2}$ had to have quite a 
bit of its ``mass'' concentrated on the early terms. In this section, we use our weak-positivity hypothesis to show that, in this situation, we can control all of the cumulants in terms of the \emph{second} cumulant, the variance. 

The main result of this section is Lemma~\ref{lem:BES}, which can be seen as a quantitative version of a tool co-discovered by De Angelis \cite{angelis} 
and Bergweiler, Eremenko and Sokal \cite{BES} which was used in their work on classifying polynomials whose large powers have all positive coefficients. It is also a relative of Lemma 7 in the previous work of the authors \cite{clt1} and can be viewed as an effective form of Marcinkiewicz's Theorem \cite{Marcin}.

To prove Lemma~\ref{lem:BES} we need the following preparatory lemma, which is an elementary fact about sequences of non-negative real numbers.

\begin{lemma} \label{lem:bigTermBeforeCutoff}
Let $A \geq 1$, $s >0$ and let $(c_i)_{i\geq 1}$ be a sequence of non-negative numbers for which
the sum $\sum_{i\geq 1} c_is^i $ converges and is non-zero. If $L \in \N$ is such that 
\begin{equation} \label{eq:truncation}
\sum_{i=1}^{L} c_is^i >  \sum_{i > L} c_is^i
\end{equation} then there exists an $\ell \in \{1,\ldots,L\}$ and $s_{\ast} > s(16A)^{-(L+1)}$ so that 
\begin{equation} \label{equ:finalCond} c_{\ell}s_{\ast}^{\ell} > A \sum_{i \neq \ell} c_is^i_{\ast}. \end{equation} 
\end{lemma}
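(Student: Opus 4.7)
The plan is to proceed by strong induction on $L$.

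For the base case $L = 1$, I will take $\ell = 1$ and $s_\ast = s/(16A)$. The hypothesis $c_1 s > \sum_{i \geq 2} c_i s^i$, combined with the bound $(s_\ast/s)^i = (16A)^{-i} \leq (16A)^{-2}$ for $i \geq 2$, yields
\[
A \sum_{i \geq 2} c_i s_\ast^i \;\leq\; A(16A)^{-2}\sum_{i \geq 2} c_i s^i \;<\; A(16A)^{-2} c_1 s \;<\; c_1 s_\ast,
\]
while $s_\ast = s(16A)^{-1} > s(16A)^{-2}$, meeting the required bound.

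For the inductive step with $L \geq 2$, I case-split on whether the truncation hypothesis already holds at level $L-1$. In the easy case $\sum_{i=1}^{L-1} c_i s^i > \sum_{i \geq L} c_i s^i$, I invoke the inductive hypothesis at level $L-1$ and scale $s$ to obtain $\ell \in \{1,\ldots, L-1\} \subset \{1,\ldots, L\}$ and $s_\ast > s(16A)^{-L} > s(16A)^{-(L+1)}$. Otherwise, the mass is skewed toward indices $\geq L$; I then rescale to $s' = s/(16A)$ and observe that the truncation ratio $\sum_{i < L} c_i t^i / \sum_{i \geq L} c_i t^i$ grows by a factor of at least $16A$ upon this single rescaling, since indices $\geq L$ are weighted by at most $(16A)^{-L}$ while indices $< L$ are weighted by at least $(16A)^{-(L-1)}$. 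Typically, after this one step the level-$(L-1)$ truncation will dominate at scale $s'$, and I apply the inductive hypothesis there to produce $s_\ast > s'(16A)^{-L} = s(16A)^{-(L+1)}$.

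The delicate sub-case occurs when the initial truncation ratio is so small that a single rescaling still fails to flip the truncation. In that regime the mass must be sharply concentrated near index $L$ (and the tail is further suppressed by rescaling of higher powers), and I will instead verify directly that $\ell = L$ at a carefully chosen $s_\ast$ in the allowed interval satisfies the conclusion, by separately bounding the contributions from $i < L$ (small by the extreme degeneracy) and $i > L$ (small because $(s_\ast/s)^i$ decays rapidly for $i > L$).

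I expect the main obstacle to be the bookkeeping in this borderline regime: the budget $(16A)^{-(L+1)}$ for $s_\ast$ tolerates at most one rescaling step before the inductive call, so the argument must show that either the induction closes after one rescaling or the direct verification with $\ell = L$ succeeds. Making this dichotomy airtight requires careful interplay between Case B's inequality $\sum_{i \geq L} c_i s^i \geq \sum_{i < L} c_i s^i$ and the quantitative strengthening needed for the direct verification; the remaining calculations are elementary.
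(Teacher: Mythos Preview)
Your inductive framework is a reasonable alternative to the paper's algorithmic proof, and the base case and Case~A are fine. However, the dichotomy you propose in Case~B has a genuine gap: it is \emph{not} true that in the delicate sub-case the choice $\ell = L$ must succeed at some admissible $s_\ast$.

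Here is a concrete counterexample. Take $L = 2$, $A = 100$, $s = 1$, $c_1 = 1$, $c_2 = c_3 = 2000$, and $c_i = 0$ for $i \geq 4$. Then $\sum_{i \leq 2} c_i s^i = 2001 > 2000 = \sum_{i > 2} c_i s^i$, so the hypothesis \eqref{eq:truncation} holds, and we are in Case~B since $c_1 s = 1 \leq 4000$. After one rescaling to $s' = s/(16A) = 1/1600$ one has $c_1 s' = 1/1600 < 1/1280 \approx c_2(s')^2 + c_3(s')^3$, so by your definition we are in the delicate sub-case. But the condition $c_2 s_\ast^2 > A(c_1 s_\ast + c_3 s_\ast^3)$ reads $20 s_\ast > 1 + 2000 s_\ast^2$, whose discriminant is $400 - 8000 < 0$; hence $\ell = L = 2$ fails for \emph{every} $s_\ast > 0$. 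The lemma's conclusion here is only attainable with $\ell = 1$ at a much smaller scale.

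The heuristic ``mass concentrated near $L$'' breaks because it does not separate mass at $L$ from mass at $L+1$: when $c_L$ and $c_{L+1}$ are comparable, shrinking $s_\ast$ enough to beat the tail $c_{L+1} s_\ast^{L+1}$ forces the head term $c_1 s_\ast$ to overwhelm $c_L s_\ast^L$. The paper's proof sidesteps this by an iterative descent: at each step it selects the current argmax index $j_{t+1} \leq j_t$, rescales by $1/(16A)$, and maintains the invariant that $c_{j_t} s_t^{j_t}$ already dominates the tail $\sum_{i > j_t}^{L}$. Crucially the dominant index is allowed to drop strictly below $L$ as the scale shrinks, and strict monotonicity of $(j_t)$ guarantees termination in at most $L$ steps, yielding $s_\ast > s(16A)^{-(L+1)}$. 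Your induction could plausibly be repaired along these lines, but the repair is not mere bookkeeping: you must allow $\ell$ to be chosen adaptively in the delicate sub-case rather than pinning it to $L$.
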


\begin{proof}
	To start, we choose $s_0 := s/(2A)$. This immediately gives us
\begin{equation} \label{eq:truncation-A}
\sum_{i=1}^{L} c_is_0^i >  (2A) \sum_{i > L} c_is_0^i. 
\end{equation} 

	We now define an algorithm that will find $\ell \in \{1,\ldots,L\}$ and $s_{\ast} > s(16A)^{-(L+1)}$ that satisfies \eqref{equ:finalCond}: Initialize $t = 0, s_0 = s_0$ (defined above) and $j_0 = L$ and inductively define a sequence of integers $j_1 \geq j_2 \geq \cdots \geq 1$ and positive real numbers $s_1 > s_2 > \cdots$ as follows: if the pair $(j_t,s_t)$ satisfies 
 \begin{equation}\label{equ:HaltCond}
	c_{j_t} s_t^{j_t} > 2A \sum_{\substack{1 \leq i \leq L \\ i \neq j_t}} c_i s_t^i\,. 
	\end{equation}
then we terminate and return $(\ell,s_\ast) =(j_t,s_t)$.  Otherwise, choose $j_{t+1}$ so that 
	\begin{equation} \label{eq:j+1-def}
	c_{j_{t+1}} s_t^{j_{t+1}} =  \max\{ c_1 s_t^1, c_2 s_t^2, \ldots , c_ts_t^{j_t} \}
	\end{equation}
	and set $s_{t + 1} = s_t /(16A)$. To see that this algorithm successfully produces a pair $(\ell,s_\ast)$ that satisfies the conclusions of the lemma, we prove two claims.
\begin{claim}	
For each $t\geq 0$ we have 	
	\begin{equation} \label{equ:condInIteration} 
	c_{j_t}s_t^{j_t} >  4A\sum_{  i = j_t+1}^L c_is_{t}^i\,.
	\end{equation} \end{claim}
	
\noindent \emph{Proof of Claim.}
	We apply induction on $t$; note that the $t = 0$ case is trivial.  Now suppose  \eqref{equ:condInIteration} is satisfied for some $t \geq 0$, 
	write $a = j_{t}, b = j_{t+1}$ (for ease of notation) and recall that $b = j_{t+1}$ was chosen so that 
	$c_{b}s_t^{b} = \max_{1 \leq i \leq a}\{ c_i s_t^i \}$; thus,  
	\begin{equation*} 
	c_b s_t^{b}  > \sum_{ i = b+1 }^{a} \frac{c_is_t^i}{2^{i-b}}\,, 
	\end{equation*} 
	and since $s_{t+1} = s_t/(16A)$, we have 	
	\begin{equation} \label{eq:bes1} c_b s_{t+1}^{b} = (16A)^{-b} c_b s_t^b  > (16A)^{-b}  \sum_{ i = b+1 }^{a} \frac{c_is_t^i}{2^{i-b}}  \geq 8A \sum_{i=b+1}^a c_is_{t+1}^i. \end{equation}
	 By the induction hypothesis at \eqref{equ:condInIteration}, we have $c_{a}s_t^{a} \geq 4A\sum_{i = a+1}^{ L} c_is_t^i \,$ and thus (crudely) we have
	\begin{equation} \label{eq:bes2} c_{b}s_{t+1}^{b} \geq  8A\sum_{i = a+1}^{ L} c_is_{t+1}^i. \end{equation}
	Averaging \eqref{eq:bes1} and \eqref{eq:bes2} yields
	\begin{equation*}
	c_{b}s_{t+1}^{b}  > 4A \sum_{i= b+1}^L c_is_{t+1}^i,
	\end{equation*} as desired.
	This completes the proof of the claim, by induction. \qed
\begin{claim} \label{claim:bes2}
We have that $j_1 > j_2 > \cdots \geq 1$ is a strictly decreasing sequence of integers.
\end{claim}

\noindent \emph{Proof of Claim.} By definition, we have $j_1 \geq j_2 \geq \cdots$ and so we claim that if $j_{t+1} = j_t$, then the pair $(j_t,s_t)$ would in fact satisfy \eqref{equ:HaltCond}, the halting condition for the algorithm. So suppose that $j := j_{t+1} = j_t$ and recall that $s_{t+1} = s_t/(16A)  $; 
then by \eqref{eq:j+1-def}, we have, for all $i \leq j$, 
\[ c_is_t^i \leq c_js_t^j(16A)^{i-j}. \]
	This implies 
	\begin{equation}\label{eq:bes3} 4A\sum_{i = 1}^{j-1} c_i s_{t}^i \leq c_js_t^j\left( 4A \sum_{i = 1}^{j-1} (16A)^{i-j} \right) \leq c_j s_{t}^j\,.
	\end{equation} Averaging \eqref{eq:bes3} and \eqref{equ:condInIteration} yields \eqref{equ:HaltCond} for $(\ell,s_\ast) = (j_{t},s_t)$, implying that the algorithm would have halted before proceeding to step $t+1$, a contradiction. \qed
	
\vspace{4mm}	
	
	 Thus, Claim~\ref{claim:bes2} tells us that the algorithm must terminate in at most $L$ steps and thus $s_{\ast} > s(16A)^{-(L+1)}$.
	
	To see that we have found a pair $(\ell,s_{\ast})$ that also satisfies \eqref{equ:finalCond}, we simply note that \eqref{eq:truncation-A} implies
	$\sum_{i=1}^{L} c_is_{\ast}^i >  (2A) \sum_{i > L} c_is_{\ast}^i$ and thus, averaging this with \eqref{equ:HaltCond}, yields the inequality \eqref{equ:finalCond},
	as desired.\end{proof}

\vspace{4mm}

For our main lemma of this section we make use of the (somewhat crude) inequalities, which are checked in Appendix~\ref{app:proofs}.
\begin{fact} \label{fact:negcos}
For $j \geq 3$, we have 
\begin{equation} \label{equ:Factdiff-ve} \min_{\t \in \R}\{ ( \cos \t )^j - \cos j\t \} < -1/2 ;
\end{equation}
\begin{equation} \label{equ:Factdiff+ve}  \max_{\t \in \R}\{ ( \cos \t )^j - \cos j\t \} > 1/2. \end{equation}
\end{fact}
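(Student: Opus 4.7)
The plan is to prove each inequality by exhibiting an explicit test angle, reducing everything to elementary calculus. Since $f_j(\theta) := (\cos\theta)^j - \cos(j\theta)$ is continuous in $\theta$, it suffices to find a single angle witnessing each bound.

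For the upper bound \eqref{equ:Factdiff+ve}, I would evaluate at $\theta = \pi/j$. Because $j \geq 3$ gives $\pi/j \in (0, \pi/2)$, we have $\cos(\pi/j) > 0$ and therefore $(\cos(\pi/j))^j \geq 0$; meanwhile $\cos(j \cdot \pi/j) = \cos\pi = -1$. Thus
\[
f_j(\pi/j) = (\cos(\pi/j))^j + 1 \;\geq\; 1 \;>\; \tfrac{1}{2},
\]
and this argument is uniform over all $j \geq 3$.

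For the lower bound \eqref{equ:Factdiff-ve}, the strategy is to pick $\theta$ with $\cos(j\theta) = 1$ while keeping $|\cos\theta|$ small enough that $|\cos\theta|^j < 1/2$. Concretely, for $j \geq 4$, let $k$ be the integer closest to $j/4$ and set $\theta_k = 2\pi k/j$, so that $\cos(j\theta_k) = 1$. Since $|k - j/4| \leq 1/2$, we have $|\theta_k - \pi/2| \leq \pi/j$, and hence
\[
|\cos\theta_k| \;=\; |\sin(\theta_k - \pi/2)| \;\leq\; \pi/j.
\]
Taking $j$th powers, $|\cos\theta_k|^j \leq (\pi/j)^j$. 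A direct check shows $(\pi/j)^j$ is decreasing in $j$ for $j \geq 4$ and $(\pi/4)^4 = \pi^4/256 < 1/2$, so $(\cos\theta_k)^j < 1/2$, yielding $f_j(\theta_k) < -\tfrac{1}{2}$. The single remaining case $j=3$ I would dispatch by hand at $\theta = 2\pi/3$: $f_3(2\pi/3) = (-1/2)^3 - 1 = -9/8 < -1/2$.

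There is no real obstacle here; both inequalities are ultimately witnessed at rational multiples of $\pi/j$. The only point of care is that when $j$ is even but not a multiple of $4$, the bound $|\theta_k - \pi/2| = \pi/j$ is tight, but the strict inequality $\sin(\pi/j) < \pi/j$ still gives the needed strict conclusion.
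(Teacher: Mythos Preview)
Your proof is correct and essentially identical to the paper's own proof: both evaluate at $\theta = \pi/j$ for \eqref{equ:Factdiff+ve}, and for \eqref{equ:Factdiff-ve} both pick $\theta = 2\pi k/j$ with $|\theta - \pi/2| \leq \pi/j$ to force $\cos(j\theta)=1$ while $|\cos\theta|^j \leq (\pi/j)^j < 1/2$ for $j\geq 4$, handling $j=3$ separately at a multiple of $2\pi/3$. Your write-up is in fact slightly more explicit (you compute $f_3(2\pi/3) = -9/8$ and justify the strict inequality via $\sin(\pi/j) < \pi/j$), but the ideas are the same.
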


\vspace{4mm}

As mentioned before, we apply a clever idea from the work of De Angelis and Bergweiler, Eremenko and Sokal and use the non-negativity of another ``difference function'': \[u(|z|) - u(z).\]

\begin{lemma} \label{lem:BES} For $s \in (0,1/2) $ and $L\geq 2$, let $u$ be a weakly-positive, symmetric harmonic function on $B(1,2s)$ and let $(a_j)_{j}$ be its normalized cumulant sequence. If $(a_j)_{j\geq 2}$ is a non-zero sequence and 
	\begin{equation} \label{equ:cutoff} \sum_{j\geq 2}^{L} |a_j|s^j \geq \sum_{j> L} |a_j|s^j, \end{equation} then there exists a real number $s_{\ast} > s2^{-6(L+1)}$ for which 
	$ |a_2| \geq s^{j-2}_{\ast}|a_j|, $ for all $j \geq 2$.\end{lemma}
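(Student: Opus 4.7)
The strategy is the classical trick of De Angelis and Bergweiler--Eremenko--Sokal, combined with the quantitative dominant-term device of Lemma~\ref{lem:bigTermBeforeCutoff}. After subtracting $u(1)$ if necessary (which leaves the normalized cumulants $a_j$ for $j\ge 1$ unchanged), Lemma~\ref{lem:USeries} gives the convergent expansion $U(w)=\sum_{j\ge 1} a_j\,\Re(w^j)$ for $|w|<s$. For any such $w$, weak-positivity of $u$ applied to $z=e^w$ and $|z|=e^{\Re w}$ (both of which lie in $B(1,2s)$ since $|e^w-1|\le 2|w|\le 2s$) gives $U(\Re w)\ge U(w)$. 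Writing $w=\rho e^{i\theta}$, noting that the $j=1$ term cancels because $(\cos\theta)^1=\cos\theta$, and using $\cos^2\theta-\cos(2\theta)=\sin^2\theta$, this rearranges to
\[
a_2\,\rho^2\sin^2\theta \;\ge\; -\!\sum_{j\ge 3} a_j\,\rho^j\bigl[\cos^j\theta-\cos(j\theta)\bigr]. \quad (\ast)
\]

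Next, I apply Lemma~\ref{lem:bigTermBeforeCutoff} to the non-negative sequence $(|a_j|)_{j\ge 2}$ with some absolute constant $A>4$. The strict form of its hypothesis follows from $\sum_{j=2}^L|a_j|s^j\ge\sum_{j>L}|a_j|s^j$ and the non-vanishing of $(a_j)_{j\ge 2}$, after possibly replacing $s$ by a slightly smaller value (using that the tail shrinks strictly faster than the truncated sum as $s$ decreases). This yields an index $\ell\in\{2,\ldots,L\}$ and a radius $s_\ast$, bounded below in terms of $s$, $A$, and $L$, with
\[
|a_\ell|\,s_\ast^{\ell}\;>\;A\!\!\sum_{j\ge 2,\,j\ne\ell}\!\!|a_j|\,s_\ast^{j}.
\]
If $\ell=2$, the conclusion is immediate: $|a_j|s_\ast^{j-2}<|a_2|/A\le|a_2|$ for every $j\ge 3$.

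For $\ell\ge 3$, I apply $(\ast)$ at $\rho=s_\ast$ with $\theta$ chosen via Fact~\ref{fact:negcos} so that $-a_\ell[\cos^\ell\theta-\cos(\ell\theta)]>|a_\ell|/2$ (taking the minimizing or maximizing $\theta$ according to the sign of $a_\ell$). Bounding $|\cos^j\theta-\cos(j\theta)|\le 2$ for each $j\ne\ell$ and invoking dominance,
\[
a_2\,s_\ast^2 \;\ge\; a_2\,s_\ast^2\sin^2\theta \;\ge\; |a_\ell|\,s_\ast^\ell\bigl(\tfrac12-\tfrac{2}{A}\bigr),
\]
which is positive since $A>4$. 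Hence $a_2>0$ and $|a_\ell|s_\ast^{\ell-2}\le C|a_2|$ for an absolute $C=C(A)$; the dominance bound propagates this to $|a_j|s_\ast^{j-2}\le C|a_2|$ for every $j\ge 2$. Replacing $s_\ast$ by $s_\ast':=s_\ast/C^{1/(\ell-2)}$ (which is at worst $s_\ast/C$) yields $|a_j|s_\ast'^{j-2}\le|a_2|$ for all $j\ge 2$.

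The main obstacle is purely quantitative: the $A$-dependence of $s_\ast$ in Lemma~\ref{lem:bigTermBeforeCutoff} favors $A$ small, whereas $(\ast)$ forces $A>4$, and the subsequent shrinkage by $C(A)$ must still leave $s_\ast'>s\cdot 2^{-6(L+1)}$. Arranging these constants correctly requires a modestly sharper radius bound than the stated $s\,(16A)^{-(L+1)}$ --- the algorithm in the proof of Lemma~\ref{lem:bigTermBeforeCutoff} actually terminates in at most $L-1$ steps and so produces $s_\ast\ge s/(2A\,(16A)^{L-1})$ --- together with a careful but absolute choice of $A$.
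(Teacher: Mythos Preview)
Your overall strategy---combine weak positivity (via the nonnegativity of $F(\rho,\theta)=\sum_{j\ge 2}a_j\rho^j[(\cos\theta)^j-\cos(j\theta)]$), Fact~\ref{fact:negcos}, and Lemma~\ref{lem:bigTermBeforeCutoff}---is exactly the paper's. The difference is that you take $A>4$ and try to \emph{survive} the case $\ell\ge 3$, whereas the paper takes $A=4$ and runs that case as a \emph{contradiction}. Keeping the $j=2$ term inside the sum rather than moving it to the left, one has for $\ell\ge 3$ and the $\theta_0$ from Fact~\ref{fact:negcos}
\[
F(s_\ast,\theta_0)\le -\tfrac{1}{2}|a_\ell|s_\ast^{\ell}+2\!\!\sum_{\substack{j\ge 2\\ j\ne\ell}}\!|a_j|s_\ast^{j}<0,
\]
the strict inequality being exactly $|a_\ell|s_\ast^\ell>4\sum_{j\ne\ell}|a_j|s_\ast^j$. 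This contradicts $F\ge 0$, so $\ell=2$ outright; no shrinkage of $s_\ast$ is needed, and $s_\ast>s(16\cdot 4)^{-(L+1)}=s\cdot 2^{-6(L+1)}$ holds on the nose.

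By isolating the $j=2$ term in $(\ast)$ you are forced to take $A>4$ (to make $\tfrac12-\tfrac2A>0$) and then to shrink $s_\ast$ by a further absolute factor $C$. But for any fixed $A>4$ one has $16A>64$, so $(16A)^{L}/64^{L}=(A/4)^L\to\infty$; even with your sharper step-count the resulting lower bound on $s_\ast'$ is only of order $s\cdot(16A)^{-L}$, which is strictly worse than $s\cdot 2^{-6(L+1)}$ once $L$ is large. No absolute choice of $A>4$ repairs this, so the lemma as stated is not recovered. Your argument does yield a version with $2^{-6(L+1)}$ replaced by some $c^{-(L+1)}$ with $c>64$, which would suffice downstream (since $L$ is taken absolute in Lemma~\ref{thm:maintechnical}), but the cleanest fix is simply to leave the $j=2$ term in the sum and take $A=4$.
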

\begin{proof}
	First note that the function $U(w) = u(e^w)$ is harmonic for $w \in B(0,s)$ due to the inequality $|e^w - 1| \leq 2|w|$ for $|w| \leq 1/2$ and the fact that $u$ is harmonic on $B(1,2s)$. We consider the function
	\begin{equation} \label{equ:+veDiff} U_0(\Re(w)) - U_0(w) = u_0(|e^{w}|) - u_0(e^w) \geq 0, 
	\end{equation} where the inequality follows from weak positivity. Now, writing $w = \rho e^{i\t}$ and considering the series expansion of $U_0$ around $w=0$ (Lemma~\ref{lem:USeries}), we have 
	\[ F(\rho,\t) := U_0(\rho \cos \t ) - U_0( \rho e^{i\t} ) = \sum_{j \geq 2} a_j\rho^j ( (\cos \t)^j - \cos j\t ),
	\] for all $0 \leq \rho < s $.
	Since $a_j$ is not identically $0$ for all $j\geq 2$, we may apply Lemma~\ref{lem:bigTermBeforeCutoff} to the sequence $(|a_j|)_{j\geq 2}$ with $A = 4$, to get an integer $\ell \in [L]$ and a real number $s_{\ast} > s2^{-6(L+1)}$ so that
	\begin{equation} \label{equ:starBiggest} |a_{\ell}|s_{\ast}^{\ell} > 4 \sum_{2\leq i \not= \ell} |a_i|s_{\ast}^i.
	\end{equation}
	We now use weak-positivity to see that $\ell = 2$. For this, assume $\ell > 2$ and apply Fact~\ref{fact:negcos} to find a $\t_0$ for which 
	\begin{equation} \label{equ:j0term}  a_{\ell}\big(( \cos \t_0 )^{\ell} - \cos \ell\t_0 \big) \leq -|a_{\ell}|/2. \end{equation} We write 
	\[ F(s_{\ast},\t_0) = a_{\ell} s_{\ast}^{\ell}\big(( \cos \t_0 )^{\ell} - \cos \ell\t_0 \big) + \sum_{2 \leq j \not= \ell} a_{j}s_{\ast}^{j}\big(( \cos \t_0 )^{j} - \cos j \t_0 \big)
	\] and apply (\ref{equ:j0term}) to bound the first term on the right-hand-side and apply the triangle-inequality to bound the sum. We obtain
	\[ F(s_0,\t_0) \leq \frac{-|a_{\ell}|s_{\ast}^{\ell}}{2} + 2 \sum_{2 \leq j \not= \ell } |a_j|s_{\ast}^j < 0,
	\] where the last inequality follows from (\ref{equ:starBiggest}). However this contradicts the positivity of $F$ (\ref{equ:+veDiff}). We therefore conclude that $\ell =2$ and so, from (\ref{equ:starBiggest}) again, we have that
	\[  |a_{2}|s_{\ast}^{2} > 4 \sum_{i \geq 3} |a_i|s^i_{\ast} \geq 4|a_j|s^j_{\ast},
	\] for any $j \geq 3$, as desired.  
\end{proof}

\section{Proof of Lemma~\ref{thm:maintechnical} : the final stroke} \label{sec:technical-proof}

In this section we combine the ingredients from the previous sections to prove Lemma~\ref{thm:maintechnical}. What we state here is slightly stronger than what we need
but we make these bounds explicit for use in later work.

\begin{lemma} \label{thm:maintechnical}
	For $\eps \in (0,1)$, $b \geq 0$ and $n \geq 1$, let $X \in \{0,\ldots,n\}$ be a random variable with standard deviation $\s >0$, logarithmic potential $u = u_X$ and  normalized cumulant sequence $(a_j)_{j\geq 1}$.  If $u$ is $b$-decreasing and harmonic in $B(1,\eps)$ and 
\begin{equation}\label{eq:not-point-mass} \sum_{j\geq 2} |a_j|\left(\eps/32\right)^j > b, \end{equation} then  $\psi_{X^{\ast}}$, the characteristic function of $X^{\ast} := (X-\mu)\s^{-1}$, satisfies
\[ \psi_{X^{\ast}}(\xi) = \exp( -\xi^2/2 + R(\xi) ), 
\] where 
\begin{enumerate}
\item $R(0) = R^{(1)}(0) = R^{(2)}(0) = 0$ and 
\[ |R^{(\ell)}(0)| \leq \ell!(c_2\s)^{2-\ell},\] for all $\ell \geq 3$.
\item In particular\footnote{We can take the constants $c_1 = 2^{3246} $, $c_2 = 2^{-3246}$ .},
\[ |R(\xi)|\leq   \frac{ c_1|\xi|^3}{\eps \s }  \]
for all $\xi \in \C$ with $|\xi| \leq  c_2\eps \sigma$.  
\end{enumerate}
\end{lemma}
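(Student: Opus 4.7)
\textit{Cumulant expansion of the characteristic function.} The first step is to express $\psi_{X^*}$ in terms of the normalized cumulant sequence $(a_j)$. Since $u$ is harmonic on $B(1,\eps)$, Lemma~\ref{lem:USeries} gives $U(w) = \sum_{j\ge 1} a_j \Re(w^j)$ on a small disk about $0$, so $U = \Re g$ for the holomorphic $g(w) = \sum_{j\ge 1} a_j w^j$. Moreover $f_X(e^w)$ is non-vanishing on the same disk, and $\log|f_X(e^w)| = U(w) = \Re g(w)$; together with $f_X(1) = 1 = e^{g(0)}$ this forces $f_X(e^w) = \exp\!\bigl(\sum_{j\ge 1} a_j w^j\bigr)$. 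Setting $w = i\xi/\sigma$ and combining with the centering factor $e^{-i\mu\xi/\sigma}$ cancels the $j = 1$ term (since $a_1 = \mu$), while $a_2 = \sigma^2/2$ turns the $j = 2$ term into $-\xi^2/2$. Thus
\[ \psi_{X^*}(\xi) = \exp\!\Bigl(-\xi^2/2 + R(\xi)\Bigr), \qquad R(\xi) := \sum_{\ell \ge 3} a_\ell\, i^\ell\, \sigma^{-\ell}\, \xi^\ell. \]
The vanishing conditions $R(0) = R'(0) = R''(0) = 0$ are immediate, and $R^{(\ell)}(0) = \ell!\, a_\ell i^\ell \sigma^{-\ell}$ for $\ell \ge 3$, so (1) reduces to a bound of the form $|a_\ell| \le \sigma^2/(c\eps)^{\ell - 2}$ for some absolute $c>0$.

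\textit{Bounding the cumulants -- the main step.} The heart of the proof is the interplay of Lemmas~\ref{lem:CumulantDecay} and \ref{lem:BES} at a common scale $\eps' := \eps/32$. Weak positivity and symmetry of $u = \log|f_X|$ are automatic, and the required harmonicity/$b$-decreasing on $B(1, 2^{4}\eps') = B(1,\eps/2) \subset B(1,\eps)$ is given; the mass hypothesis $\sum_{j\ge 2}|a_j|(\eps/32)^j > b$ is precisely~\eqref{eq:not-point-mass}. Lemma~\ref{lem:CumulantDecay} then yields the tail decay
\[ \sum_{j \ge L} |a_j|(\eps/32)^j \le C\cdot 2^{-L}\sum_{j \ge 2}|a_j|(\eps/32)^j \qquad (L \ge 2). \]
Choose $L_0 := \lceil \log_2(2C)\rceil$, an \emph{absolute} constant; then $\sum_{j > L_0}|a_j|(\eps/32)^j \le \sum_{j = 2}^{L_0}|a_j|(\eps/32)^j$, which is exactly the hypothesis of Lemma~\ref{lem:BES} at scale $s = \eps/32$, $L = L_0$. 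That lemma then produces $s_* > (\eps/32)\cdot 2^{-6(L_0+1)} =: c_0\eps$ for which $|a_2| \ge s_*^{j-2}|a_j|$ for all $j \ge 2$. Since $|a_2| = \sigma^2/2$, this gives the decisive bound $|a_\ell| \le \sigma^2/(2(c_0\eps)^{\ell-2})$ for $\ell \ge 3$, equivalently
\[ |R^{(\ell)}(0)| \;=\; \ell!\,|a_\ell|\,\sigma^{-\ell} \;\le\; \tfrac12\,\ell!\,(c_2\eps\sigma)^{2-\ell} \]
for a suitable absolute $c_2 > 0$, which yields (1).

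\textit{From derivative bounds to the uniform estimate.} The above bounds imply the power series of $R$ has radius of convergence at least $c_2\eps\sigma$; since $R$ vanishes to order~$3$ at the origin, a geometric sum of Taylor's formula gives, for $|\xi| \le (c_2/2)\eps\sigma$,
\[ |R(\xi)| \;\le\; \sum_{\ell\ge 3}\frac{|R^{(\ell)}(0)|}{\ell!}|\xi|^\ell \;\le\; (c_2\eps\sigma)^2\sum_{\ell\ge 3}\Bigl(\frac{|\xi|}{c_2\eps\sigma}\Bigr)^{\ell} \;\le\; \frac{2|\xi|^3}{c_2\eps\sigma}, \]
which is (2) with $c_1 = 2/c_2$ after adjusting the admissible range in $|\xi|$.

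\textit{The main obstacle} is calibrating the two technical lemmas at the same scale with a cutoff $L_0$ that is an absolute constant, so that the BES output $s_* > s\cdot 2^{-6(L_0+1)}$ remains linear in $\eps$; the tension is that $L_0$ must be large enough for Lemma~\ref{lem:CumulantDecay} to reverse into the BES hypothesis, yet small enough that the exponential loss $2^{-6L_0}$ is absorbed into an absolute constant. Once this calibration is done, the identification $|a_2| = \sigma^2/2$ supplies the $\sigma^2$ in the numerator and everything else is bookkeeping; in particular the vanishing of $R$ to order~$3$ and the cubic $|\xi|^3$ appearing in (2) come for free from the cumulant expansion set up in the first paragraph.
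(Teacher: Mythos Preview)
Your proposal is correct and follows essentially the same route as the paper: expand $\psi_{X^\ast}$ via the cumulant series, apply Lemma~\ref{lem:CumulantDecay} at scale $\eps/32$ to get tail decay, choose an absolute cutoff $L_0$ so the BES hypothesis holds, apply Lemma~\ref{lem:BES} to dominate every $|a_\ell|$ by $|a_2|=\sigma^2/2$ at a scale $s_\ast$ linear in $\eps$, and sum the resulting geometric series. The only cosmetic difference is that you keep the $\eps$ in the derivative bound of part~(1), writing $|R^{(\ell)}(0)|\le \tfrac12\ell!(c_2\eps\sigma)^{2-\ell}$; this is in fact what the paper's proof actually establishes as well (via $s_\ast>2^{-3245}\eps$), and is what is needed for part~(2).
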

\begin{proof}
	Let $\psi_X(\xi) = \EE_X e^{i \xi X }$ be the characteristic function of $X$, and note that
	\[ \psi_X(\xi) = \exp\left( \sum_{j\geq 1} \frac{\kappa_j}{j!} (i\xi)^j \right) = \exp\left( \sum_{j\geq 1} a_j (i\xi)^j \right), 
	\] where $\k_j$ is the $j$th cumulant of $X$ and $(a_j)_{j}$ is the normalized cumulant sequence. Here, this expansion is valid for all $|\xi| <  \eps/2$ since harmonicity of $u$ in $B(1,\eps)$ implies analyticity of $\psi$ in $B(0,\eps/2)$ due to the inequality $|1 - e^w| \leq 2|w|$ for $|w| < 1/2$.
	Now note that $\psi_{X^{\ast}}(\xi) = \psi_X(\xi/\s)e^{-i\frac{\mu \xi}{\s} }$ is the characteristic function of $X^{\ast}$. 
	Using the fact that $a_1 = \mu$ and $a_2 = -\s/2$, as noted at \eqref{eq:k1-is-mean} and \eqref{eq:k2-is-var}, we have 
	\begin{equation} \label{equ:CharFunctiofZ} 
	\psi_{X^\ast}(\xi) = \exp\left( -\xi^2/2 + \sum_{j\geq 3} \frac{a_j}{\s^j} (i\xi)^j \right)
	\end{equation} and so we define 
	\begin{equation}\label{eq:R-expansion} R(\xi) :=  \sum_{j\geq 3} \frac{a_j}{\s^j} (i\xi)^j.
	\end{equation} We now apply Lemma~\ref{lem:CumulantDecay} to bound $R$. To see that we may apply this lemma, note that
	\eqref{eq:not-point-mass} implies condition \eqref{equ:condOnb} in Lemma~\ref{lem:CumulantDecay}; the logarithmic potential 
	$u = u_X$ is weakly-positive and symmetric in $B(1,\eps)$ (as noted in Section~\ref{sec:defs});
	and $u$ is $b$-decreasing and harmonic in $B(1,\eps)$, by assumption. Therefore
	\[ \frac{\sum_{j\geq L} |a_j|(\eps/32)^j }{\sum_{j\geq 2} |a_j|(\eps/32)^j} \leq C\cdot 2^{-L}, 
	\] for all $L\geq 2$. Now, if we choose $L = 2 +  \log_2 C$ we have that
	
	\begin{equation} \label{equ:boundOnCumulants} 
	\sum_{j=2}^{L} |a_j| (\eps/32)^j > \sum_{j > L} |a_j| (\eps/32)^j \,,   
	\end{equation}
	and so we may apply Lemma~\ref{lem:BES} with $L = 539$ and $s = \eps/32 $ 
	to obtain a $s_{\ast} > 2^{-3245} \eps$ for which 
	\begin{equation} \label{eq:tamed-cumulants} \s^2 = |a_2| > s^{j-2}_{\ast}|a_j|. \end{equation} 
	And so for $j \geq 3$ the $j$th term in the expansion of $R(\xi)$ is 
	\[ \frac{|R^{(j)}(0)|}{j!} = |a_j|\s^{-j} \leq (s_{\ast}\s)^{2-j}, 
	\]and so, for $|\xi| < s_{\ast}\s$, we have
	\[ |R(\xi)| \leq \sum_{j \geq 3} \frac{|a_j||\xi|^j}{\s^j} \leq  \sum_{j\geq 3} \frac{|\xi|^j}{(s_{\ast} \s )^{j-2}}
	 = \frac{|\xi|^3}{s_{\ast} \s(1 - |\xi|/(s_{\ast}\s))}\,. \] 
	This means that we can factor 
	\[ \psi_{X^\ast}(\xi) = e^{-\xi^2/2}e^{R(\xi)}, \]
	where $|R(\xi)| < 2\frac{|\xi|^3}{s_{\ast} \s } \leq \frac{2^{3246}}{\eps \sigma},$ for $|\xi| < (s_{\ast}\s)/2 \leq 2^{-3246} \eps \sigma$. This completes the proof of Lemma~\ref{thm:maintechnical}.
\end{proof}

\section{Proofs of Theorems~\ref{thm:Polysector} and \ref{thm:logn}} \label{sec:proofs-of-thms}

In this section we use Lemma~\ref{lem:b-decreasing} along with our main technical lemma, Lemma~\ref{thm:maintechnical} to deduce our theorems on univariate polynomials. Before we finish these proofs, we need to quickly derive our ``Fourier-inversion'' lemma,
which allows us to conclude that $X$ is approximately normal based on the hypothesis that the characteristic function $\psi_X$ is approximately 
the characteristic function of a normal.

\subsection{Fourier inversion}
In this short subsection, we derive the following basic ``Fourier inversion'' tool.

\begin{lemma}\label{lem:fourier-inversion} Let $X \in \R$ be a random variable with characteristic function $\psi$. If 
\[ \psi(\xi) = \exp\left( -\xi^2 /2+ R(\xi) \right),
\] where $|R(\xi)| \leq \eta|\xi|^3$ for all $|\xi| < \tau$ then 

\[ \sup_{t \in \R} \left| \PP( X \leq t ) - \PP(Z \leq t) \right| \leq 2^9\max\{\eta,\tau^{-1}\} ,
\] where $Z \sim N(0,1)$.\end{lemma}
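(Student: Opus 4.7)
My plan is to invoke the classical Berry--Esseen smoothing inequality and to balance the two error terms by choosing the cutoff frequency with care. Recall Esseen's smoothing lemma: for any CDF $F$, any absolutely continuous CDF $G$ with density bounded by $M$, and any $T > 0$,
\[ \sup_{t\in\R} |F(t) - G(t)| \leq \frac{1}{\pi}\int_{-T}^T \left|\frac{\hat F(\xi) - \hat G(\xi)}{\xi}\right| d\xi + \frac{24 M}{\pi T}. \]
I will apply this with $F$ being the CDF of $X$, $G$ the standard normal CDF (so $M = 1/\sqrt{2\pi}$ and $\hat G(\xi) = e^{-\xi^2/2}$), and a careful choice of $T$ depending on both $\eta$ and $\tau$.

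The key calculation is to bound the integrand on $|\xi| < \tau$. Writing $\psi(\xi) = e^{-\xi^2/2}\, e^{R(\xi)}$ and using the elementary inequality $|e^z - 1| \leq |z|\, e^{|z|}$ (valid for any $z \in \C$), I obtain, for $|\xi|<\tau$,
\[ \frac{|\psi(\xi) - e^{-\xi^2/2}|}{|\xi|} \;\leq\; \eta\, \xi^2\, \exp\!\bigl(-\xi^2/2 + \eta|\xi|^3\bigr). \]
The crucial observation is that whenever $|\xi| \leq (4\eta)^{-1}$, the exponent satisfies $-\xi^2/2 + \eta|\xi|^3 \leq -\xi^2/4$, so the integrand is dominated by $\eta\, \xi^2\, e^{-\xi^2/4}$, which is integrable against a Gaussian tail on all of $\R$.

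I therefore set $T := \min\{\tau,\,(4\eta)^{-1}\}$, so that for all $|\xi| < T$ both the hypothesis $|R(\xi)| \leq \eta|\xi|^3$ is available (from $|\xi| < \tau$) and the Gaussian domination above holds. The integral is then bounded by $\int_{\R} \eta\, \xi^2\, e^{-\xi^2/4}\, d\xi = 4\sqrt{\pi}\,\eta$, contributing $O(\eta)$ to the Esseen bound. Meanwhile, $T^{-1} = \max\{\tau^{-1},\, 4\eta\} \leq 4\max\{\eta,\tau^{-1}\}$, so the tail term $24M/(\pi T)$ contributes $O(\max\{\eta,\tau^{-1}\})$. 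Adding the two contributions and tracking the explicit constants $1/\pi$, $4\sqrt{\pi}$, $24/(\pi\sqrt{2\pi})$, and the factor of $4$ from $T^{-1}$, the final constant comes out well below $2^9$.

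The argument is largely a routine application of Esseen's lemma; the only real nuance lies in the choice of $T$. In particular, one cannot simply take $T = \tau$: when $\eta\tau$ is not small the exponential factor $e^{\eta|\xi|^3}$ in the integrand overwhelms the Gaussian $e^{-\xi^2/2}$ near $\xi=\tau$, and the integral blows up. Capping $T$ at $(4\eta)^{-1}$ is exactly what prevents this, and it is the reason one obtains the sharp $\max\{\eta,\tau^{-1}\}$ form of the bound (rather than a weaker expression like $\eta^{1/3} + \tau^{-1}$ one would get from a naive balance).
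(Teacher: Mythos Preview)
Your proposal is correct and follows essentially the same approach as the paper: both apply the Esseen/Feller smoothing inequality with cutoff $T = \min\{\tau,\, c\eta^{-1}\}$ (the paper takes $c=1/8$, you take $c=1/4$) and use that $\eta|\xi|^3 \le \xi^2/4$ on this range so that the Gaussian factor dominates. The only minor difference is that the paper splits the integral at $a=\eta^{-1/3}$ and uses $|e^z-1|\le 4|z|$ on the inner piece, whereas your uniform bound $|e^z-1|\le |z|e^{|z|}$ avoids the split entirely; this is a small simplification but not a different idea.
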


\vspace{4mm}

Following Lebowitz, Pittel, Ruelle, and Speer, we use the following quantitative result which can found in the textbook of Feller \cite[pg.~538]{feller}. 

\begin{lemma} \label{lem:Inversion} Let $Z \sim N(0,1)$ be a standard normal, let $X \in \R$ be a random variable and let $\psi(\xi)$ be its
	characteristic function. Then, for all $T >0$, we have 
	\begin{equation} \label{equ:Inverson} \sup_{t \in R}| \PP( X \leq t)  - \PP(Z \leq t) | \leq \frac{1}{\pi} \int_{-T}^T \left| \frac{ \psi(\xi) - e^{-\xi^2/2} }{\xi} \right| \, d\xi + \frac{4}{T}. \end{equation} 
\end{lemma}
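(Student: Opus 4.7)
This statement is the classical Esseen smoothing inequality specialized to the case where the reference distribution is the standard normal, so my plan is to follow Esseen's classical argument (as reproduced in Feller's textbook) and keep track of constants. Since the lemma is attributed to Feller in the text, the most honest proposal is to cite the textbook proof, but let me outline the structure in case one wants a self-contained version.

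First I would introduce a smoothing kernel. Let $K_T$ be the Fej\'er-type kernel $K_T(x) = (1-\cos(Tx))/(\pi T x^2)$, which is nonnegative with $\int K_T = 1$ and whose Fourier transform is the triangle function $\hat K_T(\xi) = (1 - |\xi|/T)_+$ supported on $[-T,T]$. Let $F$ and $G$ denote the distribution functions of $X$ and $Z$ respectively, and set $\Delta(t) := F(t) - G(t)$, which vanishes at $\pm \infty$. Set $\Delta_T := \Delta * K_T$.

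Next I would express $\Delta_T$ via Fourier inversion. Because $\Delta$ is a bounded function of bounded variation vanishing at infinity, its distributional derivative $F' - G'$ has characteristic function $\psi(\xi) - e^{-\xi^2/2}$. Applying the inversion formula to $\Delta$ itself (with the division by $i\xi$ accounting for integration), and using that $\hat K_T$ truncates to $[-T,T]$, one gets
\[ \Delta_T(t) = \frac{1}{2\pi}\int_{-T}^{T} \frac{\psi(\xi) - e^{-\xi^2/2}}{i\xi}\,\hat K_T(\xi)\, e^{-it\xi}\, d\xi . \]
Taking absolute values, using $|\hat K_T| \leq 1$, and symmetrizing in $\xi$ (the integrand is conjugate-symmetric because $\psi(-\xi) = \overline{\psi(\xi)}$), yields $\sup_t |\Delta_T(t)| \leq \frac{1}{\pi}\int_{-T}^T |(\psi(\xi)-e^{-\xi^2/2})/\xi|\, d\xi$, which is the main term on the right-hand side of \eqref{equ:Inverson}.

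The third step is to control the smoothing error $|\Delta(t) - \Delta_T(t)|$ in terms of $\sup_t |\Delta(t)|$ and $T$. Here one exploits that $G$ has density bounded by $M := 1/\sqrt{2\pi}$. A standard calculation (choosing $t$ near a value where $|\Delta|$ is nearly maximized, splitting the convolution into a neighborhood of $0$ and its complement, and using $\int |x| K_T(x)\, dx = O(\log T / T)$ or the sharper moment bounds of Esseen's variant of the kernel) gives $\sup_t |\Delta(t)| \leq \sup_t |\Delta_T(t)| + cM/T$ for an explicit $c$. With $M = 1/\sqrt{2\pi}$ one checks $cM \leq 4$, producing the $4/T$ term.

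The main obstacle is the constant $4$: the bare smoothing argument produces a constant of the form $cM$ with $c$ coming from the kernel's moments, and extracting $4$ requires either Esseen's optimized kernel choice or a careful symmetrization around the maximizing point $t_0$ of $|\Delta|$ (using that $\Delta$ is monotone in a neighborhood of $t_0$ up to jumps of $F$, since $G$ is smooth). In practice I would just cite Feller's textbook (p.\ 538), which carries out exactly this optimization.
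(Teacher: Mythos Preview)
The paper does not prove this lemma at all; it simply quotes it as a known result from Feller's textbook (p.~538), exactly as you suggest in your final sentence. Your outline of the Esseen smoothing argument is a correct sketch of the standard proof, but goes beyond what the paper itself provides.
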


\vspace{4mm}

We may now easily derive Lemma~\ref{lem:fourier-inversion}, our Fourier inversion lemma.

\vspace{4mm}

\noindent\emph{Proof of Lemma~\ref{lem:fourier-inversion}:} 
We apply Lemma~\ref{lem:Inversion} with $T = \min\{ \eta^{-1}/8,\tau \}$ to obtain
\begin{equation} \label{eq:smoothing}
\sup_{t \in \R}| \PP(X \leq t) - \PP(Z \leq t) | \leq \frac{1}{\pi} \int_{-T}^T \left| \frac{ \psi(\xi) - e^{-\xi^2/2} }{\xi} \right| \, d\xi + \frac{4}{T}.
\end{equation}
First note that we may assume $\eta < 1$, otherwise the theorem is trivial.
Let $I := \int_{-T}^T \left| \frac{ \psi(\xi) - e^{-\xi^2/2} }{\xi} \right|\,d\xi $ be the integral in \eqref{eq:smoothing} and set $a := \eta^{-1/3}$. 
We bound the $I$ by breaking the integral into two ranges: $|\xi| \in [0,a]$ and $|\xi| \in [a,T]$. For $|\xi| \leq a$, we bound the integrand \begin{equation} \label{eq:smooth-low-range}
\left|\frac{\psi(\xi) - e^{-\xi^2 /2}}{\xi} \right| = e^{-\xi^2/2}\left|\frac{e^{R(\xi)} - 1}{\xi}\right| 
\leq 4\eta e^{-\xi^2 / 2} |\xi|^2, \end{equation} since $|e^z - 1| \leq 4|z|$ for $|z| \leq 1$. For $a \leq |\xi| \leq  T$, we use the fact that $a = \eta^{-1/3} \geq 1$ to bound the integrand 
\[ \left|\frac{\psi(\xi) - e^{-\xi^2 /2}}{\xi} \right|  \leq 2e^{-\xi^2/4} \cdot e^{-\xi^2/4 + |R(\xi)|}\, , \]
where 
\[ -\xi^2/4 + |R(\xi)| = |\xi|^2\left(-\frac{1}{4} + \eta|\xi|\right) \leq - \xi^2 / 8 \, \]
and the last line holds due to the fact that $\eta|\xi| \leq \eta T \leq 1/8$, by the choice of $T$.
So, for $|\xi| \in [a,T]$, we have 
\begin{equation} \label{eq:smooth-up-range} \left|\frac{\psi(\xi) - e^{-\xi^2 /2}}{\xi} \right|\leq 2 e^{-\xi^2 / 4} e^{-\xi^2 / 8} \leq 2 e^{-\xi^2 / 4} e^{- a^2/8 } \leq  32\eta e^{-\xi^2 / 4}, \end{equation}
due to the fact that 
$\exp(- |x|^{2/3} / 8) \leq 16/|x|$.  Using \eqref{eq:smooth-low-range} and \eqref{eq:smooth-up-range} we can bound $I$ by 
\begin{equation} I \leq  8\eta \int_0^a e^{-\xi^2 / 2} |\xi|^2 \, d\xi + 64\eta \int_a^T e^{-\xi^2 / 4} \, d\xi \leq 2^9\eta , \end{equation}
where we have used the facts 
\[\int_{-\infty}^{\infty} e^{-t^2/2}|t|^2\, dt = \sqrt{2\pi}\, \, \,  \textit{    and     }\, \, \,  \int_{-\infty}^{\infty} e^{-t^2/4}\, dt = 2\sqrt{2\pi} .\]

Putting this together with \eqref{eq:smoothing} gives the bound 
$$\sup_{t \in \R} |\PP(X \leq t) - \PP(Z \leq t)| \leq  \pi^{-1}I + 32 \max\{\eta,\tau^{-1}\} \leq 2^9 \max\{\eta,\tau^{-1}\},$$
as desired.\qed

\vspace{4mm}

We note that Lemma~\ref{lem:fourier-inversion}, along with our main technical Lemma~\ref{thm:maintechnical}, implies the following 
general result for random variables $X \in \{0,\ldots,n\}$.

\begin{corollary}\label{cor:CLT}
For $\eps \in (0,1)$ and $b \geq 0$, let $X \in \{0,\ldots,n\}$ be a random variable with logarithmic potential $u_X$ and normalized cumulant sequence $(a_j)_{j}$. If $u_X$ is $b$-decreasing and harmonic in $B(1,\eps)$ and 
\begin{equation}\label{eq:not-point-mass-inCLT} \sum_{j\geq 2} |a_j|(\eps/32)^j > b, \end{equation}
then\footnote{Here, the implicit constant Corollary~\ref{cor:CLT} can be taken to be $2^{3255}$.} 
$$\sup_{t \in \R} |\PP(X \leq t) - \PP(Z \leq t)| = O\left(\frac{1}{\eps \sigma}\right) \,,$$ where $Z \sim N(0,1)$ is a standard normal.
\end{corollary}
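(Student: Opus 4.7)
The plan is to chain together Lemma~\ref{thm:maintechnical} (which gives us a sharp description of the characteristic function) with Lemma~\ref{lem:fourier-inversion} (which converts such a description into a pointwise distance estimate). No new analytic ideas are required at this step; all the heavy lifting has already been done.

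First, I would verify that the hypotheses of Lemma~\ref{thm:maintechnical} hold. The function $u_X$ is harmonic and $b$-decreasing in $B(1,\eps)$ by assumption, and the normalized-cumulant condition \eqref{eq:not-point-mass-inCLT} coincides exactly with condition \eqref{eq:not-point-mass}. Weak-positivity and symmetry of $u_X$ hold automatically since $f_X$ is a probability generating function, as recorded in Section~\ref{sec:defs}. Thus we may invoke Lemma~\ref{thm:maintechnical} to conclude that the characteristic function of $X^{\ast} = (X-\mu)/\sigma$ has the form
\[ \psi_{X^{\ast}}(\xi) = \exp\!\left(-\xi^2/2 + R(\xi)\right), \]
with $|R(\xi)| \leq c_1|\xi|^3/(\eps\sigma)$ valid in the range $|\xi| \leq c_2\eps\sigma$.

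Next, I would feed this bound into Lemma~\ref{lem:fourier-inversion} with the parameters $\eta := c_1/(\eps\sigma)$ and $\tau := c_2\eps\sigma$. The hypothesis $|R(\xi)| \leq \eta|\xi|^3$ for $|\xi|<\tau$ is satisfied, so the lemma gives
\[ \sup_{t\in\R}\bigl|\PP(X^{\ast}\leq t)-\PP(Z\leq t)\bigr| \leq 2^{9}\max\{\eta,\tau^{-1}\} = 2^{9}\max\{c_1,c_2^{-1}\}\cdot\frac{1}{\eps\sigma}, \]
which is exactly the $O(1/(\eps\sigma))$ bound claimed (with an explicit constant on the order of $2^{3255}$, consistent with the footnote). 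Interpreting the statement of the corollary as concerning $X^{\ast}$ (matching the convention of Theorems~\ref{thm:logn} and \ref{thm:Polysector}), this completes the argument.

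The only thing to be careful about is the bookkeeping of the constants: one must track that the $\eps/32$ appearing in \eqref{eq:not-point-mass-inCLT} matches the $\eps$ appearing in the hypothesis of Lemma~\ref{thm:maintechnical} (there the corresponding condition was already written with a $/32$ factor, so nothing needs to be rescaled), and that the constants $c_1,c_2$ from Lemma~\ref{thm:maintechnical} are absolute. Since these are all inherited from earlier lemmas, no obstacle is encountered. In short, the corollary is purely a packaging result: it bundles the two main tools developed in Sections~\ref{sec:technical-proof} and the opening of Section~\ref{sec:proofs-of-thms} into a single user-friendly statement ready for the proofs of Theorems~\ref{thm:logn} and \ref{thm:Polysector}.
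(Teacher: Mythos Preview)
Your proposal is correct and matches the paper's approach exactly: the paper simply remarks that Corollary~\ref{cor:CLT} follows by combining Lemma~\ref{thm:maintechnical} with Lemma~\ref{lem:fourier-inversion}, which is precisely what you do. Your tracking of the constants ($2^9 \cdot \max\{c_1,c_2^{-1}\} = 2^9 \cdot 2^{3246} = 2^{3255}$) and your observation that the statement should concern $X^{\ast}$ rather than $X$ are both accurate.
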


\subsection{Proof of Theorem~\ref{thm:Polysector} } 
We are now ready to prove our main theorem on random variables with roots avoiding a sector.

\vspace{4mm}

\noindent \emph{Proof of Theorem~\ref{thm:Polysector}.} Let $X \in \{0,\ldots,n\}$ be a random variable for which its probability generating function $f_X$ has no roots
in the sector $S(\delta)$. This means that its logarithmic potential $u(z) = u_X(z)$ is a weakly-positive, symmetric and harmonic function on $S(\delta)$.
Also, since $f_X$ is a polynomial, we have that $u(z) = O(\log|z|)$ and $u(1/z) = O(\log|z|)$ as $z \rightarrow \infty$. Finally note that we may assume $\s >0$,
otherwise the statement of Theorem~\ref{thm:Polysector} is meaningless.

We first look to apply Lemma~\ref{lem:b-decreasing} to show that $u$ is decreasing in a neighborhood of $1 \in \C$; that is, $b$-decreasing for $b=0$.
For this, note that for all $R> 1$ we have that $S_R(\delta/2) \subseteq S(\delta/2)$ and so $u$ is harmonic \emph{in a neighborhood} of $S_R(\delta/2)$.
Set $r := 2$ and we check, in accordance with \eqref{equ:bcondition}, that
\[ \left( \frac{2}{R}\right)^{-1/\delta}\max_{z \in S_R^{\ast}(0,\delta) } |u(z)| = O\left( R^{-1/\delta} \log R\right) \rightarrow 0,
 \] 
 as $R \rightarrow \infty$, due to the growth condition on $u$. Thus we may apply Lemma~\ref{lem:b-decreasing} to learn that $u$ is $b$-decreasing in $S_{2}(0,\delta/2)$ for every $b \geq 0$, and therefore is \emph{decreasing}. 

We now look to apply Corollary~\ref{cor:CLT} to finish the proof of the theorem. For this
we only have to check the condition at \eqref{eq:not-point-mass-inCLT}, which easily follows from the fact that $\s >0$.
Since $u$ is $0$-decreasing and harmonic in $B(1,\delta/4)$, we may apply Corollary~\ref{cor:CLT} with $\eps = \delta/4$ to finish the proof. \qed
\vspace{4mm}

\subsection{Proof of Theorem~\ref{thm:logn}}

The proof of Theorem~\ref{thm:logn} is similar to Theorem~\ref{thm:Polysector}. In the proof of Theorem~\ref{thm:logn} we work in a tiny truncated sector which we place in the zero-free ball $B(1,\eps)$ and then estimate 
$u_X(z)$ on the ends $S^{\ast}_R(\delta)$ of the boundary $\p S_R(\delta)$. This estimate is the content of the following lemma.

\begin{lemma} \label{lem:size-u-in-ball} For $\delta \in (0,1/2)$, let $X \in \{0,\ldots,n\}$ be a random variable
for which $f_X$ has no roots in the ball $B(1,\delta)$.  For $\eps\in (0,\delta/4)$, we have 
\[ \max_{z \in S^{\ast}_R(\eps)} |u_X(z)| \leq 7n \log(4/\delta). \]\end{lemma}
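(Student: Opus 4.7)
My plan is to use the root factorization of $u_X$ from equation~(\ref{equ:rootFormOfu}): after using $u_X(1)=0$ to eliminate $c_X$ and canceling the $N_X\log|z|$ terms, the formula collapses to the clean expression
\[
u_X(z) = \sum_\zeta \log\frac{|z-\zeta|}{|1-\zeta|},
\]
where the sum runs over the at most $n$ roots $\zeta$ of $f_X$, counted with multiplicity. The problem then reduces to a per-term bound: I would aim to show that each summand satisfies $\bigl|\log(|z-\zeta|/|1-\zeta|)\bigr|\leq 7\log(4/\delta)$, after which the triangle inequality over $\leq n$ terms gives the lemma.

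For the upper bound on each summand, the triangle inequality yields $|z-\zeta|/|1-\zeta|\leq 1+|z-1|/|1-\zeta|\leq 2$, using $|z-1|<\delta\leq|1-\zeta|$ (valid because the truncated sector $S_R(\eps)$ is placed inside the zero-free ball $B(1,\delta)$). For the lower bound, I would split by the size of $|1-\zeta|$. When $|1-\zeta|\geq 2\delta$, the reverse triangle inequality immediately gives $|z-\zeta|\geq |1-\zeta|/2$, so the ratio is at least $1/2$ and $\log\geq-\log 2$. When $|1-\zeta|\in[\delta,2\delta]$, the root sits in a thin annulus hugging $\partial B(1,\delta)$, and here I would use the fact that $\arg z\in[-\eps,\eps]$ with $\eps<\delta/4$ combined with the placement of $S_R(\eps)$ inside $B(1,\delta)$ with a margin to argue that the sector structure prevents $z$ from approaching $\zeta$, forcing $|z-\zeta|\geq c\delta$ for an absolute constant $c$. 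The two bounds together give $|\log(|z-\zeta|/|1-\zeta|)|\leq 7\log(4/\delta)$, and summing over the at most $n$ roots gives the claim.

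The main obstacle will be the near-boundary case $|1-\zeta|\in[\delta,2\delta]$: roots sitting just outside $\partial B(1,\delta)$ can in principle come arbitrarily close to a point $z\in S_R^\ast(\eps)$, and securing the lower bound $|z-\zeta|\gtrsim \delta$ requires a careful use of both the thinness condition $\eps<\delta/4$ and an implicit margin on $R$ coming from $S_R(\eps)\subset B(1,\delta)$. If this geometric step turns out too fragile, a conceptually cleaner alternative is a Poisson-integral argument on $B(1,\delta)$: the vanishing $\int_{\partial B(1,\delta)}u_X=0$ (from $u_X(1)=0$ by the mean value property) together with the upper bound $u_X\leq n\log(1+\delta)\leq n\delta$ on $\partial B(1,\delta)$ (via weak positivity applied to points with $|z|\leq 1+\delta$) forces the $L^1$-norm of $u_X$ on the boundary to be $O(n\delta^2)$. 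The standard Poisson kernel estimate then yields $|u_X(z)|\leq 2n\delta(\delta+|z-1|)/(\delta-|z-1|)$, which is $O(n\log(4/\delta))$ provided $|z-1|$ stays a margin of $\Omega(\delta/\log(4/\delta))$ away from $\delta$---precisely the regime guaranteed by the choice of $R$ in the application.
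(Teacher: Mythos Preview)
Your primary plan—expand $u_X$ over the roots and bound each summand—is the paper's approach. Your reduction to the identity $u_X(z)=\sum_\zeta\log\frac{|z-\zeta|}{|1-\zeta|}$ is correct and in fact cleaner than the paper's, which keeps the expansion in the form~\eqref{equ:rootFormOfu} and bounds $c_X$, $N_X\log|z|$, and each $\log|1-\lambda z|$ (with $\lambda\in\{\zeta\}_{|\zeta|<1}\cup\{\zeta^{-1}\}_{|\zeta|\ge1}$) separately. Both arguments silently depend on a constraint on $R$ that the lemma does not state; the paper's own proof invokes $|z|,|z|^{-1}\le 2$, and in the only application (proof of Theorem~\ref{thm:logn}) one has $R=1+\delta/4$, which together with $\eps<\delta/4$ forces $|z-1|<\delta/2$ on $S_R^\ast(\eps)$.

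You are making the lower bound harder than it is. Once the margin $|z-1|\le\delta/2$ is secured, the reverse triangle inequality gives
\[
\frac{|z-\zeta|}{|1-\zeta|}\ \ge\ 1-\frac{|z-1|}{|1-\zeta|}\ \ge\ \frac12
\]
uniformly over \emph{all} roots, so $\bigl|\log(|z-\zeta|/|1-\zeta|)\bigr|\le\log 2$ and there is no near-boundary case to isolate. The thinness condition $\eps<\delta/4$ matters only as part of establishing that margin; it cannot rescue the argument on its own, since if $R$ were allowed close to $1+\delta$ then a root at $(1+\delta+\eta)e^{i\eta}$ (which lies just outside $B(1,\delta)$) sits arbitrarily close to $S_R^\ast(\eps)$ no matter how small $\eps$ is. Your Poisson-integral fallback is valid (modulo continuity of $u_X$ at any boundary zeros, handled by shrinking $\delta$ slightly) and actually yields the sharper estimate $|u_X(z)|=O(n\delta)$ for $|z-1|\le\delta/2$; it is a genuinely different and stronger route than the paper's, though more machinery than the lemma calls for.
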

\begin{proof} First, put $S = S_R(\eps)$ and $S^{\ast} = S_R^{\ast}(\eps)$. We use the expansion of $u = u_X$ in terms of the roots of $f_X$ (see \eqref{equ:rootFormOfu}, Section~\ref{sec:defs})
	\begin{equation} \label{eq:root-exp-of-u-inproof}
 u_X(z) = \sum_{|\zeta| < 1} \log\left|1 - z^{-1}\zeta \right| + \sum_{|\z| \geq 1 }\log\left|1 -z \zeta^{-1} \right| + c_X + N_X\log|z|,
	\end{equation}
	where the sums are over the roots of $f_X$ and $N_X = |\{ \z : |\z| < 1 \}|$.

 For $z\in S$, let $\l \in \{ \z^{-1} \}_{|\z|\geq 1} \cup \{ \z \}_{|\z| \leq 1}$;
 we note the inequality
\begin{equation} \label{eq:inequal} \left|\log\left|1-\l z\right|\right| \leq \max\{ \log 4/\delta , |\l z| \}  ,
\end{equation} which holds because $\log\left|1-\l z\right| \leq |\l z|$ and 
\[ -\log\left|1-\l z\right| \leq -\log(|z|\cdot|1 -\l| - |1 - z|)  \leq \log 4/\delta.\]
So, using that $u(1) = 0$, we have 
\begin{align*}
 |c_X| &\leq \sum_{|\zeta| < 1} \big|\log\left|1 - \zeta \right| \big| + \sum_{|\z| \geq 1 }\big|\log\left|1 - \zeta^{-1}\right|\big|  \\
 &\leq n (\log 4/\delta),
 \end{align*} since $\delta < 1$.  Since $|N_X| \leq n$, we have $|N_X \log|z| | \leq n \log|z|.$ 
 Now applying the triangle inequality and \eqref{eq:inequal} to \eqref{eq:root-exp-of-u-inproof} yields 
\[ | u(z)| \leq \sum_{|\zeta| < 1} \frac{|\zeta|}{|z|} + \sum_{|\z| \geq 1 } \frac{|z|}{|\zeta|} + n\log 4/\delta + |c_X| + |N_X \log|z| |\leq n\left(|z| + |z|^{-1}+3\log(4/\delta)\right).
\]
Putting these together with the fact that $|z|^{-1},|z| \leq 2$ (since $z\in S$) implies \[ |u(z)| \leq 7n \log(4/\delta), \] for $z \in S$.
\end{proof}

\vspace{5mm}

\noindent\emph{Proof of Theorem~\ref{thm:logn} : } Let $X \in \{0,\ldots,n\}$ be a random variable for which its probability generating function $f_X$ has no zeros in the ball $B = B(1,\delta)$, for $\delta \in (0,1/2)$.
Note that we may assume that $\s > 2^{11}\delta^{-1}\log n$ otherwise (\ref{equ:lognCloseToNorm}) is trivial, with implicit constant $2^{11}$. Since
$X \in \{0,\ldots,n\}$ we have $\s \leq n$, and therefore we may assume that $\delta > 1/n$, otherwise the theorem is trivial. 

Using the fact that there are no roots in $B$ implies that $u(z) = u_X(z)$ is harmonic in the ball $B$. We now work in a thin, truncated sector inside of $B$. In particular, we set $\eps = \delta/(64\log n)$, $R = 1+\delta/4$, $r = 1+\eps$ and work in the sector $S := S_R(\eps)$. Since $S_R(\eps) \subseteq B$, we have that $u$ is harmonic in a neighborhood of $S_R(\eps)$.

Now choose $b=1$ and, looking to apply Lemma~\ref{lem:b-decreasing}, we verify \eqref{equ:bcondition}. First note that for $n \geq 3$,
\[  \frac{1}{\eps}\log R/r  \geq (64\log n) \delta^{-1}\log(1+\delta/4)  - 1 \geq 7\log n. 
\] and therefore
\begin{equation} \label{eq:b-dec-check} \exp\left( - \frac{1}{\eps}\log R/r \right) \max_{z \in S_R^{\ast}(0,\eps) } |u(z)| \leq  \frac{1}{n^7}\left(7n\log 4/\delta\right) \leq \frac{7\log 4n }{n^6}, \end{equation} using Lemma~\ref{lem:size-u-in-ball} and $\delta > 1/n$. Thus \eqref{eq:b-dec-check} is at most $1\cdot (3/8)$, for $n \geq 3$, and thus we may apply Lemma~\ref{lem:b-decreasing} to conclude that $u$ is $1$-decreasing in $B(1,\eps)$.

We now look to apply Corollary~\ref{cor:CLT}. Since $u$ is $1$-decreasing, weakly-positive and harmonic in $B(1,\eps)$, we only need to check condition \eqref{eq:not-point-mass-inCLT} for $b=1$. This is easily done as 
\[ \sum_{j\geq 2} |a_j|(\eps/32)^j \geq \left(\frac{\eps \s}{32}\right)^2 \geq \left( \frac{\s \delta }{2^{11} \log n} \right)^2 > 1 = b, 
\] where we have used the assumption that $\s > 2^{11}\delta^{-1}\log n$, since the statement is trivial otherwise.  We now apply Corollary~\ref{cor:CLT} with 
$\eps = \delta/(64 \log n)$ to complete the proof.  \qed

\section{Multivariate central limit theorem for strong Rayleigh measures} \label{sec:CLT-for-stable}
In this section we prove that strong Rayleigh distributions satisfy central limit theorems. 
For a $d \times d$ positive semi-definite matrix $A$ and a vector $\mu \in \R^{d}$ we define $N(\mu,A)$ to be the multivariate Gaussian random variable with mean $\mu$ and covariance matrix $A$.  To prove that a random variable $X \in \R^d$ is a multivariate normal distribution, we show that ``many'' of its one dimensional projections 
$\langle X, v\rangle$ are Gaussian. We will then apply a variant of the famous Cram\'{e}r-Wold theorem which will allow us to conclude that 
$X$ itself must be a \emph{multivariate} Gaussian.

The key connection between stable polynomials and univariate polynomials that have no roots in a sector comes from the following fundamental observation, first made by Ghosh, Liggett and Pemantle \cite{GLP}.

\begin{lemma} \label{lem:roots-twisted-gen-function} Let $Y \in \Z^d$ be a finitely supported random variable with real-stable probability generating function $f_Y$. If $v = (v_1,\ldots,v_d) \in \Z_{\geq 0}^d$ then the probability generating function of $\langle v ,Y \rangle $ has no zeros in the sector $S(\pi/\|v\|_{\infty})$.
\end{lemma}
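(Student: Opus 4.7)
The plan is to reduce the zero-free sector condition for the univariate generating function of $\langle v, Y\rangle$ directly to the definition of real-stability applied to $f_Y$. First I would write out, for $z \in \C$ in the relevant sector, the probability generating function of $\langle v, Y\rangle$ as
\[ g(z) := \E\, z^{\langle v, Y\rangle} = \sum_{y} \P(Y=y)\, z^{\sum_i v_i y_i} = f_Y(z^{v_1}, \ldots, z^{v_d}), \]
so proving the lemma amounts to showing that the point $(z^{v_1}, \ldots, z^{v_d})$ lies outside the locus where $f_Y$ can vanish.

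Next I would split the sector $S(\pi/\|v\|_\infty) = \{z : |\arg(z)| \leq \pi/\|v\|_\infty\}$ into three cases. For $z$ with $\arg(z) \in (0, \pi/\|v\|_\infty)$, since each $v_i \geq 0$ is bounded by $\|v\|_\infty$, we have
\[ \arg(z^{v_i}) = v_i \arg(z) \in \bigl(0, v_i \pi/\|v\|_\infty \bigr] \subseteq (0, \pi), \]
so $z^{v_i} \in \bH$ coordinate-wise (with the convention that coordinates where $v_i=0$ give $z^{v_i}=1$, which lies on the real axis but does not obstruct membership in the upper half-plane for the nonzero coordinates; a separate elementary observation handles $v=0$ trivially, since then $\langle v,Y\rangle\equiv 0$). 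Real-stability of $f_Y$ then forces $g(z) = f_Y(z^{v_1},\ldots,z^{v_d}) \neq 0$. For $z > 0$ on the positive real axis the expression $g(z)$ is a sum of nonnegative terms summing to $1$ when $z=1$, and more generally a nonvanishing Laurent polynomial in $z>0$ with nonnegative coefficients, hence strictly positive. For $z$ with $\arg(z) \in (-\pi/\|v\|_\infty, 0)$ I would invoke the fact that $f_Y$ has real coefficients, giving $g(\bar z) = \overline{g(z)}$, and then apply the previous case to $\bar z$.

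The only subtle point is the boundary case where some $v_i = 0$, which keeps the corresponding coordinate $z^{v_i}=1$ on the real axis rather than in the open upper half-plane. Here I would note that the polynomial $f_Y(w_1,\ldots,w_d)$, when some $w_i$ is fixed to $1$, remains real-stable in the remaining variables (indeed, any substitution of a real value into a real-stable polynomial preserves real-stability of the reduction, by a standard continuity/Hurwitz argument, or by restricting the definition directly), so the conclusion $g(z) \neq 0$ still follows. Since the sector $S(\pi/\|v\|_\infty)$ in the paper's notation is closed, I would additionally check that for $\arg(z) = \pm\pi/\|v\|_\infty$ a coordinate $z^{v_i}$ with $v_i=\|v\|_\infty$ lies on the negative real axis, but this still avoids the open upper half-plane, and the Hurwitz-type limiting argument still gives $f_Y \neq 0$ there provided we interpret real-stability to include its standard closure property (which is the usual convention); if the statement is meant for the open sector only, this case is vacuous.

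I do not expect any real obstacle: the proof is essentially a one-line observation that raising to a nonnegative integer power multiplies arguments, so a sector of opening $2\pi/\|v\|_\infty$ is the largest one whose image under each $z \mapsto z^{v_i}$ stays within a half-turn, after which real-stability of $f_Y$ closes the argument. The writeup is short, at most a paragraph in the finished paper.
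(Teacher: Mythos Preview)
Your proposal is correct and follows essentially the same approach as the paper: both use the identity $g(z) = f_Y(z^{v_1},\ldots,z^{v_d})$, reduce to $\arg(z) \geq 0$ by conjugate symmetry, and then observe that real-stability of $f_Y$ forces the argument of any root outside the sector. You are in fact more careful than the paper about the edge case $v_i = 0$ (which the paper's proof glosses over), and your remarks on the closed-sector boundary are apt; these subtleties are harmless for the application to Theorem~\ref{thm:Polysector}.
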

\begin{proof}
Let $f_Y$ be the probability generating function of $Y$, let $f_v$ be the probability generating function of $Y(v) := \langle v ,Y \rangle $ and let $\z$ be a root of $f_v$. First note that 
\begin{equation} \label{eq:f_v-form} f_v(z) = f_Y(z^{v_1},\ldots,z^{v_k}).
\end{equation} Now write $\z = re^{i\t}$, for $r >0$, and $\t \in [-\pi,\pi]$. Since $f_v \in \R[z]$, we may additionally assume that $\t \in [0,\pi]$, by possibly replacing $re^{i\t}$ with its conjugate. From \eqref{eq:f_v-form}, we see that $(r^{v_1}e^{iv_1\t},\ldots, r^{v_d}e^{iv_d\t})$ is a root of $f_Y(z_1,\ldots,z_d)$ and therefore, by the real-stability of $f$, there is some $i \in [d]$ with $\sin(v_i\t) < 0$ and therefore $\t \in (\frac{\pi}{v_i},\pi] \subseteq (\frac{\pi}{\|v\|_{\infty}}, \pi]$.\end{proof}

\vspace{4mm}

Lemma~\ref{lem:roots-twisted-gen-function} allows us to use Theorem~\ref{thm:Polysector} to show that all the projections of a strong Rayleigh distribution are 
approximately normal, in non-negative integer directions. Note that we will take the degenerate normal $N(0,0)$ to also be a normal random variable: indeed, this measure is simply the point mass at $0$.

\begin{lemma} \label{lem:Proj-agree} For each $n \geq 1$, let $X_n \in \{0,\ldots,n\}^d$ be a strong Rayleigh distribution with mean $\mu_n$, covariance matrix $A_n$ and maximum variance $\s_n^2$.
Put $X^{\ast}_n = (X_n - \mu_n)\s^{-1}_n$. If $\s_n \rightarrow \infty$ and $\s_n^{-2}A_n \rightarrow A$, then for all $v \in \Z_{\geq 0}^d$, we have that 
$$ \langle X^{\ast}_n, v \rangle \to N(0, v^T A v )\,, $$
in distribution.
\end{lemma}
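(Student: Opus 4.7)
The plan is to reduce the convergence of projections to the univariate setting via Theorem~\ref{thm:Polysector}, using Lemma~\ref{lem:roots-twisted-gen-function} to establish the relevant sector condition. Set $Y_n := \langle v, X_n\rangle$. Since $X_n \in \{0,\ldots,n\}^d$ and $v \in \Z_{\geq 0}^d$, the random variable $Y_n$ takes values in $\{0,\ldots, n\|v\|_1\}$, and its probability generating function is a polynomial with non-negative coefficients. By Lemma~\ref{lem:roots-twisted-gen-function}, $f_{Y_n}$ has no zeros in the sector $S(\delta)$ where $\delta := \pi/\|v\|_\infty$ is a positive constant independent of $n$. Let $\mu'_n := \langle v, \mu_n\rangle$ and $\tau_n^2 := \Var(Y_n) = v^T A_n v$; by hypothesis $\tau_n^2/\s_n^2 \to v^T A v$.

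I would split the argument into two cases depending on whether $v^T A v > 0$. In the non-degenerate case, $\tau_n \to \infty$ since $\s_n \to \infty$ and $\tau_n/\s_n \to \sqrt{v^T A v} > 0$. Applying Theorem~\ref{thm:Polysector} to $Y_n$ (with degree bound $n\|v\|_1$ and sector parameter $\delta$) yields
\[ \sup_{t \in \R}\bigl| \PP(Y_n^\ast \leq t) - \PP(Z \leq t) \bigr| = O\!\left(\frac{1}{\delta \tau_n}\right) \longrightarrow 0, \]
where $Y_n^\ast := (Y_n - \mu'_n)/\tau_n$. Hence $Y_n^\ast \to N(0,1)$ in distribution. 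Now $\langle X_n^\ast, v\rangle = (\tau_n/\s_n)\, Y_n^\ast$, and since $\tau_n/\s_n \to \sqrt{v^T A v}$, Slutsky's theorem gives $\langle X_n^\ast, v\rangle \to \sqrt{v^T A v}\cdot Z$, which is exactly $N(0, v^T A v)$.

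In the degenerate case $v^T A v = 0$, we need to show $\langle X_n^\ast, v\rangle \to 0$ in distribution (which equals the point-mass distribution $N(0,0)$). Here $\Var(\langle X_n^\ast, v\rangle) = \tau_n^2/\s_n^2 \to 0$ and $\E\langle X_n^\ast, v\rangle = 0$, so Chebyshev's inequality gives $\langle X_n^\ast, v\rangle \to 0$ in probability, which implies the desired convergence in distribution.

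The main conceptual subtlety (though not technically an obstacle, given the tools in hand) is that Theorem~\ref{thm:Polysector} naturally produces convergence after normalizing $Y_n$ by its own standard deviation $\tau_n$, whereas the conclusion requires normalization by the global $\s_n$; the ratio $\tau_n/\s_n$ must be tracked, and the degenerate regime (where $\tau_n$ may fail to diverge) handled separately by a variance argument rather than by Theorem~\ref{thm:Polysector}. The remainder is bookkeeping: verifying that the degree-$n\|v\|_1$ polynomial $f_{Y_n}$ has a sector of fixed angular width $\pi/\|v\|_\infty$ free of zeros, so that the implicit constant in Theorem~\ref{thm:Polysector} depends only on $v$ and not on $n$.
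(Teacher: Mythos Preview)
Your proposal is correct and follows essentially the same route as the paper: reduce to the univariate projection $Y_n = \langle v, X_n\rangle$, invoke Lemma~\ref{lem:roots-twisted-gen-function} for the sector condition, then apply the sector CLT in the non-degenerate case (the paper cites Corollary~\ref{cor:sector-CLT} rather than Theorem~\ref{thm:Polysector}, but this is immaterial) and Chebyshev in the degenerate case, with the same $\tau_n/\s_n$ rescaling via what you call Slutsky. Your case split on $v^TAv = 0$ is equivalent to the paper's split on $Av = 0$ since $A$ is positive semi-definite.
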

\begin{proof}

Let us put $Y_n(v) := \langle X_n, v \rangle = v_1X_1 + \cdots + v_dX_d $. Note that $\EE\, Y_n(v) = \langle v,\mu_n \rangle$ and that  
\begin{equation} \label{eq:var_form}
 \Var(Y_n(v)) = v^TA_n v. 
\end{equation} From Lemma~\ref{lem:roots-twisted-gen-function}, we see that the probability generating function $f_{Y_n(v)}$ of $Y_n(v)$ has no roots 
in the sector $S(\delta)$ where $\delta = \pi/\|v\|_{\infty}$. 

There are two cases: when $v$ is in the null-space of $A$ and when $v$ is not in the null space of $A$. Let us first assume that $Av \not= 0$.
In this case we have 
\[ \lim_n \Var(Y_n(v))\s_n^{-2} = \lim_n v^T\left(\s_n^{-2}A_n\right)v = v^T A v \neq 0,
\] and in particular $\Var(Y_n(v)) \rightarrow \infty$. Thus we may apply our central limit theorem for random variables avoiding a sector, Corollary~\ref{cor:sector-CLT},
to see that 
\[ \frac{ Y_n(v) - \EE Y_n(v) }{(v^TA_nv)^{1/2}} \rightarrow N(0,1) 
\] and therefore
\[ \frac{\s_n}{(v^TA_nv)^{1/2}} \langle X^{\ast}_n, v \rangle  =  \frac{ Y_n(v) - \EE Y_n(v) }{(v^TA_nv)^{1/2}} \rightarrow N(0,1).
\] Since $(v^TA_nv)^{1/2}\s_n^{-1}$ tends to a constant as $n \rightarrow \infty$, it follows that 
\[ \langle X^{\ast}_n, v \rangle  \rightarrow N(0,  v^T A v), 
\] as desired.

In the other case, we have that  $Av = 0 $ and therefore $\Var(Y_n(v))\s_n^{-2} \rightarrow 0$. So, for all $x >0$, we may apply Chebyshev's inequality to see that
\[ \PP\left( |Y_n(v) - \EE Y_n(v)|\geq x \s_n \right) \leq \Var(Y_n(v))(x\s_n)^{-2} = o(1).  
\] This simply means that $(Y_n(v) - \EE Y_n(v))\s_n^{-1}$ tends to a point mass at zero, in distribution. 
So trivially,
\[  \langle X^{\ast}_n, v \rangle \rightarrow N(0,0) = N(0,v^TAv),
\] as desired.

\end{proof}

To ``lift'' this information about the projected random variables, we appeal to a theorem of Cuesta-Albertos, Fraiman and Ransford \cite{Cuesta-Albertos}, which will allow us to conclude that the distribution of our multivariate random variable is approximately normal from the fact that ``many'' of its projections are normal.
To properly state this result, we let $\nu$ be a Borel probability measure on $\R^d$ and, for $v \in \R^d$, we define the measure $\nu_v$ to be the ``projected'' measure on $\R$ defined by
\[ \nu_v(B) = \nu\left(\left\lbrace x \in \R^d : \langle v , x\rangle \in B \right\rbrace \right),
\] for every Borel set $B \subseteq \R$. If $\tilde{\nu}$ is another Borel probability measure on $\R^d$, we define $\Pi(\nu,\tilde{\nu}) \subseteq \R^{d}$ to be the set of $v \in \R^{d}$ for which $\tilde{\nu}_v = \nu_v$. In this notation, the classical Cram\'{e}r-Wold Theorem \cite{Cramer-Wold} says that if $\nu,\tilde{\nu}$ are Borel probability
measures such that $\Pi(\nu,\tilde{\nu}) = \R^d$ then $\nu = \tilde{\nu}$. Cuesta-Albertos, Fraiman and Ransford have sharpened this result by showing
that it is enough for $\Pi(\nu,\tilde{\nu})$ to not be contained in the zero-set of a polynomial. We shall only make use of the following corollary of this theorem (stated here as Theorem~\ref{thm:CAFR}), which we derive in Appendix \ref{app:proofs}.

\begin{corollary} \label{cor:CAFR}
For $d \geq 1$, let $A$ be $d \times d$ positive semi-definite matrix and let $\nu_A$ be the Gaussian distribution on $\R^d$ with covariance matrix $A$ and mean zero. If $\nu$ is a measure
for which $\Pi(\nu_A,\nu) \supseteq \Z_{\geq 0}^d$ then $\nu_A = \nu$.
\end{corollary}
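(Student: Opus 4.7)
My plan is to derive Corollary~\ref{cor:CAFR} as an essentially immediate consequence of the Cuesta-Albertos--Fraiman--Ransford theorem (Theorem~\ref{thm:CAFR}), whose statement from the appendix says, roughly, that if a reference probability measure $\mu$ on $\R^d$ is determined by its moments and the set of agreeing projection directions $\Pi(\mu,\nu)$ is not contained in the zero set of any non-zero polynomial, then $\mu=\nu$. Since $\nu_A$ is Gaussian, all its moments exist and it is classically determined by them, so the moment hypothesis is automatic.

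The only remaining point is to verify that $\Z_{\geq 0}^d$ is \emph{Zariski dense} in $\R^d$, i.e., that no non-zero polynomial $p\in\R[x_1,\ldots,x_d]$ vanishes identically on $\Z_{\geq 0}^d$. I would prove this by induction on $d$. The base case $d=1$ is the standard fact that a non-zero univariate polynomial has finitely many roots. For the inductive step, write $p(x_1,\ldots,x_d)=\sum_{i=0}^k p_i(x_1,\ldots,x_{d-1})\,x_d^i$; fixing any $(a_1,\ldots,a_{d-1})\in\Z_{\geq 0}^{d-1}$, the univariate polynomial $p(a_1,\ldots,a_{d-1},x_d)$ vanishes on $\Z_{\geq 0}$, hence is identically zero, so $p_i(a_1,\ldots,a_{d-1})=0$ for every $i$. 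Applying the inductive hypothesis to each $p_i$ gives $p_i\equiv 0$, and hence $p\equiv 0$.

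Combining these two observations finishes the argument: the hypothesis $\Pi(\nu_A,\nu)\supseteq\Z_{\geq 0}^d$ together with Zariski-density forces $\Pi(\nu_A,\nu)$ not to be contained in the zero locus of any non-zero polynomial, so Theorem~\ref{thm:CAFR} applies and yields $\nu_A=\nu$.

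There is no real obstacle here---the entire corollary is a packaging of CAFR for our specific use. The only mildly delicate point is the Zariski-density lemma, which is completely standard; I include it explicitly only because the rest of the paper relies on it in the proof of Theorem~\ref{thm:stable_clt}, where Lemma~\ref{lem:Proj-agree} supplies agreement of projections along exactly the directions $v\in\Z_{\geq 0}^d$.
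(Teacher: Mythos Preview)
Your approach is the same as the paper's: apply Theorem~\ref{thm:CAFR} after checking its two hypotheses, and your inductive proof that $\Z_{\geq 0}^d$ is Zariski-dense is correct (and more detailed than the paper's one-line assertion of the same fact).

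One point needs tightening. The moment hypothesis in Theorem~\ref{thm:CAFR}, as stated in the paper, is the \emph{Carleman condition} $\sum_{k\geq 1} M_k^{-1/k}=\infty$ on the absolute moments $M_k=\int_{\R^d}\|x\|_2^k\,d\nu_A$, not merely that $\nu_A$ is determined by its moments. The implication runs from Carleman to moment-determinacy, not the other way, so invoking ``classically determined by moments'' does not literally discharge the hypothesis. The paper fills this gap by noting that the Gaussian has $\int_{\R^d} e^{t\|x\|_2}\,d\nu_A<\infty$ for every $t$, which forces $\limsup_k (M_k/k!)^{1/k}<\infty$ and hence $M_k^{-1/k}\gtrsim 1/k$ for large $k$, giving Carleman. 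This is a one-line fix, but it is the step you are missing.
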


Recall that a sequence of Borel probability measures $\nu_n$ on $\R^d$ is said to be \emph{tight}, if for every $\eps >0$ there exists $R = R(\eps) >0$, so that the ball $B(0,R)$ satisfies $\nu_n(B(0,R)) > 1-\eps$, for all sufficiently large $n$. 
We need the following basic fact, proved in Appendix \ref{app:proofs}.

\begin{fact} \label{fact:tight} For each $n \geq 1$, let $X_n \in \R^d$ be a random variable, with finite mean $\mu_n$, covariance matrix $A_n$ and maximum variance $\s_n^2 < \infty$.
If $\nu_n$ is the law of $X_n^{\ast} := (X_n - \mu_n)\s_n^{-1}$ then $\nu_n$ is a tight sequence of measures.
\end{fact}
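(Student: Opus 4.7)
The plan is to establish tightness directly from a uniform second moment bound on $X_n^{\ast}$, which follows because dividing by the largest eigenvalue of the covariance matrix makes the trace of the normalized covariance bounded by $d$. There is no subtlety beyond making this trace/operator-norm comparison explicit and then invoking Markov's inequality.

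First I would observe that, since the covariance matrix $A_n$ is symmetric positive semi-definite, its operator norm $\s_n^2$ equals its largest eigenvalue. Consequently, if $\lambda_1,\ldots,\lambda_d$ are the eigenvalues of $A_n$, then
\[ \mathrm{tr}(A_n) = \sum_{i=1}^d \lambda_i \leq d \cdot \s_n^2. \]
On the other hand, $\mathrm{tr}(A_n)$ is exactly $\sum_{i=1}^d \Var(X_{n,i})$, so this gives the uniform bound
\[ \EE\,\|X_n - \mu_n\|_2^{\,2} = \mathrm{tr}(A_n) \leq d\,\s_n^2, \]
and hence $\EE\,\|X_n^{\ast}\|_2^{\,2} \leq d$ for every $n$.

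Next I would apply Markov's inequality to the non-negative random variable $\|X_n^{\ast}\|_2^{\,2}$: for any $R > 0$,
\[ \nu_n\!\left(\R^d \setminus B(0,R)\right) = \PP\!\left(\|X_n^{\ast}\|_2 > R\right) \leq \frac{\EE\,\|X_n^{\ast}\|_2^{\,2}}{R^2} \leq \frac{d}{R^2}. \]
Given $\eps > 0$, choosing $R := \sqrt{d/\eps}$ yields $\nu_n(B(0,R)) \geq 1 - \eps$ for every $n$, which is the definition of tightness.

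There is no real obstacle here; the only thing to be careful about is the identification of $\s_n^2$ (defined as the $\ell^2$-operator norm of $A_n$) with the largest eigenvalue of $A_n$, which uses the positive semi-definiteness of $A_n$. Once that is in place, the argument is a one-line Chebyshev estimate, uniform in $n$.
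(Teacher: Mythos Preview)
Your proof is correct and is essentially identical to the paper's own argument: the paper also bounds $\PP(\|X_n^{\ast}\|_2 \geq x)$ by $\mathrm{Tr}(A_n)/(x^2\s_n^2) \leq d/x^2$ via Chebyshev, using that $\s_n^2$ is the operator norm of $A_n$. You have simply made the trace/operator-norm comparison a bit more explicit.
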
 

The key fact we use about tight sequences of measures is that they have subsequential limits; the following easy fact can be proved by a standard 
diagonalization argument.

\begin{fact} \label{fact:compactness}
For $d \in \N$ and each $n\geq 1$, let $\nu_n$ be a Borel probability measure on $\R^d$. If $\{\nu_n\}_n$ is a tight sequence then there exists a
Borel probability measure $\nu$ and a subset $S \subseteq \N$ so that $\nu_{n} \rightarrow \nu$, in distribution, as $n \in S$ and $n\rightarrow \infty$.
\end{fact}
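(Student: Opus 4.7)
The plan is to prove this version of Prokhorov's theorem on $\R^d$ by combining a diagonal extraction with the Riesz--Markov--Kakutani representation theorem, and to invoke tightness at exactly one point.

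First I would fix a countable subset $\{f_k\}_{k\geq 1} \subseteq C_c(\R^d)$ that is dense in $C_c(\R^d)$ with respect to the supremum norm; such a set exists because $\R^d$ is second countable. For each fixed $k$ the scalar sequence $\int f_k\, d\nu_n$ is bounded in absolute value by $\|f_k\|_\infty$, and so admits a convergent subsequence by Bolzano--Weierstrass. A standard diagonal extraction then produces a single $S \subseteq \N$ along which $\int f_k\, d\nu_n$ converges simultaneously for every $k$ as $n \to \infty$ with $n \in S$; call the common limit $\Lambda(f_k)$.

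Next I would extend $\Lambda$ to a linear functional on all of $C_c(\R^d)$ by uniform continuity, using density of $\{f_k\}$ to ensure the extension is well-defined. Since each $\nu_n$ is a probability measure, $\Lambda$ is positive with $|\Lambda(f)| \leq \|f\|_\infty$, so by the Riesz--Markov--Kakutani representation theorem there is a unique positive Borel measure $\nu$ on $\R^d$ with $\Lambda(f) = \int f\, d\nu$. The crucial step is now to verify that $\nu$ is a probability measure, and this is where tightness enters: for any $\eps > 0$, tightness furnishes $R > 0$ with $\nu_n(B(0,R)) > 1-\eps$ for all sufficiently large $n$; choosing $g \in C_c(\R^d)$ with $0 \leq g \leq 1$, $g \equiv 1$ on $B(0,R)$ and $g \equiv 0$ off $B(0,R+1)$ gives $\int g\, d\nu_n \geq 1-\eps$ for large $n \in S$, whence $\int g\, d\nu \geq 1-\eps$, so that $\nu(\R^d) \geq 1-\eps$. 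Letting $\eps \to 0$ yields $\nu(\R^d) = 1$.

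Finally, to upgrade convergence $\int f\, d\nu_n \to \int f\, d\nu$ from $f \in C_c(\R^d)$ to arbitrary bounded continuous $f$, one picks a ball $B(0,R)$ on which both the $\nu_n$ (by tightness of the original sequence) and $\nu$ (by the probability-measure property just established) place mass at least $1-\eps$, multiplies $f$ by a $C_c$ cut-off equal to $1$ on $B(0,R)$, and bounds the error by $2\|f\|_\infty \eps$ via the triangle inequality. The main obstacle in the argument is precisely the tightness step: the Riesz construction always produces a positive Borel measure of total mass at most one, but without tightness the limit can be a strict sub-probability measure, because the mass of $\nu_n$ may escape to infinity along the chosen subsequence; tightness is exactly the hypothesis that prevents this leakage.
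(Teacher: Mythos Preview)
Your argument is correct: this is a clean proof of the relevant direction of Prokhorov's theorem via diagonal extraction on a countable dense subset of $C_c(\R^d)$, Riesz--Markov--Kakutani, and tightness to rule out mass escaping to infinity. The paper does not actually supply a proof of this fact; it merely remarks that it ``can be proved by a standard diagonalization argument,'' which is exactly what you have carried out in detail.

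One small point of presentation: the separability of $C_c(\R^d)$ in the sup norm is perhaps better justified by noting that $C_0(\R^d)$ is separable (e.g.\ via Stone--Weierstrass on one-point compactifications, or by exhibiting an explicit countable dense set), rather than by citing second countability of $\R^d$ alone, since second countability of the underlying space does not immediately give separability of the function space. But this is a matter of phrasing, not a gap.
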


From here we can easily put the pieces together to finish the proof of Theorem~\ref{thm:stable_clt}.

\vspace{4mm}

\noindent \emph{Proof of Theorem~\ref{thm:stable_clt}. } For each $n \geq 1$, let $X_n \in \{0,\ldots,n\}^d$ be a random variable with mean $\mu_n$, covariance matrix $A_n$,
maximum variance $\s_n^{2}$ and let $\nu_n$ be the law of $X_n^{\ast}:= (X_n-\mu_n)\s_n^{-1}$. We have that $\s_n^{-2}A_n\rightarrow A$, for some (non-zero) matrix $A$. 

Let $\nu_A$ denote the law of $N(0,A)$; we show that every subsequence has a further subsequence that converges to $\nu_A$, which is enough to conclude that $\nu_n \rightarrow \nu_A$. For this, let $S \subset \N$; by Facts \ref{fact:tight} and \ref{fact:compactness}, we may find $S' \subset S$ so that along $S'$ we have $\nu_n \to \nu'$ for some measure $\nu'$.  Convergence in distribution together with Lemma \ref{lem:Proj-agree} imply that $\Pi(\nu_A,\nu') \supseteq \Z_{\geq 0}^d$.  Corollary \ref{cor:CAFR} then implies $\nu' = \nu_A$. This completes the proof. \qed

\begin{remark}
We point out that our results here easily generalize beyond real stable polynomials to other situations 
where $f_X$ satisfies a certain ``half-plane property''. To take a well-known example, we say that a polynomial is \emph{Hurwitz stable} if it has no roots 
in
\[ \{ (z_1,\ldots,z_d) \in \C^d : \Re(z_i) > 0, \textit{ for all } i \}. \] 
Our work here implies a version of Theorem~\ref{thm:stable_clt} in the case that $f_X$ is Hurwitz stable. In fact, the only point to check is Lemma~\ref{lem:roots-twisted-gen-function} and the rest of the proof proceeds in the same way. 

More generally, let $\phi \in (0,2\pi)$ and define 
\[ H_{\phi} := \{ (z_1,\ldots,z_d) \in \C^d : \arg(z_i) \in  [0,\phi], \textit{ for all } i \}.
\] We say that a polynomial $f$ is 
$H_{\phi}$-\emph{stable} if it has no roots in $H_{\phi}$. It is not hard to see that our results imply a central limit theorem for a sequence of 
random variables $X_n$ when the $f_{X_n}$ are $H_{\phi}$-stable polynomials and $\s_n \rightarrow \infty$.
\end{remark}

\section{Sharpness of results} \label{sec:sharpness}

In this section we show that our quantitative results, Theorems~\ref{thm:logn} and \ref{thm:Polysector}, are sharp up to the implied constants. From this it will also follow that the conditions in the limit theorems, Corollaries~\ref{cor:ball-CLT} and \ref{cor:sector-CLT}, are best-possible.

Our constructions follow from a few simple observations. This first observation gives a cheap bound on the discrepancy between a discrete random variable and the standard normal distribution. Recall that we use the notation $X^{\ast}$ to denote $(X -\mu)\s^{-1}$, for a random variable $X$.

\begin{observation}\label{obs:discrete-discrep} Let $X \in \Z$ be a random variable with mean $\mu <\infty $ and standard deviation\footnote{The $2^{-3}$ is an arbitrary choice, we just needed a sufficiently small number for the application of Theorem~\ref{thm:Sector-is-sharp}.}
 $ \s \in [2^{-3}, \infty)$. Then 
\[ \sup_{t \in \R} |\PP(X^{\ast} \leq t) - \PP(Z \leq t)| \geq e^{-16}/\sigma, \]
where $Z \sim N(0,1)$.
\end{observation}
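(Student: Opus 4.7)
The plan is to exploit the fact that $X^{\ast}$ lives on a lattice of spacing $1/\sigma$, while $\Phi$ is a continuous CDF. Any jump in $F_{X^{\ast}}$ at a point $t_0$ cannot be matched by $\Phi$, and this forces a lower bound on the Kolmogorov distance. So I will first reduce to showing that $X$ must possess some atom of mass at least $2e^{-16}/\sigma$, and then produce such an atom using a pigeonhole argument based on Chebyshev.

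For the reduction: if $F_{X^{\ast}}$ has a jump of size $p$ at a point $t_0$, then writing $F_{X^{\ast}}(t_0^-)$ for the left limit we have $F_{X^{\ast}}(t_0) - F_{X^{\ast}}(t_0^-) = p$ while $\Phi(t_0) - \Phi(t_0^-) = 0$. Writing $D := \sup_{t \in \R}|F_{X^{\ast}}(t) - \Phi(t)|$ and using that $\Phi$ is continuous to pass the limit $t \uparrow t_0$ through the absolute value, one gets $|F_{X^{\ast}}(t_0^-) - \Phi(t_0^-)| \leq D$ and $|F_{X^{\ast}}(t_0) - \Phi(t_0)| \leq D$, so that by the triangle inequality $p \leq 2D$. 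Hence it suffices to produce some integer $k_0$ with $\PP(X = k_0) \geq 2e^{-16}/\sigma$, since such an atom corresponds to a jump of that size in $F_{X^{\ast}}$ at the point $(k_0 - \mu)/\sigma$.

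To produce the atom, apply Chebyshev's inequality: $\PP(|X-\mu| < 2\sigma) \geq 3/4$, so at least $3/4$ of the mass of $X$ is concentrated on the integers in the open interval $(\mu - 2\sigma, \mu + 2\sigma)$, of which there are at most $4\sigma + 1$. Since $\sigma \geq 2^{-3}$ gives $1 \leq 8\sigma$, this count is at most $12\sigma$, so by pigeonhole some integer $k_0$ in this interval satisfies $\PP(X = k_0) \geq (3/4)/(12\sigma) = 1/(16\sigma)$. Combining with the previous paragraph yields $D \geq 1/(32\sigma) \geq e^{-16}/\sigma$, since $e^{-16}$ is vastly smaller than $1/32$.

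There is no real obstacle here; the argument is a one-shot pigeonhole/Chebyshev bound combined with the trivial observation that a continuous CDF cannot track a discrete jump. The only mild bookkeeping concerns the right-continuity convention for CDFs and the transition from atoms of $X$ to jumps of $X^{\ast}$, both of which are straightforward. The very loose constant $e^{-16}$ makes clear that the intended use in Theorem~\ref{thm:Sector-is-sharp} only needs the \emph{qualitative} shape $c/\sigma$ of the bound, not an optimal constant.
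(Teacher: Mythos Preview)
Your proof is correct, but it takes a genuinely different route from the paper's. The paper does not look for a large atom at all; instead it exploits the \emph{gaps} in the support of $X^{\ast}$. Since $X^{\ast}$ takes values in $\sigma^{-1}(\Z-\mu)$, there is an open interval $(a/\sigma,b/\sigma)$ of length $1/\sigma$ containing $0$ on which $X^{\ast}$ places no mass, while $Z$ places mass $\gtrsim 1/\sigma$ there (the hypothesis $\sigma\geq 2^{-3}$ is used only to keep the endpoint $1/(2\sigma)$ bounded so that the Gaussian density is bounded below). The triangle inequality applied to the CDFs at the two endpoints then yields the bound directly.

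Your argument instead produces a large atom via Chebyshev and pigeonhole. This is slightly more work---you need to count integers in a window and invoke a variance inequality---whereas the paper's gap argument is essentially a one-line observation about lattice spacing near the origin and needs no probabilistic input beyond the definition of $X^{\ast}$. On the other hand, your approach is more robust: it would still give a $c/\sigma$ bound for any distribution supported on a set with bounded packing density, not just $\Z$, and it actually yields a noticeably better constant ($1/32$ versus the paper's implicit constant, both far above $e^{-16}$). Either way, the very generous constant $e^{-16}$ signals that only the shape $c/\sigma$ matters for the application.
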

\begin{proof}
Note that since $X \in \Z$, we have $X^{\ast} := (X - \mu) \s^{-1} \in \frac{1}{\s}(\Z - \mu)$.  Find values $a, b \in \Z - \mu$ so that $b - a = 1$, $a \leq 0$ and $b \geq 0$.  Then $\PP(X^\ast \in \frac{1}{\s}(a,b)) = 0$ while 
\[ \PP\left(Z \in \frac{1}{\s}(a,b) \right) = \frac{1}{(2\pi)^{1/2}}\int_{a/\s}^{b/\s} e^{-s^2/2} ds \geq  \frac{1}{(2\pi)^{1/2}}\int_{0}^{1/2\s} e^{-s^2/2} ds  \geq \frac{e^{-16}}{\s}, 
\] where $Z \sim N(0,1)$ and we have used that one of $|a|$ or $|b|$ must be at least $1/2$. This allows us to obtain a lower bound on the maximum discrepancy between the two cumulative distribution functions. We have
\[ 2 \cdot \sup_{t \in \R}|\PP(X^{\ast} \leq t ) - \PP(Z \leq t)|\geq |\PP(X^{\ast} \leq b/\s) - \PP(Z\leq b/\s)| + | \PP( X^{\ast} \leq a/\s ) -  \PP( Z \leq a/\s )|,
\] which is at least 
\[   \PP\left( Z \in \frac{1}{\s}(a,b)\right) - \PP\left( X^{\ast} \in \frac{1}{\s}(a,b)\right) \geq e^{-16}/\s,
\] by the triangle inequality.
\end{proof}

\vspace{4mm}

The next basic observation records a key ``trick'' in our constructions. It says that the transformation $X \mapsto k \cdot X$ does not change the maximum discrepancy with a normal. However, the standard deviation \emph{increases} as $\s(k \cdot X) = k\s(X)$.

\begin{observation} \label{obs:mult-by-k}
Let $Y \in \Z$ be a random variable with finite mean $\mu $ and standard deviation $\s \in [2^{-3}, \infty)$. For $k > 0$, let $X = k \cdot Y$.
Then
\[ \sup_{t \in R} |\PP(X^{\ast} \leq t) - \PP(Z \leq t)| \geq \frac{ck}{\sigma(X)}, 
\] where we can take $c = e^{-16}$.\end{observation}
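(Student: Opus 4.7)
The plan is essentially to reduce to Observation~\ref{obs:discrete-discrep} via the scale-invariance of centering-and-normalizing. The key point is that the random variable $X^{\ast}$ agrees in distribution with $Y^{\ast}$: indeed, $\mu(X)=k\mu$ and $\sigma(X)=k\sigma$, so
\[ X^{\ast} = \frac{X - \mu(X)}{\sigma(X)} = \frac{kY - k\mu}{k\sigma} = \frac{Y - \mu}{\sigma} = Y^{\ast}. \]
In particular the cumulative distribution functions of $X^{\ast}$ and $Y^{\ast}$ coincide.

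Next I would apply Observation~\ref{obs:discrete-discrep} to $Y$ directly. The hypotheses of that observation are met since $Y \in \Z$ has finite mean and $\sigma(Y) = \sigma \in [2^{-3}, \infty)$. This yields
\[ \sup_{t \in \R} |\PP(Y^{\ast} \leq t) - \PP(Z \leq t)| \geq \frac{e^{-16}}{\sigma}. \]

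Finally, substituting $Y^{\ast} = X^{\ast}$ on the left-hand side and rewriting $\sigma = \sigma(X)/k$ on the right-hand side gives
\[ \sup_{t \in \R} |\PP(X^{\ast} \leq t) - \PP(Z \leq t)| \geq \frac{e^{-16}}{\sigma} = \frac{e^{-16}\,k}{\sigma(X)}, \]
as desired, with $c = e^{-16}$. There is no real obstacle here: the only content is the observation that the transformation $Y \mapsto kY$ leaves the standardized variable invariant, so the maximum discrepancy with the normal is unchanged while the raw standard deviation scales by $k$; this is precisely what turns the $1/\sigma$ bound for $Y$ into a $k/\sigma(X)$ bound for $X$ and explains why inflating the variance via such dilations is the ``cheap trick'' for producing sharp examples.
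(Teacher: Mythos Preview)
Your proof is correct and is essentially identical to the paper's own argument: both note that $X^{\ast}=Y^{\ast}$ since $\mu(X)=k\mu$ and $\sigma(X)=k\sigma$, then apply Observation~\ref{obs:discrete-discrep} to $Y$ and rewrite $\sigma(Y)=\sigma(X)/k$ to obtain the bound.
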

\begin{proof}
Note that $\s(X) = k\s(Y)$ and $\EE X = k \EE Y$ thus  
\[ X^{\ast} = (X - \EE\, X)\s(X)^{-1} = (Y - \EE Y)\s(Y)^{-1} = Y^{\ast}, \] and so 
\[ \sup_{t \in \R}|\PP(X^{\ast} \leq t ) - \PP(Z \leq t)| = \sup_{t \in \R}|\PP(Y^{\ast} \leq t ) - \PP(Z \leq t)|. 
\] Thus, applying Observation~\ref{obs:discrete-discrep} to $Y$ yields
\[\sup_{t \in \R}|\PP(X^{\ast} \leq t ) - \PP(Z \leq t)|  \geq \frac{e^{-16}}{\s(Y)} = \frac{ck}{\s(X)}, \] as desired.
\end{proof}

\vspace{4mm}

Our constructions for both theorems in this section are achieved simply by applying Observation~\ref{obs:mult-by-k} to an appropriate ``seed'' random variable. For Theorem~\ref{thm:Sector-is-sharp}, we make use of a simple class of random variables. If $\t \in [\pi/2,\pi]$, the polynomial 
\[P_{\rho,\t}(z) = (z-\rho e^{i\t})(z-\rho e^{-i\t}) = x^2 -2\rho(\cos \t) x + \rho^2
\] has non-negative coefficients and therefore $P_{\rho,\t}(z)(P_{\rho,\t}(1))^{-1}$ is the probability generating function of a random variable, which we shall denote by $Y_{\rho,\t}$.

Now note that for each fixed $\t$, as $\rho \geq 1$ increases, $\Var(Y_{\rho,\t})$ decreases as a continuous function of $\rho$.  Further, each random variable is non-degenerate for $\rho \in[1,\infty)$, implying $\Var(Y_{\rho,\t}) > 0$.  Since we also have $\lim_{\rho \to \infty} \Var(Y_{\rho,\t}) = 0$, there exists some $a(\t)>0$ so that $\{ \Var(Y_{\rho,\t}) \}_{\rho \geq 1} \supseteq [0,a(\t)]$.

\begin{theorem} \label{thm:Sector-is-sharp}
For every $\delta \in (0,\pi]$ and $\s > 0$ with $\delta \sigma \geq 1$, there exists a random variable $X \in \Z_{\geq 0}$, which is supported on finitely many integers, with standard deviation $\s$ and probability generating function $f_X$ for which $\delta = \min_{\z: f(\z) = 0}|\arg(\z)|$ and
\[\sup_{t \in \R} | \PP( X^{\ast} \leq t ) - \PP(Z \leq t)| \geq \frac{c}{\delta \s},\]
where we can take $c = e^{-16}$. 
\end{theorem}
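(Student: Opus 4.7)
The plan is to construct $X$ by scaling and convolving the seed distributions $Y_{\rho,\theta}$ introduced above. First, choose $n \in \N$ so that $\theta := n\delta$ lies in $[\pi/2,\pi]$: take $n := \lceil \pi/(2\delta)\rceil$ when $\delta \leq \pi/2$ (which gives $n\delta \in [\pi/2, \pi/2 + \delta] \subseteq [\pi/2,\pi]$) and $n := 1$ when $\delta > \pi/2$. Let $W := Y^{(1)}_{\rho,\theta} + \cdots + Y^{(m)}_{\rho,\theta}$ be a sum of $m$ i.i.d.\ copies of $Y_{\rho,\theta}$, and set $X := nW$. The free parameters are $m \in \Z_{\geq 1}$ and $\rho \geq 1$.

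Next I would pin down these parameters so that $\sigma(X) = \sigma$ exactly. A direct computation with the PGF $P_{\rho,\theta}(z)/P_{\rho,\theta}(1)$ at $\rho=1$ gives $\sigma(Y_{1,\theta})^2 = 1/(1-\cos\theta) \geq 1/2$, and as noted just before the theorem, $\sigma(Y_{\rho,\theta})$ decreases continuously to $0$ as $\rho \to \infty$, so every value in $(0,\sigma(Y_{1,\theta})]$ is attained. Since $\sigma(X) = n\sqrt{m}\,\sigma(Y_{\rho,\theta})$, I pick $m$ to be the least positive integer with $m \geq \sigma^2(1-\cos\theta)/n^2$, and then apply the intermediate value theorem to select $\rho \geq 1$ with $\sigma(Y_{\rho,\theta}) = \sigma/(n\sqrt{m})$, giving $\sigma(X) = \sigma$.

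For the root structure, since $f_W = (P_{\rho,\theta}/P_{\rho,\theta}(1))^m$ and $X = nW$, the PGF of $X$ is $f_X(z) = \bigl(P_{\rho,\theta}(z^n)/P_{\rho,\theta}(1)\bigr)^m$, whose zeros are the $n$-th roots of $\rho e^{\pm i\theta}$, namely $\rho^{1/n} e^{i(\pm\delta + 2\pi j/n)}$ for $j = 0,\ldots,n-1$. Among these the minimum of $|\arg|$ is attained at $j=0$ and equals $\delta$: for $j \neq 0$ the argument has magnitude at least $2\pi/n - \delta \geq \delta$, using $n\delta \leq \pi$. This verifies $\min_{\zeta:f_X(\zeta)=0}|\arg(\zeta)| = \delta$, and also confirms $X \in \Z_{\geq 0}$ and finitely supported.

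Finally, I would lower-bound the discrepancy via Observation~\ref{obs:mult-by-k} applied to seed $Y := W$ and scaling factor $k := n$. Note $W$ is integer-valued with $\sigma(W) = \sigma/n$; the constraint $\delta\sigma \geq 1$ combined with $n \leq \pi/\delta$ gives $\sigma(W) \geq \delta\sigma/\pi \geq 1/\pi > 1/8$, so the hypothesis of the observation is met, yielding
\[ \sup_{t \in \R} |\PP(X^{\ast} \leq t) - \PP(Z \leq t)| \geq \frac{e^{-16}\,n}{\sigma(X)} = \frac{e^{-16}\,n}{\sigma} \geq \frac{e^{-16}}{\delta\sigma}, \]
using $n \geq 1/\delta$ in both regimes (since $n \geq \pi/(2\delta) > 1/\delta$ when $\delta \leq \pi/2$, and $n = 1 \geq 1/\delta$ when $\delta > \pi/2$). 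The main obstacle is juggling the two-parameter family $(m,\rho)$ so that $\sigma(X)$ hits the prescribed value while the root set still has minimum argument exactly $\delta$; the key point is that the dilation $W \mapsto nW$ simultaneously shrinks all root arguments by a factor of $n$ and multiplies $\sigma$ by $n$, so placing the seed roots at $\rho e^{\pm i n\delta}$ on the arc $[\pi/2,\pi]$ is precisely the choice that yields both non-negative coefficients for the seed and the correct minimum argument $\delta$ after dilation.
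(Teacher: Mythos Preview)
Your proof is correct and takes essentially the same approach as the paper: write $\theta = n\delta \in [\pi/2,\pi]$ for an integer $n$, build $W$ as a sum of $m$ i.i.d.\ copies of $Y_{\rho,\theta}$ with $m,\rho$ tuned so that $\sigma(nW) = \sigma$, set $X = nW$, and invoke Observation~\ref{obs:mult-by-k}. The only cosmetic differences are that the paper takes $n$ to be a power of $2$ while you take $n = \lceil \pi/(2\delta)\rceil$, and you are somewhat more explicit about the choice of $m$ and the computation $\sigma(Y_{1,\theta})^2 = 1/(1-\cos\theta)$.
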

\begin{proof}
Let $(\s,\delta)$ be given. As $\bigcup_{j\geq 0} [\pi/2^{j+1},\pi/2^j] = (0,\pi]$, we may write $\delta =  \t/k$, for some $k \in \N$ and $\t \in [\pi/2,\pi]$
and note that $1 \leq \delta \s = (\t \s)/k$.
We start by constructing a random variable $Y$ with standard deviation $\s/k \geq 2^{-3}$ and $\min_{\z}|\arg(\z)| = \theta$. We then finish by applying Observation~\ref{obs:mult-by-k}.

For $\rho,m$ to be chosen later, let $Y_i$ be independent copies of $Y_{\rho,\t}$ and let 
\[ Y = \sum_{i=1}^m Y_i. 
\] Of course, $\s(Y) = m^{1/2}\s(Y_{\rho,\t})$ and thus, from the discussion that precedes Theorem~\ref{thm:Sector-is-sharp}, we may choose $m,\rho$ so that $m^{1/2}\s(Y_{\rho,\t}) = \s/k$.
Moreover, every root $\z$ of the probability generating function $f_Y = \left(f_{Y_{\rho,\t}}\right)^m$ of $Y$ has $\arg(\z) \in \{-\t,\t\}$.

Finally, set $X = k\cdot Y$. The probability generating function of $X$ is $f_X(z) = f_Y(z^k)$ and thus the roots $\z$ of $f_X$ satisfy 
\[ \arg(\z) \in \left\lbrace \pm \t/k + \frac{2\pi \ell}{k} \mod 2\pi : \ell \in \{0,\ldots,k-1\}\right\rbrace
\] and therefore $\min_{\z : f_X(\z) = 0 } | \arg(\z)| = \t/k = \delta$. 
 From Observation~\ref{obs:mult-by-k}, we have that 
\[  \sup_{t \in \R} |\PP(X^{\ast} \leq t) - \PP(Z \leq t)| \geq \frac{e^{-16} k}{\s(X)} \geq \frac{e^{-16}}{\delta \s(X)},
 \] where the last inequality follows from the fact that $k \delta = \t \in [\pi/2,\pi]$. This completes the proof. \end{proof}

The following shows that Theorem~\ref{thm:logn} is sharp. Here, we apply Observation~\ref{obs:mult-by-k} to a sum of Bernoulli random variables.
 
\begin{theorem}
For $n\geq 1$, $\delta > 0$ and $\s^2 \in [1,n^{0.9})$ satisfying $\frac{\log n}{\delta \s} \leq 1$ there exists a random variable $X \in \{0,\ldots, n\}$
with standard deviation $\s$ so that $\min_{\z : f_X(\z) = 0}|1-\z| \geq \delta $ and 
\[ \sup_{t \in \R} |\PP(X^{\ast} \leq t) - \PP(Z\leq t) | \geq \frac{c\log n}{\delta \s}. \]
\end{theorem}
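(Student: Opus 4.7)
The natural approach is the scaling trick $X = k\cdot Y$ from Observation~\ref{obs:mult-by-k}, tuned so that $f_Y$ has its roots pushed very far out on the negative real axis. Concretely, I would set $k = \lfloor c_1 \log n/\delta\rfloor$ for a small absolute constant $c_1>0$, $m = \lfloor n/k\rfloor$, and choose $p\in(0,\tfrac{1}{2}]$ so that $Y\sim\mathrm{Bin}(m,p)$ has variance exactly $(\sigma/k)^2$; then take $X = kY$. This puts $X$ in $\{0,\ldots,n\}$, forces $\sigma(X)=\sigma$, and yields $f_X(z) = ((1-p)+pz^k)^m$.

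The roots of $f_X$ are precisely the $k$-th roots of $-M$, where $M=(1-p)/p$, so the closest root to $1$ is $M^{1/k}e^{\pm i\pi/k}$. A short trigonometric calculation gives
\[ |1-M^{1/k}e^{i\pi/k}|^2 \;=\; (M^{1/k}-1)^2 + 4M^{1/k}\sin^2(\pi/(2k)) \;\geq\; (\log M/k)^2, \]
which reduces the ball condition $|\zeta-1|\geq \delta$ to the cleaner requirement $\log M \geq \delta k$. The variance identity $mp(1-p)=\sigma^2/k^2$ with $m=\lfloor n/k\rfloor$ forces $p\lesssim \sigma^2/(nk)$, and therefore $\log M \geq \log(nk/\sigma^2)$; plugging in the hypothesis $\sigma^2 < n^{0.9}$ yields $\log M \geq 0.1\log n + \log k$. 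Choosing $c_1$ small enough (say $c_1 \leq 1/20$) gives $\delta k = c_1\log n \leq \log M$, and the ball condition is satisfied.

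The discrepancy lower bound then follows immediately from Observation~\ref{obs:mult-by-k}: since $X = k\cdot Y$,
\[ \sup_t |\PP(X^\ast \leq t)-\PP(Z\leq t)| \;\geq\; \frac{c\,k}{\sigma(X)} \;\geq\; \frac{c\, c_1\log n}{\delta\sigma}. \]
The side condition $\sigma(Y)\geq 2^{-3}$ required by Observation~\ref{obs:mult-by-k} is trivial: the assumption $\log n\leq \delta\sigma$ gives $\sigma(Y) = \sigma/k = \sigma\delta/(c_1\log n) \geq 1/c_1 \gg 1/8$.

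The mechanism is that moving the single root of the base Bernoulli out to $-M\approx -n/\sigma^2$ allows $k$ of order $\log n/\delta$ rather than just $1/\delta$ (the cost of binomial-in-the-usual-sense examples); the ``$k$-th power spreads roots by angle $\approx 1/k$'' trick, which fails for moderate $M$, succeeds precisely when $M$ is large because $M^{1/k}$ is then itself far from $1$. The only fiddly part of the write-up will be bookkeeping: verifying that $k,m\geq 1$ and $p\in(0,1/2]$ can indeed be arranged simultaneously, and that the estimate $\log M\geq 0.1\log n$ genuinely exploits $\sigma^2<n^{0.9}$ (a weaker hypothesis like $\sigma^2 < n^{1-\eps}$ would still work and would correspondingly shrink the admissible constants).
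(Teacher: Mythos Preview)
Your proposal is correct and is essentially the same construction as the paper's: take $Y\sim\mathrm{Bin}(\lfloor n/k\rfloor,p)$ with $k\asymp \log n/\delta$ and $p$ small, set $X=kY$, bound the nearest root via $|1-\zeta|\ge M^{1/k}-1\ge (\log M)/k$ with $M=(1-p)/p$, and then invoke Observation~\ref{obs:mult-by-k}. The only cosmetic difference is that the paper parametrizes $p=n^{-\alpha}$ and varies $\alpha\in[0.01,1)$ to hit the target variance, whereas you solve for $p$ directly from $mp(1-p)=(\sigma/k)^2$; both lead to $\log M\gtrsim 0.1\log n$ and the same conclusion.
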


\begin{proof}
	Let $\{ Y_i\}_{i\geq 1}$ be independent and identically distributed Bernoulli random variables where $p := \PP(Y_i = 1)$ will be chosen later. 
	Of course, we have that $\Var(Y_i) = p(1-p)$ and the probability generating function of $Y_i$ is $pz + (1-p)$. We set  
	\[ Y = \sum_{i=1}^{ \lfloor n/k \rfloor} Y_i, 
	\]
	with $k := \lfloor \log n / (100 \delta) \rfloor$ and note that $\Var(Y) =  \left\lfloor\frac{n}{k}\right\rfloor p(1-p)$. We define $X := kY$ and set 
	$p = n^{-\alpha}$ with $\alpha \in [0.01,1)$ to be chosen later.
	To apply Observation~\ref{obs:mult-by-k} to $X$, we require that $\Var(Y) \geq 1/8$; and so we impose the condition 
	\[ n^{1-\alpha} \geq \log n/(100 \delta)\] to guarantee this.
	Now,
	\[ \Var(X) = k^2 \left\lfloor\frac{n}{k}\right\rfloor  p(1-p),
	\] is a continuous function of $\alpha$ as $p = n^{-\alpha}$. Also
	 \[ \Var(X) \leq knp(1-p) \leq \frac{\log n}{100\delta}n^{1-\alpha}(1-n^{-\alpha}). \] and
	 \[ \Var(X) \geq knp(1-p) - k^2p(1-p) \geq \frac{\log n}{200\delta}n^{1-\alpha}(1-n^{-\alpha}),
	 \] where the last line holds when $n \geq \log n/(200\delta) \geq k/2$, which always holds for us as $n \geq \s \geq \log n/\delta$, by hypothesis.
	 So as $\alpha \in [0.01,1)$ varies subject to $n^{1-\alpha} \geq \log n/(100 \delta)$, $\Var(X)$
	ranges over a set containing the interval $[ \delta^{-2}(\log n)^2,n^{0.9}]$ and since $\frac{\log n}{\delta \s} \leq 1$ and $\s^2 < n^{0.9}$ we may select $\alpha \in [0.01,1)$ so that $\Var(X) = \s^2$.
	
	Now note that $\deg(f_X) = k\lfloor n/k \rfloor \leq n$ and thus $X \in \{0,\ldots,n\}$. Since $f_X(z) = f_Y(z^k)$, the roots $\z$ of $f_X$ are of the form
	$\z =  \beta \left(\frac{1-p}{p}\right)^{1/k}$, where $|\beta| =1$, which allows us to bound
	\[ \min_{\z}|1-\z|\geq |1-e^{\frac{\alpha\log n}{k}}| = |1-e^{\alpha \log n /\lfloor \log n / (100 \delta) \rfloor}| \geq \delta.
	\]
	Applying Observation~\ref{obs:mult-by-k}, we see that 
	\[ \sup_{t \in \R} |\PP(X^{\ast} \leq t) - \PP(Z\leq t) | \geq \frac{ck}{ \s(X) } \geq \frac{c\log n}{100 \delta \s} \geq \frac{C \log n}{\delta \s}\,,
	\] as desired.\end{proof}

\section{General distributions} \label{sec:general-distributions}

In this brief section we discuss how to apply our results to random variables that take values in $\R$, rather than just in $\{0,\ldots,n\}$. 
In short, everything for Theorem \ref{thm:Polysector} extends rather naturally, but a few extra complications arise. 

The first task is to fix an appropriate notion of the probability generating function of $X$. Luckily, there is already a standard definition 
in this situation. First set $z^r := \exp(r\log z)$, for all $r \in \R $, where ``$\log$'' denotes the standard branch of the logarithm; then define
\[ f_X(z) := \EE_X\, z^X, 
\] for all $z \in \C \setminus \R_{\leq 0}$, to be the probability generating function of $X$.  

We now happen upon a feature of the more general set-up: $f_X$ does not necessarily \emph{exist} for all $z$ and therefore it may not make any sense to discuss the zeros of $f_X(z)$ at all. To ensure the existence of $f_X$, for all $z \not\in \R_{\leq 0}$,
it is enough to impose the condition $f_Z(\rho) < \infty$ for all $\rho >0$.  With this assumption in hand, Morera's theorem shows that $f$ is analytic as well (see Appendix \ref{app:proofs} for a proof):

\begin{lemma} \label{lem:analytic}
	Let $X \in \R$ be a random variable and let $f_X$ be its probability generating function. If $f(\rho) < \infty$ for all $\rho >0$ then
$f_X(z)$ is analytic in $\C \setminus \R_{\leq 0}$.
\end{lemma}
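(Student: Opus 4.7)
The plan is to apply Morera's theorem: a continuous complex-valued function on an open set $\Omega$ whose contour integral vanishes on every triangle contained in $\Omega$ is holomorphic on $\Omega$. I will verify the three hypotheses (well-definedness, continuity, vanishing triangle integrals) on $\Omega := \C \setminus \R_{\leq 0}$, and the unifying observation throughout is that for the standard branch of the logarithm,
\[
|z^X| = |\exp(X \log z)| = \exp(X \log|z|) = |z|^X.
\]

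First, I would note that $f_X(z) = \EE\, z^X$ is well-defined on $\Omega$: by the identity above, $\EE|z^X| = \EE|z|^X = f_X(|z|) < \infty$ by hypothesis. Next, for continuity, I would invoke the dominated convergence theorem. Given a compact set $K \subset \Omega$, set $R := \max_{z \in K}|z|$ and $r := \min_{z \in K}|z| > 0$; then for every $z \in K$ we have $|z^X| = |z|^X \leq r^X + R^X$ (splitting on the sign of $X$), and the dominating function has finite expectation $f_X(r) + f_X(R) < \infty$. Since $z \mapsto z^X$ is continuous on $\Omega$ for each fixed $X \in \R$, dominated convergence gives continuity of $f_X$ on $K$, and hence on $\Omega$.

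Finally, for any closed triangle $\gamma \subset \Omega$, I would use Fubini to swap expectation and contour integration:
\[
\oint_\gamma f_X(z)\, dz \;=\; \oint_\gamma \EE[z^X]\, dz \;=\; \EE\!\left[\oint_\gamma z^X\, dz\right] \;=\; 0,
\]
the last equality being Cauchy's theorem applied to the holomorphic function $z \mapsto z^X$ on the simply connected domain $\Omega$. To justify Fubini, the triangle $\gamma$ lies in a compact $K \subset \Omega$, so $|z^X| \leq r^X + R^X$ integrates against arclength on $\gamma$ times the law of $X$ to a finite quantity. Morera's theorem then delivers analyticity of $f_X$ on $\Omega$.

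The argument is largely bookkeeping; the only conceivable friction is the Fubini swap, but the identity $|z^X| = |z|^X$ together with the hypothesis $f_X(\rho) < \infty$ for all $\rho > 0$ makes the required integrability immediate, so no real obstacle arises.
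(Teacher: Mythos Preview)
Your proof is correct and follows essentially the same approach as the paper: both apply Morera's theorem, justify the swap of integration and expectation via Fubini using the bound $|z^X| = |z|^X \leq r^X + R^X$, and then invoke Cauchy's theorem for the holomorphic integrand $z \mapsto z^X$ on the simply connected domain $\C \setminus \R_{\leq 0}$. Your version is slightly more thorough in that you explicitly verify continuity of $f_X$ (a hypothesis of Morera) via dominated convergence, whereas the paper leaves this implicit.
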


A second subtlety concerns the asymptotic growth of the logarithmic potential $u_X(z) := \log |f(z)|$, for $|z|$ very large and very small. 
For $\kappa, \delta >0$ we say that $u$ satisfies the $(\kappa,\delta)$-\emph{growth condition} if we have
\begin{equation} \label{equ:GrowthCondition} \lim_{|z| \rightarrow \infty} \frac{|u_X(z)|}{|z|^{\kappa}} = 0  \textit{ and }  
\lim_{|z| \rightarrow \infty} \frac{|u_X(1/z)|}{|z|^{\kappa}} = 0, \end{equation} where the limits are taken with $z \in S(\delta)$.

In previous sections, we could ignore \eqref{equ:GrowthCondition}, as $u_X$ trivially satisfies the $(\kappa,\delta)$-growth condition for all $\kappa >0$
when $f_X$ is a polynomial. Here, however, we are forced to take the rate of growth into account, as it directly affects the convergence to a normal distribution.

We now state our main general theorem for zero-free sectors of probability generating functions.

\begin{theorem}\label{thm:General} For $\delta  >0$, and $\kappa >0$ let $X \in \R$ be a random variable with probability generating function $f_X$
for which $f(\rho)$ is defined for all $\rho \in \R_{\geq 0}$. If $u_X$ satisfies the $(\kappa,\delta)$-growth condition and $f_X$ has no zeros in $S(\delta)$
then \footnote{The implicit constant may be taken to be $2^{3258}$.}
\begin{equation} \label{equ:sectorCloseToNorm-general} \sup_{t \in \R}| \PP(X^{\ast} \leq t) - \PP(Z \leq t) | = O\left( \frac{\max\{\delta^{-1},\kappa\}}{\sigma}\right), \end{equation}
where $Z \sim N(0,1)$. \end{theorem}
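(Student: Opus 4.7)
The plan is to mimic the two-step proof of Theorem~\ref{thm:Polysector} and the only genuine novelty is handling the weaker growth hypothesis on $u_X$. First I would verify that the hypotheses on $u_X$ from Section~\ref{sec:defs} still hold: by Lemma~\ref{lem:analytic}, $f_X$ is analytic on $\C\setminus\R_{\leq 0}$, and because $f_X$ has no zeros on the closed sector $S(\delta)$, $u_X=\log|f_X|$ is harmonic on an open neighborhood of $S(\delta)$. Symmetry $f_X(\bar z)=\overline{f_X(z)}$ follows from $\overline{z^r}=\bar z^{\,r}$ on $\C\setminus\R_{\leq 0}$, and weak positivity is $|f_X(z)|=|\EE z^X|\leq \EE|z|^X=f_X(|z|)$. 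Analyticity of $f_X$ near $1$ also ensures that the normalized cumulant sequence $(a_j)$ and the expansion of Lemma~\ref{lem:USeries} are valid.

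Next, for the $b$-decreasing step, the key observation is that in the polynomial case one could afford $R\to\infty$ in the full sector $S(\delta)$ because $u$ grows only logarithmically; here we can only do so after restricting to a narrower sub-sector in which the exponential decay $R^{-1/\delta'}$ beats the polynomial growth $R^{\kappa}$. Concretely, I set $\delta':=\min\{\delta/2,\,1/(2\kappa)\}$ so that $1/\delta'\geq 2\kappa>\kappa$. The $(\kappa,\delta)$-growth hypothesis yields $\max_{z\in S_R^{\ast}(\delta')}|u_X(z)|=o(R^{\kappa})$ as $R\to\infty$, hence
\[ \left(\tfrac{r}{R}\right)^{1/\delta'}\max_{z\in S_R^{\ast}(\delta')}|u_X(z)| \;=\; o\!\left(R^{\kappa-1/\delta'}\right)\;\to\;0 \]
for any fixed $r$. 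So the condition \eqref{equ:bcondition} of Lemma~\ref{lem:b-decreasing} holds for every $b>0$ once $R$ is large enough, giving that $u_X$ is $b$-decreasing on $S_r(\delta'/2)$ for any $b>0$ and any $r$; taking $b\to 0$ shows $u_X$ is $0$-decreasing on $S(\delta'/2)\setminus\{0\}$, and in particular on $B(1,\delta'/4)$ (which is contained in $S(\delta'/2)$ since $\delta'/4<1/2$).

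Finally, I would apply Corollary~\ref{cor:CLT} with $\eps=\delta'/4$ and $b=0$. Assuming $\sigma>0$ (otherwise the bound is vacuous), $|a_2|=\sigma^2/2>0$ makes the hypothesis $\sum_{j\geq 2}|a_j|(\eps/32)^j>0$ automatic, and the conclusion gives
\[ \sup_{t\in\R}\bigl|\PP(X^{\ast}\leq t)-\PP(Z\leq t)\bigr|=O(1/(\eps\sigma))=O(1/(\delta'\sigma))=O\bigl(\max\{\delta^{-1},\kappa\}/\sigma\bigr), \]
as required. The main obstacle is \emph{not} the growth argument itself, which is routine once $\delta'$ is chosen, but rather verifying that Corollary~\ref{cor:CLT} (and behind it Lemma~\ref{thm:maintechnical}) genuinely extends from $X\in\{0,\ldots,n\}$ to real-valued $X$ with $f_X(\rho)<\infty$ for all $\rho>0$. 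Inspecting those proofs, the only uses of the integer support are to guarantee (i) existence of the cumulant expansion of $\log\psi_X$ in a neighborhood of $0$, and (ii) analyticity of $\psi_X$ there; both survive in our setting because $f_X$ is analytic on $\C\setminus\R_{\leq 0}$ by Lemma~\ref{lem:analytic} and hence in a neighborhood of $1$. Lemma~\ref{lem:fourier-inversion} makes no integrality hypothesis at all. Consequently, the machinery transfers essentially verbatim and the only truly new input is the restriction to $\delta'<1/\kappa$ described above.
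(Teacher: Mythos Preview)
Your proposal is correct and follows essentially the same approach as the paper: choose $\delta'=\min\{\delta/2,1/(2\kappa)\}$ so that the exponential decay $R^{-1/\delta'}$ dominates the growth $R^{\kappa}$, apply Lemma~\ref{lem:b-decreasing} with $R\to\infty$ to obtain $0$-decreasing on $B(1,\delta'/4)$, and finish with Corollary~\ref{cor:CLT}. Your explicit flagging of the need to check that Corollary~\ref{cor:CLT} extends to real-valued $X$ is in fact more careful than the paper, which simply invokes it without comment.
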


Again, this theorem is sharp with respect to the dependence on $\kappa,\delta$ and $\s$, as we shall see in Subsection~\ref{subsec:gen-sharp}.

\subsection{Proof of Theorem~\ref{thm:General}}

We first notice that many of the properties of the logarithmic potential of $X$ easily carry over to this more general setting. Indeed,
if $f_X$ is zero-free in the sector $S(\delta)$ then $u(z)$ is harmonic in this sector. Also $u(z)$ is symmetric, and weakly positive. 
We also have that $f_X(1) = 1$ and therefore $u_X(1) = 0$. With these observations at hand, we may prove Theorem~\ref{thm:General}
as we proved Theorem~\ref{thm:Polysector}

\vspace{4mm}

\noindent \emph{Proof of Theorem~\ref{thm:General}.} Let $X \in \R$ be a random variable with probability generating function $f_X$,
satisfying $f_X(\rho) < \infty$, for all $\rho >0$; that is zero-free in the sector $S(\delta)$; and so that $u_X$ satisfies the $(\kappa,\delta)$-growth condition. 
By the discussion above, we know that the logarithmic potential $u = u_X$ is harmonic, symmetric and weakly-positive in $S(\delta)$.
Also note that we may assume that $\s >0$, otherwise the statement of the Theorem~\ref{thm:General} is meaningless.

We now choose $\eps = \min\{ \delta/2,\frac{1}{2\kappa} \}$ and note that $u$ is a weakly-positive, symmetric and harmonic function on the smaller region $S(\eps)$.
Now, looking to apply Lemma~\ref{lem:b-decreasing}, we set $r := 2$ and note that 
\begin{align}\label{eq:b-decreasing-check-gen}
\left(\frac{2}{R} \right)^{1/\eps}\max_{z \in S_R^{\ast}(\eps) } |u(z)| = O\left(R^{-1/\eps + \kappa} \right) = O(R^{-\kappa}) \to 0 \end{align} as $R \rightarrow \infty$. 
Thus, we may apply Lemma~\ref{lem:b-decreasing} to learn that $u$ is decreasing in $S(\eps/2)$.  Lemma \ref{lem:planar-geometry3} then implies that $u$ is decreasing in $B(1,\eps/4)$. Since $\s>0$, $u$ satisfies the conditions of Corollary~\ref{cor:CLT}, which finishes the proof. \qed

\subsection{Proof of the sharpness of Theorem~\ref{thm:General}} \label{subsec:gen-sharp}

\begin{theorem} \label{thm:growth-is-sharp}
	Let $\kappa$, $\delta \in (0,\pi)$ and $\sigma$ be so that $\sigma \cdot \min\{\delta,1/\kappa\} \geq 1$.  Then there exists a random variable $X \in \Z$ with standard deviation $\s$ so that $u_X$ is harmonic in $S(\delta)$, $u_X$ satisfies the $(\kappa,\delta)$-growth condition, and 
\begin{equation}\label{eq:growth-sharp-disc} \sup_{t \in \R} |\PP(X^\ast \leq t) - \PP(Z \leq t) | \geq \frac{c}{\s}\cdot \max\{\kappa,\delta^{-1}\}\,.\end{equation}
\end{theorem}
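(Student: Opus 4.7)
The plan is to mirror the strategy of Theorem~\ref{thm:Sector-is-sharp}: I would apply Observation~\ref{obs:mult-by-k} to a carefully chosen seed random variable, with the scaling factor $k$ tuned to hit the target discrepancy. I would split into two regimes based on which term of $\max\{\kappa, 1/\delta\}$ dominates.

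In the first regime, $1/\delta \geq \kappa$, the target lower bound is $c/(\delta\sigma)$, and the hypothesis $\sigma\min\{\delta,1/\kappa\}\geq 1$ becomes $\delta\sigma \geq 1$, so one may directly invoke Theorem~\ref{thm:Sector-is-sharp}. The resulting $X$ is supported on finitely many non-negative integers, hence $f_X$ is a polynomial, $u_X(z) = O(\log|z|)$, and the $(\kappa,\delta)$-growth condition is satisfied trivially for every $\kappa>0$.

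In the second regime, $\kappa > 1/\delta$, the target is $c\kappa/\sigma$. When $\kappa \leq 1$ this target is at most $c/\sigma$, and the hypothesis forces $\sigma \geq \kappa > 1/\delta > 1/\pi > 2^{-3}$, so Observation~\ref{obs:discrete-discrep} already delivers discrepancy at least $e^{-16}/\sigma$ for any $\mathbb{Z}$-valued $X$ of standard deviation $\sigma$; in this subcase it suffices to exhibit any polynomial-pgf example with the right standard deviation and no zeros in $S(\delta)$, for instance a sum of iid Bernoulli$(p)$ variables whose pgf $(pz+(1-p))^n$ has its only root on the negative real axis, outside $S(\delta)$. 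For the remaining range $\kappa > 1$, I would take $X = kY$ with $Y \sim \mathrm{Poisson}(\lambda)$, choosing $k$ to be the largest positive integer strictly less than $\kappa$ (so $\kappa/2 \leq k < \kappa$) and $\lambda = (\sigma/k)^2$. One then checks that $\sigma(X) = k\sqrt{\lambda} = \sigma$; that the pgf $f_X(z) = \exp(\lambda(z^k-1))$ never vanishes on $\mathbb{C}$, so $u_X$ is harmonic everywhere; that $|u_X(z)| = \lambda|\Re(z^k)-1| \leq \lambda(|z|^k+1) = o(|z|^\kappa)$ as $|z|\to\infty$ since $k < \kappa$, while $u_X(1/z) \to -\lambda$, confirming $(\kappa,\delta)$-growth; and, since $\sqrt{\lambda} = \sigma/k \geq 1 > 2^{-3}$ (using $\sigma \geq \kappa > k$), Observation~\ref{obs:mult-by-k} gives a discrepancy of at least $ck/\sigma \geq c\kappa/(2\sigma)$.

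The main delicacy will be the tension between two competing requirements on $k$: one needs $k<\kappa$ strictly in order that the Poisson-type entire pgf satisfies the $(\kappa,\delta)$-growth condition, yet one also needs $k$ comparable to $\kappa$ in order to extract the factor of $\kappa$ in the discrepancy from Observation~\ref{obs:mult-by-k}. Picking $k$ to be the largest positive integer strictly less than $\kappa$ threads this needle whenever $\kappa > 1$, and the remaining regime $\kappa \leq 1$ has to be treated separately, with the discrepancy lower bound coming for free from Observation~\ref{obs:discrete-discrep}.
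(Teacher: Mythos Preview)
Your proposal is correct and follows the same two-case split as the paper: quote Theorem~\ref{thm:Sector-is-sharp} when $1/\delta \geq \kappa$, and use a scaled Poisson when $\kappa > 1/\delta$. The only real difference is in the second case: the paper simply sets $X = (\kappa/2)\cdot Y$ with $Y$ Poisson of mean $4\sigma^2/\kappa^2$ and invokes Observation~\ref{obs:mult-by-k} with the (generally non-integer) scale $k=\kappa/2$, which sidesteps your subcase split entirely. Your route---picking the largest integer $k<\kappa$ and treating $\kappa\le 1$ separately with Bernoulli sums---is a little more work but has the virtue of actually producing $X\in\Z$ as the statement asks, whereas the paper's $X=(\kappa/2)Y$ does not.
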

\begin{proof}
	If $\delta \leq 1/\k$, we apply Theorem \ref{thm:Sector-is-sharp} to obtain a random variable $X \in \Z_{\geq 0}$ which has finite support
	and satisfies \eqref{eq:growth-sharp-disc}. Here, $f_X$ is a polynomial so $\log|f_X| = O(\log |z|) = O(|z|^{\kappa})$.  
	
	In the case of $\delta > 1/\k$, let $Y$ be the Poisson random variable with mean $4\s^2 / \kappa^2$.  Then $Y \in \Z$ with $\s(Y) = 2\s/\kappa$.  Set $X = (\kappa/2) \cdot Y$  and note $\sigma(X) = \s$ and $$u_X(z) = \frac{4\s^2}{\kappa^2}\left( z^{\kappa/2} - 1\right)\,, $$ which is harmonic in $S(\delta)$ and satisfies the specified growth conditions.  Applying Observation \ref{obs:mult-by-k} completes the proof. 
\end{proof}

\section{Acknowledgments}

The authors would like to thank Robin Pemantle for introducing us to these problems and Robert Morris for comments on a draft. The second-named author is especially grateful to Robin Pemantle and the mathematics department at the University of Pennsylvania for hosting him on numerous occasions while this research was being carried out.

\bibliographystyle{abbrv}
\bibliography{CLT2-bib}

\appendix

\section{A few displaced proofs} \label{app:proofs}

\begin{proof}[Proof of Lemma~\ref{lem:harnack-general}]
	This follows from iterating \eqref{eq:harnack-ball}: we can find $k = \lfloor d / (\eps / 2) \rfloor$ points $y_1,\ldots, y_k$ on the line segment from $z_1$ to $z_2$ so that $d(z_1,y_1) \leq \eps/2$, $d(y_j,y_{j+1}) \leq \eps/2$ and $d(y_k,z_2) \leq \eps/2$.  Applying \eqref{eq:harnack-ball} $k + 1$ times completes the proof.
\end{proof}

\begin{proof}[Proof of Lemma \ref{lem:ratioOfPoisson}]
	Since rescaling $\eps$ only rescales the corresponding Poisson densities, we may assume without loss of generality that $\eps = 1$; similarly, we may recenter both balls to the origin.  Fix $w \in \partial B(0,2)$ and $z \in B(0,1)$.  By \cite[eq. 1.15]{harmonic-axler} we have that $$P_z(w) = \frac{1}{4\pi}\frac{4 - |z|^2}{|z - w|^2}\,.$$
	
	Thus $$\frac{P_z(w)}{P_1(w)} = \frac{4-|z|^2}{|z - w|^2} \leq \frac{4 - |z|^2}{(2 - |z|)^2} = \frac{2+|z|}{2 - |z|} \leq 3\,.$$ 
	
\end{proof}

\begin{proof}[Proof of Fact~\ref{fact:negcos}]
	For (\ref{equ:Factdiff-ve}), we consider $\t = (2\pi k)/j$, where $k$ is chosen so that so that
	$| (2\pi k)/j - \pi/2 | \leq \pi /j $. Hence
	\[  ( \cos \t )^j - \cos j\t < (\pi/j)^{j} - 1 ,\] 
	which is less than $-1/2$ for $j \geq 4$. For $j=3$ choose $\t = 4 \pi /3 $ so that $(\cos \t)^3 <0$ and $\cos(3 \t) = 1$. 
	To see (\ref{equ:Factdiff+ve}), we evaluate at $\t = \pi/j$ to see
	\[  ( \cos \t )^j - \cos j\t >  1, \] provided $j \geq 3$. 
\end{proof}

\vspace{4mm}

The following is a theorem of Cuesta-Albertos, Fraiman and Ransford from \cite{Cuesta-Albertos}, which we shall use to derive Corollary~\ref{cor:CAFR}.

\begin{theorem}[\hspace{1sp}\cite{Cuesta-Albertos}] \label{thm:CAFR} Let $\nu$ be a Borel probability measure on $\R^d$ for which the absolute moments 
\[ M_k := \int_{\R^d} \|x\|_2^k \, d\nu,  
\] are finite and satisfy the Carleman condition $\sum_{k\geq 1} M_k^{-1/k} = \infty$. If $\tilde{\nu}$ is a Borel probability measure on $\R^d$ for which
$\Pi(\nu,\tilde{\nu})$ is not contained in the zero-set of a non-trivial polynomial then $\nu=\tilde{\nu}$.
\end{theorem}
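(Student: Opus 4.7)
The plan is to reduce the statement to the classical (univariate) moment problem by combining the hypothesis that $\Pi(\nu,\tilde\nu)$ is not contained in the zero-set of any polynomial (which is a Zariski-density condition in $\R^d$) with the Carleman moment-determinacy guaranteed for $\nu$ by the hypothesis on $(M_k)$.

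The first step is to observe that for every $v\in\Pi(\nu,\tilde\nu)$ the one-dimensional marginals $\nu_v$ and $\tilde\nu_v$ coincide, so all integer moments of $\langle v,x\rangle$ agree under $\nu$ and $\tilde\nu$. For each fixed $k\geq 1$, the map
\[
v \mapsto Q_k^{\nu}(v) := \int_{\R^d}\langle v,x\rangle^k\,d\nu(x)
\]
is a homogeneous polynomial of degree $k$ in $v=(v_1,\ldots,v_d)$, with coefficients the joint moments $\int x^{\alpha}\,d\nu$ (these are finite by Cauchy--Schwarz from $M_{2k}<\infty$).

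The second step is to upgrade $\tilde\nu$ to also having finite moments of all orders, so that the analogous polynomial $Q_k^{\tilde\nu}$ makes sense. Since $\Pi(\nu,\tilde\nu)$ is not contained in any proper algebraic subvariety, it contains a basis $v_1,\ldots,v_d$ of $\R^d$. For each $i$, the equality $\nu_{v_i}=\tilde\nu_{v_i}$ together with $M_{2k}<\infty$ gives $\int \langle v_i,x\rangle^{2k}\,d\tilde\nu<\infty$, and then the elementary norm comparison $\|x\|^{2k}\leq C_k\sum_i \langle v_i,x\rangle^{2k}$ (which holds because $v_1,\ldots,v_d$ span $\R^d$) yields $\int\|x\|^{2k}\,d\tilde\nu<\infty$. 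Consequently $Q_k^{\tilde\nu}(v):=\int\langle v,x\rangle^k\,d\tilde\nu(x)$ is also a polynomial in $v$ of degree $k$, and it agrees with $Q_k^\nu$ on $\Pi(\nu,\tilde\nu)$. Since two polynomials on $\R^d$ that agree on a set not contained in any proper algebraic subvariety must be identical, $Q_k^\nu \equiv Q_k^{\tilde\nu}$ as polynomials. Comparing coefficients, all joint moments of $\nu$ and $\tilde\nu$ coincide.

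The third and final step is to invoke the multivariate moment-determinacy of $\nu$: under the Carleman condition $\sum_k M_k^{-1/k}=\infty$, a Borel probability measure on $\R^d$ is uniquely determined by its joint moments. One proves this by fixing a direction $\xi\in\R^d$, noting that the characteristic function satisfies $\hat\nu(t\xi)=\hat{\nu_\xi}(t)$, and observing that the univariate measure $\nu_\xi$ has absolute moments bounded by $\|\xi\|^k M_k$, so that $\sum_k (\|\xi\|^k M_k)^{-1/k}=\infty$ and classical univariate Carleman forces $\nu_\xi=\tilde\nu_\xi$. Hence $\hat\nu(\xi)=\hat{\tilde\nu}(\xi)$ for every $\xi$, and $\nu=\tilde\nu$ by Fourier uniqueness.

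The main obstacle I expect is the rigorous use of the algebro-geometric hypothesis ``not contained in the zero set of a non-trivial polynomial'' in step two: one must be careful that the coefficient extraction works simultaneously for all degrees $k$, and that the chosen basis $v_1,\ldots,v_d$ can be extracted from $\Pi$ (this is where the full strength of the Zariski-density hypothesis, rather than merely non-emptiness, is used). The univariate Carleman theorem in step three is classical, so the genuinely new content is the reduction from the multivariate problem to moment equality via the polynomial identity.
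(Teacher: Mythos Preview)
The paper does not prove Theorem~\ref{thm:CAFR}; it is quoted from \cite{Cuesta-Albertos} as an external result and used only to derive Corollary~\ref{cor:CAFR}. There is therefore no ``paper's own proof'' to compare against.

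That said, your sketch is essentially the standard argument and is correct. A couple of minor remarks: in step two, the existence of a basis inside $\Pi(\nu,\tilde\nu)$ follows because $\Pi$ is not contained in any hyperplane (a hyperplane being the zero set of a linear polynomial), so $\Pi$ spans $\R^d$; and the norm inequality $\|x\|^{2k}\le C_k\sum_i\langle v_i,x\rangle^{2k}$ is best justified via $\|x\|\le C\max_i|\langle v_i,x\rangle|$ (equivalence of norms) rather than directly. In step three you correctly reduce to the univariate Carleman criterion and then to Cram\'er--Wold; this is precisely the route taken in \cite{Cuesta-Albertos}.
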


\begin{proof}[Proof of Corollary~\ref{cor:CAFR}]
	For each $d\times d$ positive-semi definite matrix $A$, the moment generating function $\int_{\R^d} e^{t\|x\|_2} d\nu_A$ is finite for all $t$.  This implies that that the Carleman condition $\sum_{k \geq 1} M_k^{-1/k} = \infty$ is satisfied; indeed,
	\[  \int_{\R^d} e^{t\|x\|_2} d\nu_A = \sum_{k\geq 0} \frac{t^k}{k!}M_k ,
	\] and so $\limsup_{k} (M_k /k!)^{1/k} < C <\infty$. Thus for large enough $k$ we have 
	\[ M_k^{-1/k} > 1/(2Ck!^{1/k}) > 1/(2Ck),
	\] which implies the Carleman condition.
	
	Finally note that $\Z_{\geq 0}^d$ is not contained in the zero-set of a non-trivial polynomial, showing that the hypotheses of Theorem \ref{thm:CAFR} are met. 
\end{proof}

\begin{proof}[Proof of Fact \ref{fact:tight}]  Tightness follows from Chebyshev's inequality:
	
	$$\PP( \|X^\ast_{n} \|_2 \geq x ) = \PP( \|X_{n} - \mu_n\|_2 \geq x \sigma_n) \leq \frac{\E \|X_n - \mu_n\|_2^2}{(x \s_n^2)} = \frac{\mathrm{Tr}(A_n)}{x^2 \s_n^2} \leq \frac{d}{x^2}$$
	due to the fact that $\s_n^2$ is the operator norm of $A_n$.
\end{proof}

\begin{proof}[Proof of Lemma \ref{lem:analytic}]
	To prove analyticity of $f_X(z)$, we use Morera's theorem, i.e.~show that $\int_\gamma f_X(z)\,dz = 0$ for each closed $C^1$ curve $\gamma$ in $\C \setminus \R_{\leq 0}$.  Provided we can swap integrals, we compute	
	$$ \int_\gamma f_X(z) \,dz = \int_\gamma \E [z^X] \,dz = \E\int_{\gamma} z^X \, dz  = 0 $$
	where the last equality is because $z^x$ is analytic in $\C\setminus \R_{\leq 0}$ for each value of $x$.  To see that we can swap integrals, find values $0 < r < R < \infty$ so that $|z| \in [r,R]$ for all $z \in \gamma$.  Then $$ \int_\gamma |\E[z^X]| \,dz \leq \int_\gamma \E[ r^X + R^X] \,dz \leq |\gamma| \cdot (\E[ r^X] + \E[R^X]) < \infty $$
	where $|\gamma|$ is the length of $\gamma$.  Fubini's theorem then justifies the swapping of integrals.
\end{proof}

\section{Planar geometry calculations}\label{app:kernel}

Here we prove a handful of straightforward planar geometry calculations that have allowed us to nest truncated sectors inside balls and vice-versa:

\begin{lemma} \label{lem:planar-geometry1}
	Let $\eps \leq \frac{1}{2}$.  Then each point $z \in B(1,\eps)$ satisfies $|z| \in [1-\eps,1+\eps]$ and $|\arg(z)| \leq 2\eps$.  
\end{lemma}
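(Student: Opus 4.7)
The statement has two parts, and I would prove them separately using only elementary triangle-inequality style reasoning.

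For the first part, $|z| \in [1-\eps, 1+\eps]$, I would simply apply the triangle inequality to $z = 1 + (z-1)$: we get $|z| \leq 1 + |z-1| \leq 1 + \eps$ from one direction and $|z| \geq 1 - |z-1| \geq 1 - \eps$ from the reverse triangle inequality. This part is immediate and uses only $|z-1| \leq \eps$, not the bound $\eps \leq 1/2$.

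For the argument bound, write $z = x + iy$ with $x,y \in \R$. Since $|z-1| \leq \eps$ we have $|y| \leq \eps$ and $x \geq 1 - \eps \geq 1/2 > 0$, the latter using $\eps \leq 1/2$. Because $\re z > 0$, the principal argument $\arg z$ lies in $(-\pi/2, \pi/2)$, so I can compute $\tan(\arg z) = y/x$ and obtain
\[
|\tan(\arg z)| = \frac{|y|}{x} \leq \frac{\eps}{1-\eps} \leq 2\eps,
\]
where the last inequality uses $\eps \leq 1/2$. Finally, the elementary inequality $|\tan \theta| \geq |\theta|$ on $(-\pi/2, \pi/2)$ yields $|\arg z| \leq |\tan(\arg z)| \leq 2\eps$, as required.

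There is no real obstacle here; the only subtlety is ensuring $\re z > 0$ so that we may use the principal branch of the argument and the bound $|\tan\theta|\geq |\theta|$, but this is guaranteed by the hypothesis $\eps \leq 1/2$, which forces $x \geq 1/2$.
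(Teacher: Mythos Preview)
Your proof is correct. The modulus bound is identical to the paper's. For the argument bound you take a different but equally elementary route: you bound $|\tan(\arg z)| = |y|/x \leq \eps/(1-\eps) \leq 2\eps$ and then use $|\theta| \leq |\tan\theta|$, whereas the paper shows that the ray $\{\arg(\zeta) = 2\eps\}$ lies at distance $\sin(2\eps) \geq \eps$ from $1$ via a direct minimization. Both arguments are one-line computations; yours is arguably cleaner, while the paper's gives the exact distance $\sin(2\eps)$ from $1$ to the ray, which is a slightly sharper piece of information (though not needed here).
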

\begin{proof}
	We note that $|z| \leq 1 + |1 - z| \leq 1 + \eps$ and $|z| \geq 1 - |1 - z| \geq 1 - \eps$, thereby showing the modulus bound.  For the argument bound, it is sufficient to show that the line $\{\arg(\zeta) = 2\eps\}$ does not intersect $B(1,\eps)$.  We note that the distance squared from a point $t e^{i2\eps}$ to $1$ is $$(t \cos(2\eps) - 1)^2 + (t\sin(2\eps))^2 = t^2 - 2t \cos(2\eps) +1\,. $$
	
	This quadratic achieves its minimum at $t = \cos(2\eps)$, implying that the minimum distance is $\sin(2\eps)$.  Using the inequality $\sin(\theta) \geq \theta - \theta^3 / 3$ for $\theta \geq 0$ together with the inequality $2\eps - 8 \eps^3 / 3 \geq \eps$ for $\eps \leq \frac{1}{2}$ shows that this distance is at least $ \eps$ for $\eps \leq \frac{1}{2}$.  
\end{proof}

\begin{corollary} \label{lem:planar-geometry3}
	Let $\eps \leq 1$.  Then each point $z \in B(1,\eps/2)$ satisfies $|z| \in [\frac{1}{1+\eps},1+\eps]$ and $|\arg(z)| \leq \eps$.  
\end{corollary}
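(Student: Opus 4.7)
The plan is to derive this corollary directly from Lemma~\ref{lem:planar-geometry1}. Since we assume $\eps \le 1$, we have $\eps/2 \le 1/2$, so we may apply Lemma~\ref{lem:planar-geometry1} with $\eps$ replaced by $\eps/2$. This immediately gives that any $z \in B(1,\eps/2)$ satisfies $|z| \in [1-\eps/2, 1+\eps/2]$ and $|\arg(z)| \le 2(\eps/2) = \eps$, which handles the argument bound without further work.

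For the modulus, the upper bound is trivial since $1+\eps/2 \le 1+\eps$. The only thing to verify is the lower bound $1-\eps/2 \ge \tfrac{1}{1+\eps}$. I would clear denominators and check the equivalent inequality $(1-\eps/2)(1+\eps) \ge 1$, which expands to $\eps(1-\eps)/2 \ge 0$; this holds on $[0,1]$. So the lower bound follows from our hypothesis $\eps \le 1$.

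There is no real obstacle here: the corollary is an immediate consequence of the preceding planar geometry lemma combined with an elementary inequality comparing $1-\eps/2$ and $1/(1+\eps)$. The slight asymmetry between the lower bound $1/(1+\eps)$ stated in the corollary and the natural bound $1-\eps/2$ coming from the lemma is exactly captured by the identity $(1-\eps/2)(1+\eps) = 1 + \eps(1-\eps)/2$, and the stated form is presumably chosen for convenience in later applications (to match the bounds that appear on $|z|^{-1}$ in the main text).
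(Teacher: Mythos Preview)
Your proof is correct and matches the paper's approach essentially verbatim: both derive the argument bound and the upper modulus bound from Lemma~\ref{lem:planar-geometry1} applied with parameter $\eps/2$, and both finish the lower modulus bound by checking the elementary inequality $1-\eps/2 \ge \tfrac{1}{1+\eps}$ for $\eps \le 1$.
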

\begin{proof}
	The bounds on argument as well as the upper bound on modulus follow from Lemma \ref{lem:planar-geometry1}; for the lower bound on modulus, note that the modulus is minimized for $z = 1 - \eps/2$, which is indeed larger than $\frac{1}{1 + \eps}$ provided $\eps \leq 1$.
\end{proof}

\begin{lemma} \label{lem:planar-geometry2}
	Fix $\eps \leq 1$.  Then for each point $z$ with $|z| \in [1-\eps,1+\eps]$ and $|\arg(z)| \leq \eps$, we have $|z - 1| \leq 2\eps$.
\end{lemma}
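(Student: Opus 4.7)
The plan is to write $z = re^{i\theta}$ with $r = |z| \in [1-\eps, 1+\eps]$ and $|\theta| = |\arg(z)| \le \eps$, and then expand $|z-1|^2$ using the law of cosines in the plane. Explicitly,
\[ |z-1|^2 = (r\cos\theta - 1)^2 + (r\sin\theta)^2 = (r-1)^2 + 2r(1-\cos\theta). \]
The first term is bounded by $\eps^2$ by the hypothesis on $r$. For the second, I would use the elementary inequality $1 - \cos\theta \le \theta^2/2 \le \eps^2/2$ together with the bound $r \le 1+\eps \le 2$ (here is where $\eps \le 1$ enters), giving $2r(1-\cos\theta) \le 2\eps^2$. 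Adding these yields $|z-1|^2 \le 3\eps^2 \le 4\eps^2$, and taking square roots gives the claim.

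There is no real obstacle here; the only thing to be slightly careful about is that one does not get a worse constant by applying the triangle inequality naively (e.g.\ $|z-1| \le |z-r| + |r-1| \le (1+\eps)\eps + \eps$ only yields the weaker bound $(2+\eps)\eps$, which is fine but less clean). Working with the squared modulus avoids this and exploits the fact that the radial error and the angular error are orthogonal contributions.
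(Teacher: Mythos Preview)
Your proof is correct and follows essentially the same idea as the paper: expand $|z-1|^2 = r^2 - 2r\cos\theta + 1$ and use the bound $1-\cos\theta \le \theta^2/2$. The only cosmetic difference is that the paper first reduces to the corner points $(1\pm\eps)e^{i\eps}$ and evaluates there, whereas your decomposition $|z-1|^2 = (r-1)^2 + 2r(1-\cos\theta)$ handles all $r,\theta$ at once and so is marginally cleaner (it sidesteps the need to justify that the maximum occurs at a corner).
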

\begin{proof}
	Note first that the point in the region $\{z : |z| \in [1 - \eps,1 + \eps], |\arg(z)| \leq \eps  \}$ of maximum distance to $1$ must be one of $(1+\eps) e^{i\eps}$, $(1 - \eps)e^{i\eps}$ or one of their conjugates.  Computing the distance square of the former to $1$, we see \begin{align*}
	|(1+\eps) e^{i\eps} - 1|^2 =  \eps^2 + 2 \eps(1 - \cos(\eps)) + 2(1 - \cos(\eps)) \leq \eps^2 (2 + \eps) \leq 4 \eps^2\,.
	\end{align*}
	
	Similarly, $$|(1 - \eps) e^{i \eps} - 1|^2 = \eps^2 - 2\eps(1- \cos(\eps)) + 2(1 - \cos(\eps)) \leq 2 \eps^2 \leq 4 \eps^2\,.$$
\end{proof}

\begin{lemma} \label{lem:planar-geometry4}
	For each $\eps < 1/2$,  $\{ e^w : w \in B(0,\eps) \} \subset B(1,2\eps)$.
\end{lemma}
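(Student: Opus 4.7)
The plan is to bound $|e^w - 1|$ directly via the power series expansion, using only $|w| < \eps < 1/2$.

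For $w \in B(0,\eps)$, I would write $e^w - 1 = \sum_{k \geq 1} w^k / k!$ and apply the triangle inequality to get
\[ |e^w - 1| \;\leq\; \sum_{k \geq 1} \frac{|w|^k}{k!} \;=\; e^{|w|} - 1 \;<\; e^{\eps} - 1. \]
So it suffices to show $e^{\eps} - 1 \leq 2\eps$ for $\eps \in (0, 1/2)$. One clean way: factor out $\eps$ from the series to obtain
\[ e^{\eps} - 1 \;=\; \eps \sum_{k \geq 0} \frac{\eps^k}{(k+1)!} \;\leq\; \eps \sum_{k \geq 0} \frac{(1/2)^k}{(k+1)!} \;=\; 2\eps\bigl(e^{1/2} - 1\bigr), \]
and since $e^{1/2} < 2$, the right-hand side is strictly less than $2\eps$. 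Combining gives $|e^w - 1| < 2\eps$, i.e.\ $e^w \in B(1, 2\eps)$.

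There is essentially no obstacle here; the only thing to watch is choosing a bound for $e^{\eps} - 1$ that is sharp enough on $(0, 1/2)$ to beat $2\eps$. The estimate $e^{1/2} - 1 < 1$ (equivalently $e < 4$) is more than sufficient and keeps the calculation elementary.
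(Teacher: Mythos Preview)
Your proof is correct and takes a genuinely different route from the paper. The paper writes $w = a + ib$, computes $|e^w - 1|^2 = e^{2a} - 2e^a\cos b + 1$ explicitly, applies the estimate $\cos b \geq 1 - b^2/2$ to get $(e^a - 1)^2 + e^a b^2$, and then uses $(e^a - 1)^2 \leq 4a^2$ and $e^a < 4$ for $a < 1$ to conclude. You instead reduce to the real case via the triangle inequality on the power series, obtaining $|e^w - 1| \leq e^{|w|} - 1$, and then bound $e^{\eps} - 1 \leq 2\eps(e^{1/2} - 1) < 2\eps$ directly from the series. Your argument is shorter and avoids the real/imaginary split and the cosine estimate; the paper's approach has the minor advantage of making the dependence on $a$ and $b$ separately visible, but for the purpose of this lemma that buys nothing extra.
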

\begin{proof}
	For $w \in B(0,\eps)$, write $w = a + ib$.  Then $$|e^w - 1|^2 = e^{2a} - 2e^a \cos(b) + 1 \leq e^{2a} - 2e^a(1 - b^2/2) + 1 = (e^a - 1)^2 + e^a b^2\,.$$
	
	Using the inequalities $(e^a - 1)^2 \leq 4 a^2$ and $e^a < 4$ for $a < 1$ yields $$|e^w - 1|^2 \leq 4 a^2 + 4 b^2 = 4 \eps^2\,.$$ 
\end{proof}

\end{document}